\DeclarePairedDelimiter\floor{\lfloor}{\rfloor}
\newcommand\pgfmathsinandcos[3]{%
  \pgfmathsetmacro#1{sin(#3)}%
  \pgfmathsetmacro#2{cos(#3)}%
}
\newcommand\LongitudePlane[3][current plane]{%
  \pgfmathsinandcos\sinEl\cosEl{#2} 
  \pgfmathsinandcos\sint\cost{#3} 
  \tikzset{#1/.estyle={cm={\cost,\sint*\sinEl,0,\cosEl,(0,0)}}}
}
\newcommand\DrawLongitudeCircle[2][1]{
  \LongitudePlane{\angEl}{#2}
  \tikzset{current plane/.prefix style={scale=#1}}
  \pgfmathsetmacro\angVis{atan(sin(#2)*cos(\angEl)/sin(\angEl))} %
  \draw[current plane] (\angVis:1) arc (\angVis:\angVis+180:1);
  \draw[current plane,dashed] (\angVis-180:1) arc (\angVis-180:\angVis:1);
}
\newcommand{\mycases}[1]{\left\{\begin{array}{ll}#1\end{array}\right.}
\newcommand{\Z}{{\bf  Z}}
\newcommand{\MU}{MU}
\newcommand{\smashove}[1]{\underset{#1}{\wedge}}
\newcommand{\Ind}{\big\uparrow} 
\newcommand{\Res}{\big\downarrow} 
\DeclareMathOperator{\Hom}{Hom}
\newcommand{\res}{res}
\newcommand{\tr}{tr}
\newcommand{\Cp}[1]{C_{p^{#1}}}
\newcommand{\Cpn}{\Cp{n}}
\newcommand{\cp}[1]{p^{#1}}
\newcommand{\m}[1]{{\protect\underline{#1}}}
\newcommand{\mZ}{\m{\Z}}
\newcommand{\mM}{\m{M}}
\newcommand{\mB}{\m{B}}
\mathchardef\mhyphen="2D
\newcommand{\EM}{Eilenberg-Mac~Lane}
\newtheorem{theorem}{Theorem}[section]
\newtheorem{thm}[theorem]{Theorem}
\newtheorem{lemma}[theorem]{Lemma}
\newtheorem{corollary}[theorem]{Corollary}
\newtheorem{cor}[theorem]{Corollary}
\newtheorem{definition}[theorem]{Definition}
\newtheorem{defin}[theorem]{Definition}
\newtheorem{defn}[theorem]{Definition}
\newtheorem{proposition}[theorem]{Proposition}
\newtheorem{prop}[theorem]{Proposition}
\newtheorem{remark}[theorem]{Remark}
\newtheorem{notation}[theorem]{Notation}
\newcommand{\rep}{representation}
\begin{document}
 
\title[Slices and $RO(G)$-graded Suspensions I]{The Slice Spectral
Sequence for certain $RO({\Cpn} )$-graded Suspensions of $H\mZ$}

\author{M.~A.~Hill}
\address{Department of Mathematics \\ University of California, Los Angeles
\\Los Angeles, CA 90095}
\email{mikehill@math.ucla.edu}

\author{M.~J.~Hopkins}
\address{Department of Mathematics \\ Harvard University
\\Cambridge, MA 02138}
\email{mjh@math.harvard.edu}

\author{D.~C.~Ravenel}
\address{Department of Mathematics \\ University of Rochester 
\\Rochester, NY}
\email{doug@math.rochester.edu}

\thanks{M.~A.~Hill was partially supported by NSF grants DMS-0905160 , DMS-1307896 and
the Sloan foundation}
\thanks{M.~J.~Hopkins was partially supported the  NSF grant
DMS-0906194}
\thanks{D.~C.~Ravenel was partially supported by the NSF grants DMS-1307896 and  DMS-0901560}
\thanks{All three authors received support from the DARPA grants
HR0011-10-1-0054-DOD35CAP and FA9550-07-1-0555}

\begin{abstract}
We study the slice filtration and associated spectral sequence for a family of $RO(C_{p^{n}})$-graded suspensions of the Eilenberg-MacLane spectrum for the constant Mackey functor $\underline{\mathbb Z}$. Since $H\underline{\mathbb Z}$ is the zero slice of the sphere spectrum, this begins an analysis of how one can describe the slices of a suspension in terms of the original slices. 
\end{abstract}
 
\maketitle

\section{Introduction}
A key ingredient to our solution to the Kervaire invariant one problem \cite{HHR} was a new equivariant tool: the slice filtration. Generalizing the $C_{2}$-equivariant work of Dugger
\cite{Dugger} and modeled on Voevodsky's motivic slice filtration
\cite{Voe:Open}, this is an exhaustive, strongly convergent
filtration on genuine $G$-spectra for a finite group $G$. While quite
powerful, the filtration quotients are quite difficult to determine,
and much of our solution revolved around determining them for various
spectra built out of $\MU$.

Classically one has a map from a spectrum $X$ to its
$n$\textsuperscript{th} Postinikov section $P^{n}X$.  The latter is
obtained from $X$ by attaching cells to kill off all homotopy groups
above dimension $n$, and the tower can be thought of as assembling $X$
by putting in the homotopy groups one at a time. This process can also
be described as a localization or nullification functor, where we
localize by killing the subcategory of $n$-connected spectra. Though
this is a very big subcategory, it is generated by a much smaller set:
the spheres of dimension at least $(n+1)$.

In the equivariant context, there are two axes along which we can vary
this construction:
\begin{enumerate}
\item there are more spheres, namely representation spheres for real
representations of $G$ and
\item there are subgroups of $G$, the behavior for each of which can
be controlled.
\end{enumerate}
The equivariant Postnikov filtration largely ignores the first option,
choosing instead to build a localization tower killing the
subcategories generated by all spheres (with a trivial action!) of
dimension at least $(n+1)$ for all subgroups of $G$. Equivalently, we
nullify the subcategory generated by all spectra of the form
\[
G_{+}\wedge_{H}S^{m}
\]
for $m>n$.

The slice filtration takes more seriously the option of blending the
representation theory and the homotopy theory.
\begin{defn}\label{def-filt}
For each integer $n$, let $\tau_{\geq n}$ denote the localizing
subcategory of $G$-spectra generated by
\[
G_{+}\smashove{H}S^{k\rho_{H}-\epsilon },
\] 
where $H$ ranges over all subgroups of $G$, $\rho_{H}$ is the regular
representation of $H$, $k|H|-\epsilon \geq n$ and $\epsilon =0,1$.

If $X$ is an object of $\tau_{\geq n}$, we say that $X$ is slice
greater than or equal to $n$ and that $X$ is slice $(n-1)$-connected.

The associated localization tower:
\[
X\to P^{\ast}X
\]
is the slice tower of $X$.
\end{defn}

The slice filtration and the slice tower refine the non-equivariant
Postnikov tower, in the sense that forgetting the $G$-action takes the
slice tower to the Postnikov tower, but the equivariant layers of the
slice tower are in general much more complicated. In particular, they
need not be {\EM} spectra.

This paper is the start of a short series analyzing several curious
points of the slice filtration for cyclic $p$-groups applied to a very
simple yet interesting family: the suspensions of the {\EM} spectrum
$H\mZ$ by virtual representation spheres. This paper will discuss the
suspensions by multiples of the irreducible faithful representation
$\lambda$. This compliments the work of Yarnall on the ordinary suspensions of $H\m{\Z}$ \cite{Yarnall}.

We make a huge, blanket assumption and a slight abuse of
notation. {\em{Everything}} we consider is localized at $p$, where $p$
is the prime dividing the order of the group. Thus when we write
things like $\Z$, we actually mean $\Z_{(p)}$. This being said, we
have tried as much as possible to make integral statements which are
prime independent (specifying actual representations, rather than
$J$-equivalence classes whenever possible). Additionally, essentially
everything we say holds for odd primes. The case of $p=2$ behaves
quite differently, given the existence of a non-trivial
$1$-dimensional real representation, the sign representation.

\subsection{Names of representations} Fix an identification of ${\Cpn}
$ with the $p^{n}$th roots of unity $\mu_{p^{n}}$. For all $k\in \Z$,
let $\lambda(k)$ denote the composite of the inclusion of these roots
of unity with the degree $k$-map on $S^{1}$. This is a representation
of complex dimension $1$.

The real regular representation will be denoted $\rho$. We have a splitting
\[
\rho=1+\bigoplus_{j=1}^{\tfrac{p^{n}-1}{2}}\lambda(j)
\]
for $p>2$. We will also let $\bar{\rho}$ denote the reduced regular
representation, the quotient of the regular representation by the
trivial summand.

\subsection{Representation Spheres} Representation spheres for
$C_{p^{n}}$ have an exceptionally simple cell structure. This renders
computations much more tractable than one might expect. The key
feature is that the subgroups of $C_{p^{n}}$ are linearly ordered, and
we can use this to build significantly smaller cell structures that
might be expected. Our analysis follows \cite{HHR} and
\cite{CDMProof}.

We first consider the sphere $S^{\lambda(p^{k})}$. A cell structure is
given by rays from the origin through the roots of unity, together
with the sectors between these one cells. This gives a cell structure
\[
S^{0}\cup C_{p^{n}}/C_{p^{k}+}\wedge e^{1}\cup
C_{p^{n}}/C_{p^{k}+}\wedge e^{2}.
\]
A picture is given in Figure~\ref{fig:Cellmufour} for $\lambda(1)$ and
$C_{8}$.

\begin{figure}[h]
\begin{tikzpicture}[scale=1]
\def\R{1.5} 
\def\angEl{35} 
\filldraw[ball color=white] (0,0) circle (\R);
\foreach \t in {0,-45,...,-135} { \DrawLongitudeCircle[\R]{\t} }
\end{tikzpicture}
\caption{}
\label{fig:Cellmufour}
\end{figure}
 
We can smash these together to get a cell structure on $S^{V}$ for any
[virtual] representation $V$. The simplification arises from the
different stabilizers which arise. For all $m\leq k$, the restriction
$i_{C_{p^{m}}}^{\ast}\lambda(p^{k})$ is trivial. Thus
\begin{align*}
S^{\lambda(p^{m})+\lambda(p^{k})}
 &\cong S^{\lambda(p^{m})}\wedge S^{\lambda(p^{k)}}\\
 &\cong S^{\lambda(p^{m})}\wedge S^{\lambda(p^{k)}}\\
 &\cong S^{\lambda(p^{k})} \cup C_{p^{k}}/C_{p^{m}+}\wedge 
              e^{3}\cup C_{p^{k}}/C_{p^{m}+}\wedge e^{4}.
\end{align*}

By induction, this shows how to build any representation sphere:
decompose $V$ into irreducibles, sort these in order of decreasing
stabilizer subgroup, and repeat the above trick. As an aside, this
also allows a determination of a cell structure for virtual
representation spheres, where we approach them the same way.

Our discussion of cell structure focused primarily on that of
$\lambda(p^{k})$. This is because equivariant homotopy groups are
actually more honestly called $JO(G)$-graded: they depend not on the
representation but rather on the homotopy type of the associated
sphere. In the $p$-local context, most of the irreducible
representations for ${\Cpn} $ have equivalent one point
compactifications: if $r$ is prime to $p$, then
\[
S^{\lambda(rp^{k})}\simeq S^{\lambda(p^{k})}.
\]

For this reason, we can single out a single $JO$-equivalence class:
let $\lambda_{k}=\lambda(p^{k})$. {\em We will denote $\lambda_{0}$ 
simply by $\lambda $.} As $k$ varies, the associated representation
spheres hit every $JO$-equivalence class.

We summarize the above discussion in a simple proposition.

\begin{prop}\label{prop:CellStructure}
If
\[
V=k_{n}+k_{n-1}\lambda_{n-1}+\dots+k_{0}\lambda_{0},
\]
then
\begin{multline*}
S^{V}=S^{k_{n}}\cup {\Cp{n}/\Cp{n-1}}_{+}\wedge e^{k_{n}+1}\underset{1-\gamma}{\cup} {\Cp{n}/\Cp{n-1}}_{+}\wedge e^{k_{n}+2}\cup \dots \cup \\ {\Cp{n}/\Cp{n-1}}_{+}\wedge e^{k_{n}+2k_{n-1}-1}
\underset{1-\gamma}{\cup} {\Cp{n}/\Cp{n-1}}_{+}\wedge e^{k_{n}+2k_{n-1}} \cup \\ {\Cp{n}/\Cp{n-2}}_{+}\wedge e^{k_{n}+2k_{n-1}+1}\cup\dots \underset{1-\gamma}{\cup} {\Cp{n}}_{+}\wedge e^{\dim V}, 
\end{multline*}
where $\gamma$ is a generator of $\Cp{n}$.
\end{prop}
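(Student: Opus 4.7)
The plan is to induct on the total number of nontrivial summands $N = \sum_{i=0}^{n-1} k_i$, peeling off $\lambda$-summands in decreasing order of index. The base case $N = 0$ gives $V = k_n$ and $S^V = S^{k_n}$, satisfying the formula vacuously.

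For the inductive step, let $j^* = \max\{i : k_i > 0\}$, so $V = V' + \lambda_{j^*}$ where $V'$ contains no $\lambda_i$ with $i > j^*$. By the induction hypothesis for $V'$, every orbit cell of $S^{V'}$ (above the fixed subcomplex $S^{k_n}$) has isotropy $\Cp{m}$ with $m \leq j^*$. Hence the quotient $\bar X := S^{V'}/S^{k_n}$ is built from cells induced from $\Cp{j^*}$: each orbit cell ${\Cpn/\Cp{m}}_{+} \wedge e^\ell$ with $m \leq j^*$ equals ${\Cpn}_{+} \wedge_{\Cp{j^*}}({\Cp{j^*}/\Cp{m}}_{+} \wedge e^\ell)$, and the $\Cpn/\Cp{j^*}$-action on $\bar X$ away from the basepoint is free, so by standard equivariant cellular arguments $\bar X \cong {\Cpn}_{+} \wedge_{\Cp{j^*}} Y$ for some $\Cp{j^*}$-CW complex $Y$.

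Now smash the cofiber sequence $S^{k_n} \to S^{V'} \to \bar X$ with $S^{\lambda_{j^*}}$. Since $i_{\Cp{j^*}}^{*}\lambda_{j^*}$ is the trivial $2$-dimensional representation, the projection formula gives
\[
\bar X \wedge S^{\lambda_{j^*}} \;\cong\; {\Cpn}_{+} \wedge_{\Cp{j^*}}(Y \wedge S^2) \;=\; \Sigma^2 \bar X,
\]
so the resulting cofiber sequence reads $S^{k_n + \lambda_{j^*}} \to S^V \to \Sigma^2 \bar X$. The base term $S^{k_n + \lambda_{j^*}} = \Sigma^{k_n} S^{\lambda_{j^*}}$ has the cell structure $S^{k_n} \cup {\Cpn/\Cp{j^*}}_{+} \wedge e^{k_n+1}\cup_{1-\gamma} {\Cpn/\Cp{j^*}}_{+} \wedge e^{k_n+2}$, inherited from the previously-established cell structure of $S^{\lambda_{j^*}}$. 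Attaching $\Sigma^2 \bar X$ introduces every orbit cell of $\bar X$ shifted up by $2$ in dimension; concatenating the two new ${\Cpn/\Cp{j^*}}_{+}$ cells with the $2(k_{j^*}-1)$ shifted cells of the same orbit type yields exactly $2k_{j^*}$ cells in dimensions $k_n+1, \ldots, k_n+2k_{j^*}$, followed by the ${\Cpn/\Cp{j^*-1}}_{+}, \ldots, {\Cpn}_{+}$ cells inherited (each shifted by $2$) from $V'$ in the order prescribed by the proposition. The $1-\gamma$ attaching maps propagate from the base case through each inductive suspension.

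The main technical obstacle is the projection-formula step, i.e., establishing $\bar X \wedge S^{\lambda_{j^*}} \simeq \Sigma^2 \bar X$ equivariantly. This rests on identifying $\bar X$ as an induced $\Cpn$-complex (so that the attaching maps come from $\Cp{j^*}$-equivariant maps in $Y$) and on the triviality of $i_{\Cp{j^*}}^{*}\lambda_{j^*}$, which replaces the restricted representation sphere with a plain $S^2$ and produces the $\Sigma^2$ on the right. Verifying this carefully amounts to bookkeeping the equivariant CW structure through each inductive step.
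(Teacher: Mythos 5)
Your induction runs in the opposite direction from the paper's: the paper smashes in one $S^{\lambda_{j}}$ at a time in order of \emph{decreasing} stabilizer, so that each new summand simply appends two orbit cells in the top dimensions, while you peel off a copy of $\lambda_{j^{*}}$ with the \emph{largest} stabilizer, shifting the existing non-fixed cells up by two and inserting two new cells just above $S^{k_{n}}$. Either organization can be made to work. However, the step where you assert $\bar X\cong{\Cpn}_{+}\wedge_{\Cp{j^{*}}}Y$ is false: a based $\Cpn$-CW complex whose cells are all induced from subgroups of $\Cp{j^{*}}$ need not itself be induced from $\Cp{j^{*}}$. For the minimal case $G=C_{p}$, $j^{*}=0$, $V'=\lambda_{0}$, the quotient $\bar X=S^{\lambda_{0}}/S^{0}\simeq\Sigma S(\lambda_{0})_{+}$ is a connected complex with $\widetilde{H}_{1}\cong\mathbb{Z}$, whereas ${C_{p}}_{+}\wedge Y$ is a $p$-fold wedge whose reduced homology is a $p$-fold direct sum of that of $Y$, so the two cannot agree for $p>1$. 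Freeness of the $\Cpn/\Cp{j^{*}}$-action away from the basepoint gives you a CW quotient, not an induced complex.

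Fortunately the statement you actually need, $\bar X\wedge S^{\lambda_{j^{*}}}\simeq\Sigma^{2}\bar X$, does not require a global induced structure. Filter $\bar X$ by skeleta and apply the projection formula to each orbit cell individually: for $m\le j^{*}$ one has
\[
\bigl({\Cpn/\Cp{m}}_{+}\wedge e^{\ell}\bigr)\wedge S^{\lambda_{j^{*}}}
\cong {\Cpn}_{+}\wedge_{\Cp{m}}\bigl(e^{\ell}\wedge i^{*}_{\Cp{m}}S^{\lambda_{j^{*}}}\bigr)
\cong {\Cpn/\Cp{m}}_{+}\wedge e^{\ell+2},
\]
since $i^{*}_{\Cp{m}}\lambda_{j^{*}}$ is trivial. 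A single chosen trivialization of $i^{*}_{\Cp{j^{*}}}\lambda_{j^{*}}$ makes these identifications compatible across all cells and hence with the attaching maps, so the entire skeletal filtration double-suspends. With that repair the rest of your bookkeeping goes through and this is a legitimate alternative to the paper's construction.
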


\subsection{The slice filtration} 

We quickly recall several important
facts about the slice filtration. This section contains no new
results, and all proofs can be found in \cite{Hill:Primer} and in
\cite[\S3-4]{HHR}.

We first connect the slice filtration and the Postnikov filtration.

\begin{proposition}
If $X$ is $(n-1)$-connected for $n\geq 0$, then $X$ is in $\tau_{\geq n}$.
\end{proposition}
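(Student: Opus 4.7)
The plan is to establish the stronger claim that $G_+ \wedge_H S^m \in \tau_{\geq m}$ for every subgroup $H \subseteq G$ and every integer $m \geq 0$. Granting this, the proposition follows: any $(n-1)$-connected $G$-spectrum $X$ admits an equivariant cell decomposition built from cells $G/H_+ \wedge S^m$ with $m \geq n$, each of which lies in $\tau_{\geq m} \subseteq \tau_{\geq n}$ by the claim, and the localizing subcategory $\tau_{\geq n}$ is closed under the coproducts and cofibers needed to assemble $X$.

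The claim itself will be proved by induction on $|H|$. The base case $H = e$ is immediate, since $G_+ \wedge S^m = G_+ \wedge_e S^{m\rho_e}$ is literally a generator of $\tau_{\geq m}$ ($\rho_e$ being one-dimensional and trivial). The case $m = 0$ for arbitrary $H$ is also immediate: with $k = 0$, $\epsilon = 0$, the spectrum $G/H_+ = G_+ \wedge_H S^{0 \cdot \rho_H - 0}$ is already among the generators of $\tau_{\geq 0}$.

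For the inductive step, fix $H \neq e$ and $m \geq 1$, and assume the claim for all proper subgroups of $H$. Consider the inclusion of the $H$-fixed points
\[
S^m \,\cong\, \bigl(S^{m\rho_H}\bigr)^H \hookrightarrow S^{m\rho_H},
\]
and let $C$ denote its cofiber. Proposition~\ref{prop:CellStructure}, applied to $V = m\rho_H$, identifies $S^m$ with the bottom of the cell filtration of $S^{m\rho_H}$ (the trivial summands $m \cdot 1$ give $k_n = m$) and shows that $C$ is built from free $H$-cells $H/K_+ \wedge e^j$ with $K \subsetneq H$ and $j \geq m+1$. Inducing to $G$ yields the cofiber sequence
\[
G_+ \wedge_H S^m \longrightarrow G_+ \wedge_H S^{m\rho_H} \longrightarrow G_+ \wedge_H C.
\]
The middle term is a generator of $\tau_{\geq m|H|} \subseteq \tau_{\geq m}$. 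The right-hand term has a finite filtration whose subquotients are $G_+ \wedge_K S^j$ with $K \subsetneq H$ and $j \geq m+1$; by the inductive hypothesis each lies in $\tau_{\geq j} \subseteq \tau_{\geq m}$, and hence $G_+ \wedge_H C \in \tau_{\geq m}$. Closure of $\tau_{\geq m}$ under the 2-out-of-3 property for cofiber sequences forces $G_+ \wedge_H S^m \in \tau_{\geq m}$.

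The main obstacle is the cell-structure input: one must read off Proposition~\ref{prop:CellStructure} that $S^m$ really sits at the bottom of the $H$-CW presentation of $S^{m\rho_H}$ and that every cell above it is free and of dimension $\geq m+1$. This is also the underlying reason the definition of $\tau_{\geq n}$ is organized around regular representation spheres: the $H$-fixed dimension of $k\rho_H$ equals $k$, so the cellular complement is supported entirely on proper subgroups and the induction on $|H|$ closes cleanly.
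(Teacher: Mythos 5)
Your reduction and inductive strategy match the standard proof from HHR~\S4 (to which the paper defers for this fact): show $G_+ \wedge_H S^m \in \tau_{\geq m}$ by inducting on $|H|$ via the inclusion of the $H$-fixed sphere into $S^{m\rho_H}$. The argument is almost right, but the final appeal to ``closure of $\tau_{\geq m}$ under the 2-out-of-3 property for cofiber sequences'' is false as a general principle and needs to be replaced. Localizing subcategories in the slice sense are closed under weak equivalences, homotopy colimits, and extensions, but they are \emph{not} triangulated subcategories: they are not closed under desuspension, hence not closed under taking fibers, hence do not satisfy 2-out-of-3. (If they were, one could desuspend the generators indefinitely and $\tau_{\geq m}$ would be the whole category.) In a cofiber sequence $A \to B \to C$, knowing $B, C \in \tau_{\geq m}$ does not let you conclude $A \in \tau_{\geq m}$; the only legal move is the extension axiom, which goes the other way.

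The fix is already present in what you proved but never used: the cells of $G_+ \wedge_H C$ sit in dimension $\geq m+1$, not merely $\geq m$. Rotate the triangle to
\[
\Sigma^{-1}\bigl(G_+ \wedge_H C\bigr) \longrightarrow G_+ \wedge_H S^m \longrightarrow G_+ \wedge_H S^{m\rho_H}.
\]
The left-hand term is built from $G_+ \wedge_K S^{j-1}$ with $K \subsetneq H$ and $j-1 \geq m$, so the inductive hypothesis places it in $\tau_{\geq m}$; the right-hand term is a generator of $\tau_{\geq m|H|} \subseteq \tau_{\geq m}$. Now closure under \emph{extensions} (the genuine axiom) gives $G_+ \wedge_H S^m \in \tau_{\geq m}$. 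The ``$+1$'' in the dimension bound is the load-bearing fact here, and your phrasing obscures that it was needed at all. One smaller point: Proposition~\ref{prop:CellStructure} is stated for $C_{p^n}$, which is the paper's setting, so the cell-structure input is fine here; for a general finite group one would instead invoke the general fact that $S^{m\rho_H}/S^{(m\rho_H)^H}$ is built from cells with proper isotropy in dimensions $\geq m+1$.
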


The converse of this is visibly not true, as generators of the form
$S^{k\rho_{G}}$ are not $(k|G|-1)$-connected for $G\neq\{e\}$.

A slight elaboration on this lets us generalize this for smash products.
\begin{proposition}
If $X$ is $(-1)$-connected and $Y$ is in $\tau_{\geq n}$, then
$X\wedge Y$ is in $\tau_{\geq n}$.
\end{proposition}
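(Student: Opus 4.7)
The plan is to exploit the localizing-subcategory structure of $\tau_{\geq n}$. Since $\tau_{\geq n}$ is closed under cofibers, wedges, retracts, and filtered colimits, and since $X\wedge(-)$ preserves each of these, it suffices to verify the conclusion when $Y$ is one of the generators $G_{+}\smashove{H}S^{k\rho_{H}-\epsilon}$ with $k|H|-\epsilon\geq n$ and $\epsilon\in\{0,1\}$.

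For such a $Y$, Frobenius reciprocity gives
\[
X\wedge\bigl(G_{+}\smashove{H}S^{k\rho_{H}-\epsilon}\bigr)\;\simeq\;G_{+}\smashove{H}\bigl(i_{H}^{*}X\wedge S^{k\rho_{H}-\epsilon}\bigr),
\]
and induction from $H$ to $G$ carries $\tau_{\geq m}^{H}$ into $\tau_{\geq m}^{G}$, since an $H$-slice generator $H_{+}\smashove{K}S^{j\rho_{K}-\delta}$ induces to the $G$-slice generator $G_{+}\smashove{K}S^{j\rho_{K}-\delta}$. Because $i_{H}^{*}X$ is still $(-1)$-connected, this reduces matters to the analogous claim inside $H$-spectra: for any $(-1)$-connected $H$-spectrum $Z$, the smash product $Z\wedge S^{k\rho_{H}-\epsilon}$ lies in $\tau_{\geq k|H|-\epsilon}^{H}$.

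Next I would resolve $Z$ as a filtered colimit of finite $H$-CW spectra built from cells $H/L_{+}\wedge D^{m}$ with $m\geq 0$; the closure properties of $\tau_{\geq k|H|-\epsilon}^{H}$ then reduce the problem to a single cell. Using the identity $i_{L}^{*}\rho_{H}=[H:L]\rho_{L}$,
\[
H/L_{+}\wedge S^{m}\wedge S^{k\rho_{H}-\epsilon}\;\simeq\;H_{+}\smashove{L}S^{k[H:L]\rho_{L}+(m-\epsilon)}.
\]
When $m=0$ and $\epsilon=1$ this is exactly the $H$-slice generator $H_{+}\smashove{L}S^{k[H:L]\rho_{L}-1}$ of $\tau_{\geq k|H|-1}^{H}$. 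Otherwise $m-\epsilon\geq 0$, and the spectrum is the $(m-\epsilon)$-fold ordinary suspension of the generator $H_{+}\smashove{L}S^{k[H:L]\rho_{L}}\in\tau_{\geq k|H|}^{H}$.

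The main obstacle is then to check that $\Sigma$ carries $\tau_{\geq n}$ into $\tau_{\geq n+1}$. For generators of the form $G_{+}\smashove{H}S^{j\rho_{H}-1}$ this is immediate, but $\Sigma\bigl(G_{+}\smashove{H}S^{j\rho_{H}}\bigr)=G_{+}\smashove{H}S^{j\rho_{H}+1}$ is not itself a generator. At this one step I would invoke the preceding proposition: this spectrum is $j|H|$-connected, hence lies in $\tau_{\geq j|H|+1}$. Everything else in the argument is formal manipulation with the projection formula and the closure axioms of a localizing subcategory.
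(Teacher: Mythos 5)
Your overall strategy is sound and matches the standard argument: reduce to the generators of $\tau_{\geq n}$ via the closure properties of a localizing subcategory, apply the projection formula $X\wedge\bigl(G_{+}\wedge_{H}S^{k\rho_{H}-\epsilon}\bigr)\simeq G_{+}\wedge_{H}\bigl(i_{H}^{*}X\wedge S^{k\rho_{H}-\epsilon}\bigr)$ together with the fact that induction carries $H$-slice cells to $G$-slice cells, and then resolve the $(-1)$-connected factor by equivariant cells of non-negative dimension. This is the approach taken in \cite[\S 4]{HHR} and \cite{Hill:Primer}, which the paper cites rather than reproving.

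There is, however, a genuine error at the step you yourself single out as the crux. You assert that $G_{+}\wedge_{H}S^{j\rho_{H}+1}$ is $j|H|$-connected. It is not: $(j\rho_{H})^{H}$ has dimension $j$, so $S^{j\rho_{H}}$ is only $(j-1)$-connected as an $H$-spectrum, $S^{j\rho_{H}+1}$ is only $j$-connected, and induction does not improve this. The paper itself flags exactly this point in the remark following its first proposition: generators of the form $S^{k\rho_{G}}$ are \emph{not} $(k|G|-1)$-connected for $G\neq\{e\}$. So the connectivity criterion does not place $G_{+}\wedge_{H}S^{j\rho_{H}+1}$ in $\tau_{\geq j|H|+1}$, and your argument that $\Sigma$ raises slice-connectivity by one does not go through as written.

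Fortunately, your proof never needed that. What the final case actually requires is that $\Sigma^{m-\epsilon}\bigl(H_{+}\wedge_{L}S^{k[H:L]\rho_{L}}\bigr)$ lie in $\tau_{\geq k|H|-\epsilon}^{H}$ for $m-\epsilon\geq 0$, and since $H_{+}\wedge_{L}S^{k[H:L]\rho_{L}}$ already lies in $\tau_{\geq k|H|}^{H}\subseteq\tau_{\geq k|H|-\epsilon}^{H}$, it suffices to know that $\Sigma$ merely \emph{preserves} each $\tau_{\geq m}$, not that it raises the index. That is immediate: $\tau_{\geq m}$ is a localizing subcategory, so it contains the zero object and is closed under cofibers, and $\Sigma Z$ is the cofiber of $Z\to *$. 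Substituting this trivial observation for the false connectivity claim repairs the proof.
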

Unfortunately, smashing reduced regular representation spheres for $G$
shows that results of this form are the best possible.

The form of the generating spectra for $\tau_{\geq n}$ shows that the
slice tower commutes with suspensions by regular representation
spheres.

\begin{proposition}
For any spectrum $X$,
\[
P^{k}\Sigma^{\rho}X\cong\Sigma^{\rho}P^{k-|G|}X,
\]
and identically for slices.
\end{proposition}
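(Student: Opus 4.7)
The plan is to reduce the statement to the single fact that smashing with $S^{\rho}$ shifts the slice filtration by exactly $|G|$, in the sense that $\Sigma^{\rho}\tau_{\geq n} = \tau_{\geq n+|G|}$ as localizing subcategories. The key input is the representation-theoretic identity $i_{H}^{\ast}\rho_{G} \cong [G:H]\,\rho_{H}$ for each subgroup $H \leq G$. Combined with the usual projection formula
\[
S^{\rho_{G}} \wedge (G_{+}\wedge_{H} S^{k\rho_{H}-\epsilon}) \cong G_{+}\wedge_{H} S^{(k+[G:H])\rho_{H}-\epsilon},
\]
this shows $\Sigma^{\rho}$ carries each generator of $\tau_{\geq k|H|-\epsilon}$ to a generator of $\tau_{\geq k|H|-\epsilon+|G|}$, since $(k+[G:H])|H| = k|H|+|G|$. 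As $\Sigma^{\rho}$ is invertible with inverse $\Sigma^{-\rho}$, the containment goes both ways and the asserted identity of subcategories holds.

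With this shift in hand, I would identify $\Sigma^{\rho}P^{k-|G|}X$ with $P^{k}\Sigma^{\rho}X$ by checking the universal property of slice truncation. Smashing the slice cofiber sequence $P_{k-|G|+1}X \to X \to P^{k-|G|}X$ with $S^{\rho}$ produces a cofiber sequence whose fiber $\Sigma^{\rho}P_{k-|G|+1}X$ lies in $\tau_{\geq k+1}$ by the preceding observation. For the locality condition, given any $T \in \tau_{\geq k+1}$, one has $\Sigma^{-\rho}T \in \tau_{\geq k-|G|+1}$, so
\[
[T,\Sigma^{\rho}P^{k-|G|}X] = [\Sigma^{-\rho}T,P^{k-|G|}X] = 0,
\]
because $P^{k-|G|}X$ is local with respect to $\tau_{\geq k-|G|+1}$. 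These two properties characterize the $k$-slice truncation, producing the desired equivalence.

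The statement about slices then follows formally. Writing $P^{k}_{k}$ for the $k$-slice, defined as the fiber of $P^{k}X \to P^{k-1}X$, I would apply the identity just established at both indices $k$ and $k-1$, and use exactness of $\Sigma^{\rho}$ to conclude
\[
P^{k}_{k}\Sigma^{\rho}X \cong \Sigma^{\rho} P^{k-|G|}_{k-|G|} X.
\]
The only real content in the argument is the behavior of $\Sigma^{\rho}$ on generators, which is completely controlled by the restriction identity for $\rho_{G}$; after that the localization manipulation is standard, so no serious obstacle is expected.
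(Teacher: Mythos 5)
Your argument is correct and is precisely the expansion of the paper's one-line justification (``the form of the generating spectra for $\tau_{\geq n}$ shows that the slice tower commutes with suspensions by regular representation spheres''); the paper itself does not spell out a proof but defers to \cite{Hill:Primer} and \cite[\S3-4]{HHR}. Your two steps — the shift $\Sigma^{\rho}\tau_{\geq n}=\tau_{\geq n+|G|}$ via $i_{H}^{\ast}\rho_{G}\cong[G:H]\rho_{H}$ together with the projection formula, followed by the universal property of the localization — are exactly the intended argument, and the deduction for slices from exactness of $\Sigma^{\rho}$ is routine.
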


The final feature we will need is a recipe for determining for which
$n$ a particular representation sphere $S^{V}$ is in $\tau_{\geq
n}$. This is restatement of \cite[Corollary~3.9]{Hill:Primer},
specializing the Corollary there to the case $Y=S^{0}$.

\begin{proposition}\label{prop:SliceConnectivity}
If $W$ is a representation of $G$ such that $S^{W}$ is in $\tau_{\geq
\dim W}$ and $V$ is a sub representation such that
\begin{enumerate}
\item the inclusion induces an equality $V^{G}=W^{G}$ and
\item for all proper subgroups $H$, the restriction
$i_{H}^{\ast}S^{V}$ is in $\tau_{\geq \dim V}$,
\end{enumerate}
then $S^{V}$ is in $\tau_{\geq\dim V}$. 
\end{proposition}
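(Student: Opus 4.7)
The plan is to express $S^{V}$ as the middle term of a cofiber sequence whose outer terms are visibly in $\tau_{\geq \dim V}$. Write $W = V \oplus U$ with $U = V^{\perp}$. Hypothesis (1) forces $U^{G} = 0$; this is its only use, but it is essential because it says the unit sphere $S(U)$ has no $G$-fixed points. Smashing the Euler-class cofiber sequence $S(U)_{+} \to S^{0} \xrightarrow{a_{U}} S^{U}$ with $S^{V}$ gives
\[
S^{V} \wedge S(U)_{+} \to S^{V} \to S^{W}.
\]
Since $\tau_{\geq \dim V}$ is thick, it suffices to place both outer terms in it; for $S^{W}$ this is automatic from $S^{W} \in \tau_{\geq \dim W} \subseteq \tau_{\geq \dim V}$.

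The real work is showing $S^{V} \wedge S(U)_{+} \in \tau_{\geq \dim V}$. Because $U^{G} = 0$, every cell of any $G$-CW structure on $S(U)$ has the form $G/K_{+} \wedge D^{k}$ with $K$ a proper subgroup, giving $S^{V} \wedge S(U)_{+}$ a finite filtration whose successive cofibers have the form $G_{+} \wedge_{K}(i_{K}^{\ast} S^{V} \wedge S^{k})$ with $K$ proper and $k \geq 0$. For each such piece: hypothesis (2) puts $i_{K}^{\ast} S^{V}$ in $\tau_{\geq \dim V}$ as a $K$-spectrum; the second proposition quoted above lets me smash with the $(-1)$-connected trivial sphere $S^{k}$ without leaving $\tau_{\geq \dim V}$; and the induced form $G_{+} \wedge_{H} S^{j\rho_{H} - \epsilon}$ of the generators of $\tau_{\geq n}$ makes it immediate that $G_{+} \wedge_{K}(-)$ preserves slice $\geq \dim V$. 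Assembling these gives $S^{V} \wedge S(U)_{+} \in \tau_{\geq \dim V}$, and the cofiber sequence then yields the result.

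The main technical step is the cell-by-cell argument for the fiber term: the two hypotheses cooperate exactly as needed, with (1) ruling out $K = G$ as a cell stabilizer in $S(U)$ and (2) supplying slice-connectivity for each remaining proper $K$. Once the compatibility of induction with the generators of the slice filtration is noted, the rest is bookkeeping within a cofiber sequence in a thick subcategory.
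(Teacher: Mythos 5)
Your proof is correct. The paper does not actually prove this proposition --- it is stated as a specialization of \cite[Corollary~3.9]{Hill:Primer} --- so there is no in-paper argument to compare against, but your isotropy-separation argument (smashing $S^{V}$ with the cofiber sequence $S(U)_{+}\to S^{0}\xrightarrow{a_{U}}S^{U}$, using $U^{G}=0$ to filter $S^{V}\wedge S(U)_{+}$ by cells induced from proper subgroups, then feeding hypothesis (2) together with the smash-with-$(-1)$-connected proposition into the observation that $G_{+}\wedge_{K}(-)$ carries slice-$\geq n$ $K$-spectra to slice-$\geq n$ $G$-spectra) is the standard route and is, in substance, how the result is proved in the cited source.

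One wording correction worth making: $\tau_{\geq n}$ is not a thick subcategory in the usual triangulated sense, since it is not closed under desuspension (that would collapse the filtration). The closure property you actually need and actually use is that $\tau_{\geq n}$, as a ``localizing subcategory'' in the sense of Definition~\ref{def-filt}, is closed under extensions: if the two outer terms of a cofiber sequence lie in $\tau_{\geq n}$, so does the middle term. That property does hold, and your argument goes through unchanged once ``thick'' is replaced by ``closed under extensions.''
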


The conditions are very easy to check, by induction on the order of
the group. In general, we will alway choose $W$ to be a regular
representation or a reduced regular representation, as these are
guaranteed to be in the correct localizing categories.

\section[Dramatis personae: some special Z-modules]{Dramatis personae:
some special \texorpdfstring{${\mZ}$}{Z}-modules}\label{sec-drama}
  
A large number of Mackey functors will show up in our analysis. They
are all variants of the constant Mackey functor $\mZ$ and the Mackey
functor $\mB$ that is the ``Bredon homology Mackey functor''. Many of
our discussions are facilitated by pictures of Mackey functors, and
following Lewis, we draw them vertically. A generic Mackey functor $\mM$ for  
$C_{p}$ will be drawn

\[
\xymatrix{{\mM(G/G)}\ar@(l,l)[d]_{\res} \\
{\mM(G/e)} \ar@(r,r)[u]_{\tr} \ar@(dl,dr)[]_{\gamma} }
\]
where $\res$ is the restriction map, $\tr$ is the transfer, and $\gamma$
generates the Weyl group. For larger cyclic groups, we will use the
obvious extension of this notation. Moreover, if the Weyl action is
trivial or obvious, then we will suppress the map $\gamma$.

\subsection{Forms of \texorpdfstring{$\mZ$}{Z}} 
The constant Mackey functor $\mZ$ (in
which all restriction maps are the identity are the transfers are
multiplication by the index) and its dual (in which the roles of
restriction and transfer are reversed) are just two members of a
family of distinct Mackey functors which take the value
$\mZ$ on each orbit $G/H$. For ${\Cpn} $, there are $2^{n}$ distinct
Mackey functors, corresponding to the $n$ choices ``is the restriction
$1$ or $p$'' for adjacent subgroups.

A large subfamily of these occur in our discussion, and we give some
notation here.
\begin{definition}
Let $0\leq j < k\leq n$ be integers. For each pair, let $\mZ(k,j)$
denote the Mackey functor with constant value $\Z$ for which the
restriction maps are
\[
res_{p^{s}}^{p^{s+1}}=
res_{C_{p^{s}}}^{C_{p^{s+1}}}=
\begin{cases}
1 & s < j, \\
p & j \leq s < k, \\
1 & k\leq s.
\end{cases}
\]
\end{definition}

{\em From now on, when a cyclic group appears as an index, we will
abbreviate it by the order of the group as we did above.}

Thus the restriction of $\mZ(k,j)$ to ${\Cp{j}} $ is just the constant
Mackey functor $\mZ$. For subgroups of ${\Cp{k}} $ which properly
contain ${\Cp{j}} $, it looks like the dual to the constant Mackey
functor $\mZ$, and for those which properly contain ${\Cp{k}} $, it
looks again like $\mZ$.

Equivalently, $\mZ(k,j)$ is the unique $\Z$-valued Mackey functor
$C_{p^{n}}$ in which $\res_{1}^{p^{j}}$, $\tr_{p^{j}}^{p^{k}}$ and
$\res_{p^{k}}^{p^{n}}$ are isomorphisms.

While $\mZ^{\ast}=\mZ(n,0)$, $\mZ$ does not occur in our list. If we
allow $j=k$, then $\mZ=\mZ(j,j)$ for any choice of $j$. For the
reader's convenience, we draw out the four variants of $\mZ$ that
occur for $C_{p^{2}}$.

\[
\xymatrix@!C=.85in{
{\mZ} 
    & {\mZ(1,0)} 
        & {\mZ(2,1)} 
            & {\mZ(2,0)} \\ 
{\Z}\ar@(l,l)[d]_{1} 
    & {\Z}\ar@(l,l)[d]_{1} 
        & {\Z}\ar@(l,l)[d]_{p} 
            & {\Z}\ar@(l,l)[d]_{p} \\
{\Z} \ar@(l,l)[d]_{1} \ar@(r,r)[u]_{p} 
    & {\Z}\ar@(l,l)[d]_{p}\ar@(r,r)[u]_{p} 
         & {\Z}\ar@(l,l)[d]_{1}\ar@(r,r)[u]_{1} 
            & {\Z} \ar@(r,r)[u]_{1} \ar@(l,l)[d]_{p}\\
{\Z}\ar@(r,r)[u]_{p} 
    & {\Z}\ar@(r,r)[u]_{1} 
         & {\Z}\ar@(r,r)[u]_{p} 
            & {\Z}\ar@(r,r)[u]_{1}
}
\]

In our analysis of the slices, we will need some elementary
computations with $\mZ(k,j)$. We will eventually produce a projective
resolution of all of these, but for now, we shall content ourselves to
$\mZ(n,k)$. 

First a brief digression on induced and restricted Mackey functors.   

\begin{defin}\label{def-IndRes}
Let $\mM$ and $\m{N}$ be Mackey functors for abelian groups $G$ and
$H\subset G$ respectively, and let $i_{H}^{*}$ denote the forgetful functor
from $G$-sets to $H$-sets.  Then the induced Mackey functor
$\Ind_{H}^{G}(\m{N})$ on $G$ and the restricted Mackey functor
$\Res_{H}^{G}(\mM)$ on $H$ are given by
\begin{align*}
\Ind_{H}^{G}\m{N} (G/K)
 & = \m{N} (i_{H}^{*}G/K)  \\
 & = \mycases{
\Z[G]\otimes_{\Z[H]}  \m{N} (H/K)
       &\mbox{for }K\subseteq H\\
(\Z[G]\otimes_{\Z[H]}  \m{N} (H/H))^{K}
       &\mbox{for }H\subseteq K
}  \\ 
\Res_{H}^{G}\mM (H/K)
 & = \mM (G/K)\qquad \mbox{for }K\subseteq H.
\end{align*}

\noindent The Weyl action of $G$ in $\Ind_{H}^{G}\m{N}$ is induced by
the $H$-action on $\m{N}$ and the $G$-action on $G/K$. The Weyl action
of $H$ in $\Res_{H}^{G}\mM$ is the restriction of the the $G$-action
on $\mM$.
\end{defin}

Note that the second description of $\Ind_{H}^{G}\m{N}$ is
not complete for general $G$ since there are subgroups $K$ that
neither contain nor are contained in $H$.  However it is complete when
$G$ is a cyclic $p$-group, the case of interest here.  

In particular if $\m{N}$ is the fixed point Mackey functor for a
$\Z[H]$-module $N$ defined by $\m{N} (H/K) = N^{K}$, then 
\[
\Ind_{H}^{G}\m{N} = \underline{\Z[G]\otimes_{\Z[H]}N},
\]
the fixed point Mackey functor for the
$\Z[G]$-module $\Z[G]\otimes_{\Z[H]}N$.

We now return to cyclic $p$-groups.  For the constant Mackey
functor $\mZ$ on $G=C_{p^{n}}$, the composite of induction and the
restriction to ${\Cp{k}} $ is the fixed point Mackey functor for the
${\Cpn} $-module $\Z[{\Cpn} /{\Cp{k}} ]$: \[ \Ind_{p^{k} }^{p^{n}
}\Res_{p^{k} }^{p^{n} }\big(\mZ\big) \cong \m{\Z[{\Cpn} /{\Cp{k}} ]}.
\]
As such, it is very easy to describe maps out of this Mackey functor
in the category of $\mZ$-modules:
\[
\Hom_{\mZ}(\Ind_{p^{k} }^{p^{n} }\Res_{p^{k} }^{p^{n} }(\mZ),\mM)
  \cong\Hom_{\downarrow_{{p^{k}}}^{{p^{n}}}\mZ}(\Res_{{p^{k}}}^{{p^{n}}}\mZ,
                   \Res_{p^{k} }^{p^{n} }\mM)
       \cong \mM({\Cpn} /{\Cp{k}} )
\]
for a Mackey functor $\mM$ on $G$.  In particular, these
are all projective objects in the category of $\mZ$-modules.

These observations determine the maps in the following proposition.

\begin{prop}\label{prop-2.3}
The sequence
\[
0\to\mZ\to\m{\Z[{\Cpn} /{\Cp{k}} ]}\xrightarrow{1-\gamma}
    \m{\Z[{\Cpn} /{\Cp{k}} ]}\to
        \mZ(n,k)\to 0
\]
is exact, where $\gamma$ denotes a generator of ${\Cpn} $. The map
\[
\m{\Z[{\Cpn} /{\Cp{k}} ]}\to\mZ(n,k)
\]
is left-adjoint to the identity map from $\mZ$ to the restriction to
${\Cp{k}} $ of $\mZ(n,k)$.

The first three terms are a projective resolution of $\mZ(n,k)$ in the
category of $\mZ$-modules.
\end{prop}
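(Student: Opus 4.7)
The plan is to verify exactness pointwise: at each orbit $\Cpn/\Cp{m}$ the sequence should reduce to the classical $2$-periodic projective resolution of $\Z$ over $\Z[\Cp{r}]$, whose exactness is elementary. The work lies in identifying each map at every orbit so that this reduction is manifest.

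First I would pin down the outer maps. The unit $\mZ\to\m{\Z[\Cpn/\Cp{k}]}$ is determined by the $\Cpn$-invariant element $N=\sum_{g\in\Cpn/\Cp{k}}g\in\Z[\Cpn/\Cp{k}]$, and, by compatibility with restriction, sends $1\mapsto N$ at every orbit. By the displayed adjunction, the counit $\m{\Z[\Cpn/\Cp{k}]}\to\mZ(n,k)$ is determined by the element $1\in\mZ(n,k)(\Cpn/\Cp{k})$, so at the orbit $\Cpn/\Cp{k}$ it is the augmentation sending each coset to $1$.

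Next I would evaluate everything at a general orbit $\Cpn/\Cp{m}$. Because subgroups of $\Cpn$ are linearly ordered, $(\Z[\Cpn/\Cp{k}])^{\Cp{m}}$ is free on the $\Cp{m}$-orbit sums and has rank $p^{n-\max(k,m)}$; the Weyl group $\Cpn/\Cp{m}$ acts through its quotient $\Cp{n-\max(k,m)}$ by cyclic permutation of these orbit sums, so as a Weyl-module it is the regular module $\Z[\Cp{r}]$ with $r=n-\max(k,m)$. Using compatibility with the structure maps of $\mZ(n,k)$---whose adjacent restrictions equal $1$ for $s<k$ and whose adjacent transfers equal $1$ for $k\le s<n$---a short induction propagates the description at $\Cpn/\Cp{k}$ to every level: the counit is the augmentation $\epsilon:\Z[\Cp{r}]\to\Z$, and the unit is inclusion of the norm.

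Consequently each orbit produces
\[
0\to\Z\xrightarrow{N}\Z[\Cp{r}]\xrightarrow{1-\gamma}\Z[\Cp{r}]\xrightarrow{\epsilon}\Z\to0,
\]
the standard periodic resolution, and exactness of a sequence of Mackey functors is tested orbitwise. Projectivity of $\m{\Z[\Cpn/\Cp{k}]}=\Ind_{p^k}^{p^n}\Res_{p^k}^{p^n}\mZ$ in the category of $\mZ$-modules is immediate from the displayed adjunction, since evaluation $\mM\mapsto\mM(\Cpn/\Cp{k})$ is an exact functor; so the first three terms form a length-two projective resolution of $\mZ(n,k)$. The main obstacle is the bookkeeping at the boundary level $s=k$, where $\mZ(n,k)$ transitions from constant-like ($\res=1$, $\tr=p$) to dual-like ($\res=p$, $\tr=1$) and one must confirm that the augmentation and norm descriptions of the counit and unit remain compatible with these nontrivial structure maps.
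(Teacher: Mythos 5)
Your proof is correct and follows essentially the same strategy as the paper's: both evaluate the sequence orbit-by-orbit, recognize the middle two terms at each level as the regular representation of the quotient cyclic group with $\gamma$ acting by cyclic permutation, and thereby identify the kernel and cokernel of $1-\gamma$ with $\mZ$ and $\mZ(n,k)$ respectively (with projectivity coming from the same adjunction). Your packaging of the orbitwise computation as the standard $2$-periodic resolution of $\Z$ over $\Z[\Cp{r}]$, and your more explicit tracking of the unit/counit via the norm and augmentation, is a slightly tidier formulation of the same underlying argument.
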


\begin{proof}
It is obvious that $\mZ$ is the kernel of the map $1-\gamma$. For
subgroups of ${\Cp{k}} $, the fixed point Mackey functors are a direct
sum of copies of $\mZ$, and $\gamma$ acts by permuting the
summands. Thus the quotient Mackey functor is also the constant Mackey
functor $\mZ$ for subgroups of ${\Cp{k}} $.

When we look at subgroups of ${\Cpn} $ which contain ${\Cp{k}} $ then
we are actually looking at the Mackey functor for the integral regular
representation of the group ${\Cpn} /{\Cp{k}} $. The fixed points are
given by various transfers, and the fixed points are the obvious
inclusions. Passing to the quotient by $(1-\gamma)$ then sets all
these inclusion maps to multiplication by the index. The transfer maps
are the identity.
\end{proof}

\noindent A useful way to understand these is that we have for
${\Cp{k}} $ the value $\Z^{p^{n-k}}$. The restriction maps are
diagonals, and the transfers are fold maps. Then the statement about
passing to a quotient is obvious.

\subsection{Forms of \texorpdfstring{$\mB$}{B}}

The Hom groups between various $\mZ(k,j)$ are all easy to work out
(the associated $Hom$ groups are all $\Z$). We shall encounter maps
from $\mZ(k,j)$ to $\mZ$, and it is not difficult to see that the maps
are parameterized by where the element $1$ in $\mZ(k,j)(G/e)$ goes.

\begin{definition}
Let $\mB_{k,j}$ denote the quotient Mackey functor associated to the
inclusion $\mZ(k+j,j)\to \mZ$ which is an isomorphism when evaluated on
$G/e$.  Equivalently, $\mB_{k,j}$ is the quotient of the unique map
\[
\mZ(k+j,0)\to \mZ(j,0)
\]
which is the identity when restricted to $\Cp{j}$.

Let $\mB_{k,j}^{*}$ be the quotient of the unique map
\[
\mZ(n,j)\to \mZ(n,k+j)
\]
which is the identity when restricted to $\Cp{j}$.
\end{definition}

It will be helpful to allow $k=0$ in the above definition, in which
case $\mB_{0,j}$ is the zero Mackey functor.

The Mackey functor $\mB_{k,j}$ is simple to describe: 
\[
\mB_{k,j}({\Cpn} /{\Cp{m}} )=\begin{cases}
\Z/p^{k} & m \geq k+j, \\
\Z/p^{m-j} & j < m < k+j, \\
0          & m \leq j.
\end{cases}
\]
All restriction maps are the canonical quotients, and the transfers
are multiplication by $p$.  Thus $j$ refers to the largest subgroup
for which $\mB_{k,j}$ restricts to $0$, while $k$ indicates the
order of the maximal $p$-torsion present.

Here is a partial Lewis diagram illustrating the definitions of
$\mB_{k,j}$ and $\mB_{k,j}^{*}$ for $G=C_{p^{n}}$, where $m
=n-k-j$, $0\leq s\leq k$ and $s'=k-s$.  The horizontal sequences are
short exact.

\begin{displaymath}
\xymatrix
@R=3mm
@!C=10mm
{
{\mM }
    &{\mZ (k+j,0)}\ar[r]
        &{\mZ (j,0)}\ar[r]
            &{\mB_{k,j}}
                &{\mB_{k,j}^{*}}
                    &{\mZ (n,k+j)}\ar[l]
                        &{\mZ (n,j)}\ar[l]\\
{\mM (G/G)}
    &\Z \ar[r]^(.5){p^{k}} \ar@/_.75pc/[d]_(.5){1}
        &\Z \ar[r]\ar@/_.75pc/[d]_(.5){1}
            &\Z/p^{k} \ar@/_.75pc/[d]_(.5){1}
                &\Z/p^{k} \ar@/_.75pc/[d]_(.5){p^{m}}
                    &\Z \ar[l]\ar@/_.75pc/[d]_(.5){p^{m}}
                        &\Z \ar[l]_(.5){p^{k}}\ar@/_.75pc/[d]_(.5){p^{m}}
\\
{\mM (G/C_{p^{k+j}})}
    &\Z \ar[r]^(.5){p^{k}}\ar@/_.75pc/[u]_(.5){p^{m}}
                          \ar@/_.75pc/[d]_(.5){p^{s'}}
        &\Z \ar[r]\ar@/_.75pc/[u]_(.5){p^{m}}\ar@/_.75pc/[d]_(.5){1}
            &\Z/p^{k} \ar@/_.75pc/[u]_(.5){p^{m}}\ar@/_.75pc/[d]_(.5){1}
                &\Z/p^{k} \ar@/_.75pc/[u]_(.5){1}\ar@/_.75pc/[d]_(.5){1}
                    &\Z \ar[l]\ar@/_.75pc/[u]_(.5){1}
                                     \ar@/_.75pc/[d]_(.5){1}
                        &\Z \ar[l]_(.5){p^{k}}\ar@/_.75pc/[u]_(.5){1}
                                              \ar@/_.75pc/[d]_(.5){p^{s'}}
\\
{\mM (G/C_{p^{j+s}})}
    &\Z \ar[r]^(.5){p^{s}}\ar@/_.75pc/[u]_(.5){1}\ar@/_.75pc/[d]_(.5){p^{s}}
        &\Z \ar[r]\ar@/_.75pc/[u]_(.5){p^{s'}}\ar@/_.75pc/[d]_(.5){1}
            &\Z/p^{s} \ar@/_.75pc/[u]_(.5){p^{s'}}\ar@/_.75pc/[d]_(.5){}
                &\Z/p^{s} \ar@/_.75pc/[u]_(.5){p^{s'}}\ar@/_.75pc/[d]_(.5){}
                    &\Z \ar[l]\ar@/_.75pc/[u]_(.5){p^{s'}}
                                     \ar@/_.75pc/[d]_(.5){1}
                        &\Z \ar[l]_(.5){p^{s}}\ar@/_.75pc/[u]_(.5){1}
                                          \ar@/_.75pc/[d]_(.5){p^{s}}
\\
{\mM (G/C_{p^{j}})}
    &\Z \ar[r]^(.5){1}\ar@/_.75pc/[u]_(.5){1}\ar@/_.75pc/[d]_(.5){p^{j}}
        &\Z \ar[r]\ar@/_.75pc/[u]_(.5){p^{s}}
                         \ar@/_.75pc/[d]_(.5){p^{j}}
            &0 \ar@/_.75pc/[u]_(.5){}\ar@/_.75pc/[d]_(.5){}
                &0 \ar@/_.75pc/[u]_(.5){}\ar@/_.75pc/[d]_(.5){}
                    &\Z \ar[l]\ar@/_.75pc/[u]_(.5){p^{s}}
                                     \ar@/_.75pc/[d]_(.5){1}
                        &\Z \ar[l]_(.5){1}\ar@/_.75pc/[u]_(.5){1}
                                          \ar@/_.75pc/[d]_(.5){1}
\\
{\mM (G/e)}
    &\Z \ar[r]^(.5){1}\ar@/_.75pc/[u]_(.5){1}
        &\Z \ar[r]\ar@/_.75pc/[u]_(.5){1}
            &0 \ar@/_.75pc/[u]_(.5){}
                &0\ar@/_.75pc/[u]_(.5){}
                    &\Z \ar[l]\ar@/_.75pc/[u]_(.5){p^{j}}
                        &\Z \ar[l]_(.5){1}\ar@/_.75pc/[u]_(.5){p^{j}}
}
\end{displaymath}

We will also need the following hybrid of $\mB_{k,j}$ and $\mB_{k,j}^{*}$.

\begin{defn}
For $0\leq \ell$, let $\mB_{k,j}^{\ell}$ be the Mackey functor which agrees with
$\mB_{k,j}$ when restricted to $\Cp{\ell+k+j}$ and which for groups
containing $\Cp{\ell+k+j}$, agrees with $\mB_{k,j}^{\ast}$.

For $-k\leq \ell\leq 0$, let $\mB_{k,j}^{\ell}$ be $\mB_{k+\ell,j}$.
\end{defn}

In other words, $\mB_{k,j}^{\ell}$ becomes the dual once we reach
$C_{p^{n}}/\Cp{\ell+k+j}$ while moving up the Lewis diagram. In particular,
\begin{displaymath}
\mB_{k,j}^{0}= \mB_{k,j}^{*} 
\qquad \mbox{and}\qquad  
\mB_{k,j}^{m}= \mB_{k,j},
\end{displaymath}

\noindent where again $m=n-k-j$. For $0<\ell <m$, $\mB_{k,j}^{\ell
}$ is the quotient of a map between $\Z$-valued Mackey functors, but
not between the ones defined above.  Here is another partial Lewis
diagram illustrating this definition.

\begin{displaymath}
\xymatrix
@R=3mm
@!C=20mm
{{\mM }
    &{\mB_{k,j}=\mB_{k,j}^{m}}
        &{\mB_{k,j}^{\ell }}
            &{\mB_{k,j}^{*}=\mB_{k,j}^{0}}\\
{\mM (G/G)}
    &\Z/p^{k} \ar@/_1.5pc/[d]_(.5){1}
        &\Z/p^{k} \ar@/_1.5pc/[d]_(.5){p^{m-\ell }}
            &\Z/p^{k} \ar@/_1.5pc/[d]_(.5){p^{m-\ell }}\\
{\mM (G/C_{p^{\ell +k+j}})}
    &\Z/p^{k} \ar@/_1.5pc/[u]_(.5){p^{m-\ell }}\ar@/_1.5pc/[d]_(.5){1}
        &\Z/p^{k} \ar@/_1.5pc/[u]_(.5){p^{\ell }}\ar@/_1.5pc/[d]_(.5){1}
            &\Z/p^{k} \ar@/_1.5pc/[u]_(.5){1}\ar@/_1.5pc/[d]_(.5){p^{\ell }}\\
{\mM (G/C_{p^{k+j}})}
    &\Z/p^{k} \ar@/_1.5pc/[u]_(.5){p^{\ell }}\ar@/_1.5pc/[d]_(.5){1}
        &\Z/p^{k} \ar@/_1.5pc/[u]_(.5){1}\ar@/_1.5pc/[d]_(.5){1}
            &\Z/p^{k} \ar@/_1.5pc/[u]_(.5){1}\ar@/_1.5pc/[d]_(.5){1}\\
{\mM (G/C_{p^{j+s}})}
    &\Z/p^{s} \ar@/_1.5pc/[u]_(.5){p^{k-s}}\ar@/_1.5pc/[d]_(.5){}
        &\Z/p^{s} \ar@/_1.5pc/[u]_(.5){p^{k-s}}\ar@/_1.5pc/[d]_(.5){}
            &\Z/p^{s} \ar@/_1.5pc/[u]_(.5){p^{k-s}}\ar@/_1.5pc/[d]_(.5){}\\
{\mM (G/C_{p^{j}})}
    &0 \ar@/_1.5pc/[u]_(.5){}\ar@/_1.5pc/[d]_(.5){}
        &0 \ar@/_1.5pc/[u]_(.5){}\ar@/_1.5pc/[d]_(.5){}
            &0 \ar@/_1.5pc/[u]_(.5){}\ar@/_1.5pc/[d]_(.5){}\\
{\mM (G/e)}
   &0 \ar@/_1.5pc/[u]_(.5){}
        &0 \ar@/_1.5pc/[u]_(.5){}
            &0\ar@/_1.5pc/[u]_(.5){}
}
\end{displaymath}

In our analysis of $S^{k\lambda}\wedge H\mZ$, we will only need those
$\mB_{k,j}$ with $j=0$. To simplify notation, let $\mB_{k}$ be the
Mackey functor $\mB_{k,0}$.

\section{The \texorpdfstring{$RO(\Cp{n})$}{RO(G)}-graded homotopy of
 \texorpdfstring{$H\mZ$}{HZ}}

\subsection{Realizing forms of \texorpdfstring{$\mZ$}{Z}  topologically}
A nice feature of the Mackey functor $\mZ(k,j)$ is
that $H\mZ(k,j)$ is a virtual representation sphere smashed with
$H\mZ$. We will see this via a direct chain complex computation.

\begin{thm}\label{thm:FormofZTop}
For $j<k$, we have an equivalence
\[
\Sigma^{\lambda_{k}-\lambda_{j}}H\mZ\simeq H\mZ(k,j).
\]
\end{thm}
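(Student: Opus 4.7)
The plan is to prove $\Sigma^{\lambda_k-\lambda_j}H\mZ \simeq H\mZ(k,j)$ by rewriting it as the equivalence
\[
S^{\lambda_k}\wedge H\mZ \simeq S^{\lambda_j}\wedge H\mZ(k,j)
\]
of positive-dimensional $G$-CW spectra (cells in dimensions $0,1,2$), and computing the Mackey functor homology of both sides via their cellular chain complexes.

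For the left hand side, Proposition~\ref{prop:CellStructure} gives $S^{\lambda_k}$ a cell structure with one fixed $0$-cell and two orbits of cells of stabilizer $C_{p^k}$. After smashing with $H\mZ$, the reduced cellular chain complex of Mackey functors takes the form
\[
\m{\Z[\Cpn/\Cp{k}]} \xrightarrow{1-\gamma} \m{\Z[\Cpn/\Cp{k}]} \xrightarrow{\epsilon} \mZ,
\]
with $\epsilon$ the augmentation coming from the attaching maps. By Proposition~\ref{prop-2.3}, the first two terms are the tail of the projective resolution of $\mZ(n,k)$, so $\epsilon$ factors as $\m{\Z[\Cpn/\Cp{k}]}\twoheadrightarrow\mZ(n,k)\to\mZ$, where the last map is the unique Mackey functor map restricting to the identity on $G/e$. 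A direct Lewis-diagram check shows this last map is injective with cokernel $\mB_{n-k,k}$, yielding Mackey homology $\mZ,\,0,\,\mB_{n-k,k}$ in degrees $2,1,0$.

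For the right hand side, the key input is that by construction $\Res_{p^j}^{p^n}\mZ(k,j)$ is the constant Mackey functor $\mZ$ on $C_{p^j}$. Combined with the identification $\Ind_{p^j}^{p^n}\mZ=\m{\Z[\Cpn/\Cp{j}]}$ noted before Proposition~\ref{prop-2.3}, the reduced cellular chain complex of $S^{\lambda_j}\wedge H\mZ(k,j)$ takes the parallel form
\[
\m{\Z[\Cpn/\Cp{j}]} \xrightarrow{1-\gamma} \m{\Z[\Cpn/\Cp{j}]} \xrightarrow{\epsilon'} \mZ(k,j).
\]
The same analysis applies: $\epsilon'$ factors as $\m{\Z[\Cpn/\Cp{j}]}\twoheadrightarrow\mZ(n,j)\to\mZ(k,j)$, where the last map is identity on $C_{p^j}$, and a second Lewis-diagram check shows this map is again injective with cokernel $\mB_{n-k,k}$. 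Thus the Mackey homology on this side is also $\mZ,\,0,\,\mB_{n-k,k}$ in degrees $2,1,0$.

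The main obstacle will be promoting this matching of homotopy Mackey functors into an actual equivalence of spectra. My plan is to construct a direct comparison map realizing the chain-level identification --- in particular using the canonical $H\mZ$-module structure on both sides and the naturality properties of the projective resolutions above --- and then verify it induces an isomorphism on $\underline{\pi}_\ast$. The delicate step is tracking how the comparison interacts simultaneously with the $(1-\gamma)$ differentials and with the two different projective resolutions (of $\mZ(n,j)$ and $\mZ(n,k)$); once this is in place, a Whitehead-type argument in the category of $H\mZ$-modules concludes the proof.
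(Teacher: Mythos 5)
Your homology computations are correct on both sides: the reduced cellular chain complexes you write down are right, the augmentation does factor through $\mZ(n,k)$ (resp.\ $\mZ(n,j)$) by Proposition~\ref{prop-2.3}, the restricted Mackey functor $\Res_{p^j}^{p^n}\mZ(k,j)$ is indeed constant, and both sides end up with Mackey homology $\mZ$, $0$, $\mB_{n-k,k}$ in degrees $2$, $1$, $0$.  But the step you flag as ``the main obstacle'' is a genuine gap, not a technicality, and it is created by your initial rewrite.  Once you unsuspend to $S^{\lambda_k}\wedge H\mZ \simeq S^{\lambda_j}\wedge H\mZ(k,j)$, each side has homotopy in \emph{two} degrees, and matching homotopy Mackey functors no longer determines the spectrum: one also has to match the Postnikov $k$-invariant linking $\Sigma^2 H\mZ$ and $H\mB_{n-k,k}$.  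Even if you produce a map inducing an isomorphism on $\m{\pi}_2$ (the easy one to arrange), there is no reason it should automatically be an isomorphism on $\m{\pi}_0$; you would have to analyze the $k$-invariants or track a hidden extension, and the proposal does not do this.

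The paper's proof sidesteps this entirely by \emph{not} unsuspending.  It builds a cell structure on the $0$-dimensional virtual sphere $S^{\lambda_k-\lambda_j}$ directly (smashing the cell structure on $S^{\lambda_k}$ with the dual cell structure on $S^{-\lambda_j}$, using that $\lambda_k$ restricts trivially to $\Cp{j}$ so that cells induced from $\Cp{j}$ do not see $\lambda_k$), smashes with $H\mZ$, and computes the resulting chain complex of Mackey functors.  After stripping off the visibly acyclic part, the homology reduces to the pushout of $\mZ \leftarrow \mZ(n,k) \rightarrow \mZ(n,j)$, which one identifies level-by-level with $\mZ(k,j)$, sitting in a \emph{single} degree.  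A $G$-spectrum with $\m{\pi}_\ast$ concentrated in degree $0$ is automatically the Eilenberg--MacLane spectrum on its $\m{\pi}_0$, so the equivalence drops out with no comparison map and no $k$-invariant analysis.  If you want to salvage your route, the missing input is precisely the single-degree computation the paper does; the cleanest fix is to keep the virtual sphere rather than splitting it into $S^{\lambda_k}$ and $S^{\lambda_j}$ factors.
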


\begin{proof}
We use the simplified cell structures described above. If we smash
$S^{\lambda_{k}}$ with the cell structure for $S^{-\lambda_{j}}$, then
we have a complex of the form
\[
 \big({\Cpn} /{\Cp{j}}\big)_{+}\wedge 
          S^{0}\cup  \big({\Cpn} /{\Cp{j}}\big)_{+}\wedge 
             e^{1}\cup e^{0}\wedge S^{\lambda_{k}}.
\]
Smashing with $H\mZ$ and applying $\underline{\pi_{\ast}}$ gives a
chain complex of Mackey functors

\[
\xymatrix
@R=5mm
@C=30mm
{
{\m{\Z[{\Cpn} /{\Cp{k}} ]}}\ar[d]_{1-\gamma}
         \ar[dr]^{\Ind_{\cp{k}}^{\cp{n}}\res_{\cp{j}}^{\cp{k}}} 
    & {} \\
{\m{\Z[{\Cpn} /{\Cp{k}} ]}}
         \ar[d]\ar[dr]_{-{\Ind_{\cp{k}}^{\cp{n}}\res_{\cp{j}}^{\cp{k}}}} 
    & {\m{\Z[{\Cpn} /{\Cp{j}} ]}}\ar[d]^{1-\gamma} \\
{\mZ} 
    & {\m{\Z[{\Cpn} /{\Cp{j}} ]}}}
\]
The map labeled ${\Ind_{\cp{k}}^{\cp{n}}\res_{\cp{j}}^{\cp{k}}}$ is
actually the canonical inclusion of fixed points for the
subgroups. Thus there is no homology in the top degree. Similarly, the
kernel of the left most $1-\gamma$ is $\mZ$, which maps isomorphically to
the kernel of the second $1-\gamma$. Thus the homology of our chain
complex is the homology of the simpler complex:
\[
\xymatrix
@R=5mm
@C=30mm
{
{\mZ(n,k)}\ar[d]\ar[dr]^{-{\Ind_{\cp{k}}^{\cp{n}}\res_{\cp{j}}^{\cp{k}}}} 
      & {}\\
{\mZ} & {\mZ(n,j)}}.
\]
Now we make a few observations. For subgroups of ${\Cp{k}} $, the map 
\[
\mZ(n,k)\to\mZ
\]
is an isomorphism, and therefore the pushout looks like
$\mZ(n,j)$. For subgroups of ${\Cpn} $ which contain ${\Cp{k}} $, the
map
\[
\mZ(n,k)\to\mZ(n,j)
\]
is an equivalence, and therefore the pushout looks like $\mZ$. This is
the description of $\mZ(k,j)$.
\end{proof}
 
\begin{cor}
As a Mackey functor,
\[
\m{\pi}_{\lambda_{k}}S^{\lambda_{j}}\wedge H\mZ=\mZ(k,j).
\]
\end{cor}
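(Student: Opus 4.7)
The plan is to deduce this directly from Theorem~\ref{thm:FormofZTop}; the corollary is essentially just a restatement of that equivalence at the level of Mackey-functor $\pi_0$. First, I would unwind the definition of the $RO(G)$-graded homotopy Mackey functor. By construction, $\underline{\pi}_{\lambda_k}(S^{\lambda_j}\wedge H\mZ)$ sends $G/H$ to $[S^{\lambda_k},\, S^{\lambda_j}\wedge H\mZ]^H$. Applying the standard suspension-desuspension identity to trade the $\lambda_k$-indexing of the source sphere against the $\lambda_j$-suspension in the target, this group rewrites as
\[
[S^0,\, \Sigma^{\lambda_k-\lambda_j}H\mZ]^H \;=\; \underline{\pi}_0\bigl(\Sigma^{\lambda_k-\lambda_j}H\mZ\bigr)(G/H).
\]

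Next I would invoke Theorem~\ref{thm:FormofZTop} directly: since $j<k$, it produces an equivalence $\Sigma^{\lambda_k-\lambda_j}H\mZ\simeq H\mZ(k,j)$ of $G$-spectra, and the target is, by definition, an Eilenberg--MacLane spectrum whose only non-vanishing Mackey-functor homotopy group is $\underline{\pi}_0 = \mZ(k,j)$. Combining the two steps gives
\[
\underline{\pi}_{\lambda_k}(S^{\lambda_j}\wedge H\mZ) \;\cong\; \underline{\pi}_0\bigl(H\mZ(k,j)\bigr) \;=\; \mZ(k,j),
\]
which is the claimed identification.

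There is no real obstacle: the substantive content, namely the explicit chain-complex-of-Mackey-functors computation that identifies the virtual suspension of $H\mZ$ with an Eilenberg--MacLane spectrum for a specific form of $\mZ$, was already carried out in the proof of Theorem~\ref{thm:FormofZTop}. The corollary is the $\pi_0$-shadow of that equivalence, repackaged as a statement about the $RO(\Cpn)$-graded homotopy of $S^{\lambda_j}\wedge H\mZ$.
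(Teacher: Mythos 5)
Your desuspension step contains a sign error, and it is precisely this sign error that makes the argument appear to close. Starting from $[S^{\lambda_k},\, S^{\lambda_j}\wedge H\mZ]^H$ and moving $S^{\lambda_k}$ across, you get
\[
[S^{\lambda_k},\, S^{\lambda_j}\wedge H\mZ]^H \;=\; [S^0,\, S^{-\lambda_k}\wedge S^{\lambda_j}\wedge H\mZ]^H \;=\; \underline{\pi}_0\bigl(\Sigma^{\lambda_j-\lambda_k}H\mZ\bigr)(G/H),
\]
with $\lambda_j-\lambda_k$ in the exponent, not $\lambda_k-\lambda_j$. Theorem~\ref{thm:FormofZTop} identifies $\Sigma^{\lambda_k-\lambda_j}H\mZ\simeq H\mZ(k,j)$ --- the \emph{other} suspension --- so it does not apply to the spectrum that your (correctly done) desuspension would produce. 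You silently inverted the sign in the ``trade'' step, and only because of that inversion does the theorem seem to finish the job.

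The two signs really are inequivalent here. The spectrum $\Sigma^{\lambda_j-\lambda_k}H\mZ$ is the $H\mZ$-linear Spanier--Whitehead dual of $\Sigma^{\lambda_k-\lambda_j}H\mZ\simeq H\mZ(k,j)$, so its homotopy is controlled by $\underline{\mathrm{Ext}}^{*}_{\mZ}(\mZ(k,j),\mZ)$, not by $\mZ(k,j)$ itself; one can check directly (e.g.\ for $C_{p^2}$ with $j=0$, $k=1$, using the cell structure of $S^{\lambda-\lambda_1}$) that $\underline{\pi}_0(\Sigma^{\lambda_0-\lambda_1}H\mZ)$ is the constant Mackey functor $\mZ$ rather than $\mZ(1,0)$, so the two cannot be conflated. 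The clean deduction from the theorem is the statement with $j$ and $k$ playing the opposite roles, $\underline{\pi}_{\lambda_j}\bigl(S^{\lambda_k}\wedge H\mZ\bigr)=\mZ(k,j)$, since $[S^{\lambda_j},\, S^{\lambda_k}\wedge H\mZ]^H=\underline{\pi}_0(\Sigma^{\lambda_k-\lambda_j}H\mZ)(G/H)=\mZ(k,j)(G/H)$. If you want to defend the corollary exactly as the paper prints it, you cannot get there by desuspension plus the theorem; you need an additional argument. As written, your chain of equalities has a false link.
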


Using the multiplicative structure of $H\mZ$, we can extend this to a
map of $H\mZ$-module spectra. Let
\[
v_{j/k}\colon S^{\lambda_{k}}\wedge H\mZ\to S^{\lambda_{j}}\wedge H\mZ
\]
be a generator of $\mZ(k,j)(\Cpn/\Cpn)$ which restricts to a chosen
orientation of the underlying $2$-spheres.

Since we are working at an odd prime, all representations of $\Cpn$
are orientable. In the equivariant context, this means that
\[
\m{\pi}_{\dim V} (S^{V}\wedge H\mZ)\cong \m{H}_{\dim V}(S^{V};\mZ)\cong\mZ.
\]
A choice of generator for $\mZ(\Cpn/\Cpn)$ determines one for all
other values of the Mackey functor and, since all restriction and
transfer maps are injective, this is detected in the underlying
homotopy. Let
\[
u_{V}\colon S^{\dim V}\to S^{V}\wedge H\mZ
\]
be such a choice of generator. We will refer to these as ``orientation
classes''. By choosing one for each irreducible representation of
$\Cpn$, we can ensure this is compatible with the multiplicative
structure in the sense that
\[
u_{V\oplus W}=u_{V}\cdot u_{W}.
\]

\begin{prop}
The composite
\[
S^{2}\xrightarrow{u_{\lambda_{k}}}S^{\lambda_{k}}\wedge H\mZ
     \xrightarrow{v_{j/k}} S^{\lambda_{j}}\wedge H\mZ,
\]
is homotopic to $u_{\lambda_{j}}$. Thus
\[
v_{j/k}=\frac{u_{\lambda_{j}}}{u_{\lambda_{k}}}.
\]
\end{prop}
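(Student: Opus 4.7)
The plan is to check the asserted equality after restricting to the underlying non-equivariant spectrum, where it reduces to the tautology that the composition of two orientations of $S^{2}$ is an orientation of $S^{2}$, and then to argue that this underlying equality forces the equivariant one. Thus the proof has two ingredients: identifying the relevant Mackey functor so that restriction to $\Cpn/e$ is injective, and matching the underlying generators.

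First I would place both $v_{j/k}\circ u_{\lambda_{k}}$ and $u_{\lambda_{j}}$ in the common abelian group $\m{\pi}_{2}(S^{\lambda_{j}}\wedge H\mZ)(\Cpn/\Cpn)$, since $\dim\lambda_{j}=2$. Because $p$ is odd, the representation $\lambda_{j}$ is orientable, so by the observation immediately preceding the proposition $\m{\pi}_{2}(S^{\lambda_{j}}\wedge H\mZ)\cong\mZ$ is the constant Mackey functor with value $\Z$, and in particular the iterated restriction to $\Cpn/e$ is the identity map on $\Z$, hence injective. Consequently it suffices to show that the two classes agree after forgetting the $\Cpn$-action.

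Underlying, $u_{\lambda_{k}}$ is by construction a chosen orientation class $S^{2}\to S^{2}\wedge H\Z$, and $v_{j/k}$ restricts by construction to a chosen orientation of the underlying self-map $S^{2}\wedge H\Z\to S^{2}\wedge H\Z$, i.e.\ a unit of $\pi_{0}(H\Z)=\Z$. Hence $v_{j/k}\circ u_{\lambda_{k}}$ underlies a generator of $\pi_{2}(S^{2}\wedge H\Z)\cong\Z$, which is exactly the underlying orientation defining $u_{\lambda_{j}}$. The one genuine obstacle is coordinating signs: one has to check that the separately chosen generators $u_{\lambda_{k}}$, $u_{\lambda_{j}}$, and $v_{j/k}$ can simultaneously be normalized so that the underlying equality is a strict equality, not an equality up to $\pm 1$. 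This is arranged by hand, consistent with the multiplicative normalization $u_{V\oplus W}=u_{V}\cdot u_{W}$, by negating at most one of the chosen generators if needed. Once the signs are coordinated, the injectivity of restriction promotes the underlying equality to the equivariant statement $v_{j/k}\circ u_{\lambda_{k}}=u_{\lambda_{j}}$, which is the content of the proposition.
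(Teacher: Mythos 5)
Your proof is correct and follows the same strategy as the paper: the Mackey functor $\m{\pi}_{2}(S^{\lambda_{j}}\wedge H\mZ)\cong\mZ$ is constant, so restriction to the trivial subgroup is injective, and the equality reduces to comparing underlying orientations of $S^{2}$, where both sides are degree one. The sign-coordination issue you raise at the end is precisely what the paper's phrase \emph{``by assumption''} is invoking: $v_{j/k}$ is \emph{defined} so that its underlying map is the chosen orientation (not merely some unit), and the $u_{V}$ are chosen with respect to the same fixed orientations, so the coherence is built into the definitions rather than imposed by an after-the-fact negation as you suggest; with that reading your argument and the paper's are essentially identical.
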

\begin{proof}
The composite is detected in $\m{\pi}_{2}S^{\lambda_{j}}\wedge
H\mZ\cong\mZ$, and it is determined by the underlying map. By
assumption, this is a map of degree $1$.
\end{proof}

\subsection{The  gold  (or $au$) relation} 
The orientation classes $u_{\lambda_{k}}$ carve out a portion of the
$RO(\Cpn)$-graded homotopy of $H\mZ$. They do not tell a complete
picture, however. We need some classes in the Hurewicz image.

\begin{defn}
For a representation $V$, let 
\[
a_{V}\colon S^{0}\to S^{V}
\]
denote the inclusion of the origin and the point at infinity into $S^{V}$.
\end{defn}

We will often refer to the classes $a_{V}$ as ``Euler classes''. It is
obvious that if $V^{G}\neq \{0\}$, then $a_{V}$ is null-homotopic (as
the map factors through the inclusion $S^{V^{G}}\to S^{V}$). It
follows that the restriction of $a_{V}$ to any subgroup for which
$V^{H}\neq\{0\}$ is also null-homotopic. For groups for which
$V^{H}=\{0\}$, the class $a_{V}$ and all of its suspensions and powers
are essential. We will also let $a_{V}$ denote the Hurewicz image. Our
chain complex models show that this generates $H_{0}(S^{V};\mZ)$.

The classes $a_{\lambda_{k}}$ can be generalized in an interesting
way. If $j<k<n$, there are unstable maps
\[
\frac{a_{\lambda_{k}}}{a_{\lambda_{j}}}\colon S^{\lambda_{j}} 
   \to S^{\lambda_{k}}.
\]
This can be formed most easily by observing that the $p^{k-j}$th power
map extends to the one point compactifications. 

Composing with the Euler class $a_{\lambda_{j}}$ obviously results
in $S^{\lambda_{k}}$, and by our assumptions on $k$, this map is
essential (and in fact, torsion free).

Smashing this with $H\mZ$ produces an essential map which we will also
call by the same name
\[
\frac{a_{\lambda_{k}}}{a_{\lambda_{j}}}\colon S^{\lambda_{j}}\wedge
H\mZ\to S^{\lambda_{k}}\wedge H\mZ.
\]

Our chain complex models show that the classes $u_{\lambda_{0}}$,
$\dots$, $u_{\lambda_{n-1}}$ and $a_{\lambda_{0}}$, $\dots$,
$a_{\lambda_{n-1}}$ generate as an algebra the portion of the
$RO(\Cpn)$-graded homotopy of $H\mZ$ in dimensions of the form
$k-V$. There is a relation which is immediate from our fractional
classes.

\begin{thm}
In the $RO(\Cpn)$-graded homotopy of $H\mZ$, we have a relation
\[
a_{\lambda_{k}}u_{\lambda_{j}}=p^{k-j}a_{\lambda_{j}}u_{\lambda_{k}}.
\]
\end{thm}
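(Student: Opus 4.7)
The approach is to leverage the fractional Euler class $\tfrac{a_{\lambda_{k}}}{a_{\lambda_{j}}}\colon S^{\lambda_{j}}\to S^{\lambda_{k}}$ constructed just above. By its very definition this class satisfies
\[
a_{\lambda_{j}}\cdot\tfrac{a_{\lambda_{k}}}{a_{\lambda_{j}}}=a_{\lambda_{k}}
\]
in the $RO(\Cpn)$-graded ring $\pi_{\star}^{\Cpn}H\mZ$. If I can establish the companion identity
\[
\tfrac{a_{\lambda_{k}}}{a_{\lambda_{j}}}\cdot u_{\lambda_{j}}=p^{k-j}\,u_{\lambda_{k}}
\]
inside $\m{\pi}_{2}(S^{\lambda_{k}}\wedge H\mZ)$, then multiplying through by $a_{\lambda_{j}}$ and invoking the defining relation of the fractional class yields the theorem immediately.

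To prove the companion identity, I exploit the fact that $\m{\pi}_{2}(S^{\lambda_{k}}\wedge H\mZ)$ is the top-dimensional homotopy of the $2$-dimensional representation sphere $S^{\lambda_{k}}$ smashed with $H\mZ$, hence a $\Z$-valued Mackey functor. As emphasized in the paragraph introducing the orientation classes $u_{V}$, at the odd prime $p$ its restriction and transfer maps are all injective; in particular, restriction down to $\Cpn/e$ is injective and the identity can be checked non-equivariantly. On the underlying, $S^{\lambda_{j}}$ and $S^{\lambda_{k}}$ are both copies of $S^{2}$, the orientation classes $u_{\lambda_{j}}$ and $u_{\lambda_{k}}$ are compatibly chosen generators of $\pi_{2}(S^{2}\wedge H\Z)\cong\Z$, and $\tfrac{a_{\lambda_{k}}}{a_{\lambda_{j}}}$ is realized by the map $z\mapsto z^{p^{k-j}}$ on $\mathbb{C}\cup\{\infty\}$, which has topological degree $p^{k-j}$. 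The underlying composite is therefore $p^{k-j}$ times the chosen generator, matching the underlying value of $p^{k-j}u_{\lambda_{k}}$.

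The principal technical point to nail down is the injectivity of the restriction of $\m{\pi}_{2}(S^{\lambda_{k}}\wedge H\mZ)$ to the underlying level, since it is this injectivity that legitimizes the non-equivariant verification. This is built into the paper's normalization of the orientation classes and can in any case be read off from Theorem~\ref{thm:FormofZTop} via the equivalence $S^{\lambda_{k}}\wedge H\mZ\simeq S^{\lambda}\wedge H\mZ(k,0)$, combined with the explicit Lewis diagrams for $\mZ(k,0)$ recorded in Section~\ref{sec-drama}. With that in hand, the rest of the argument is a one-line degree computation followed by formal multiplication in the $RO(\Cpn)$-graded ring.
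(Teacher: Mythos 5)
Your proposal is correct and follows essentially the same argument as the paper: both reduce to the observation that the fractional class $a_{\lambda_k}/a_{\lambda_j}$ has underlying degree $p^{k-j}$, check this in a $\Z$-valued Mackey functor with injective restrictions where the underlying map determines everything, and then clear denominators by multiplying by $a_{\lambda_j}$. The paper composes $a_{\lambda_k}/a_{\lambda_j}$ with $u_{\lambda_j}/u_{\lambda_k}$ to land in an endomorphism of $S^{\lambda_j}\wedge H\mZ$ rather than directly identifying $(a_{\lambda_k}/a_{\lambda_j})\cdot u_{\lambda_j}$ in $\m{\pi}_{2}(S^{\lambda_k}\wedge H\mZ)$, but these are just two equivalent bookkeepings of the same degree computation.
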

\begin{proof}
If we compose $a_{\lambda_{k}}/a_{\lambda_{j}}$ with the map
$u_{\lambda_{j}}/u_{\lambda_{k}}$, then we get a map
\[
\frac{u_{\lambda_{j}}}{u_{\lambda_{k}}}
\frac{a_{\lambda_{k}}}{a_{\lambda_{j}}}\colon S^{\lambda_{j}}\wedge
H\mZ\to S^{\lambda_{j}}\wedge H\mZ.
\]
This map is completely determined by the effect in underlying
homotopy, and this is obviously the map of degree $p^{k-j}$. Clearing
denominators gives the desired result.
\end{proof}

\begin{remark}
There is only one other kind of relation in this sector of the\linebreak 
$RO(\Cpn)$-graded homotopy:
\[
p^{n-j}a_{\lambda_{j}}=0.
\]
This follows from the above remarks, as the restriction to $\Cp{j}$ of
$a_{\lambda_{j}}$ is null-homotopic. Composing with the transfer
yields multiplication by $p^{n-j}$ in homology. The cellular models
for representation spheres show that no other relations are possible:
all homology groups
\[
H_{k-V}(S^{0};\mZ)
\]
are cyclic.
\end{remark}

\subsection{Fiber sequences realizing \texorpdfstring{$H\mB_{k,j}$}{HB}.}

If we apply the {\EM} functor to the short exact sequence
\[
\mZ(k+j,j)\to \mZ\to \mB_{k,j},
\]
then we get a fiber sequence of spectra. Combing this with equivalence
from Theorem~\ref{thm:FormofZTop} yields a fiber sequence
\[
\Sigma^{\lambda_{k}-\lambda_{j}}H\mZ\to H\mZ\to H\mB_{k,j}.
\]

Since $\mB_{k,j}$ is zero on cells induced up from subgroups of
${\Cp{j}} $, for all $i\leq j$, the map $a_{\lambda_{i}}$ induces an
equivalence
\[
H\mB_{k,j}\simeq \Sigma^{\lambda_{i}}H\mB_{k,j}.
\]

Smashing our fiber sequence with $S^{\lambda_{j}}$ then
gives a more readily interpreted form.

\begin{prop}\label{prop:FiberSequence}
There is a fiber sequence
\[
\Sigma^{\lambda_{k}}H\mZ\to\Sigma^{\lambda_{j}}H\mZ\to H\mB_{k,j}.
\]
\end{prop}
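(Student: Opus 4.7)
The plan is to read off the result by smashing the previously constructed fiber sequence
\[
\Sigma^{\lambda_{k}-\lambda_{j}}H\mZ \to H\mZ \to H\mB_{k,j}
\]
---obtained by feeding the short exact sequence $\mZ(k+j,j)\to\mZ\to\mB_{k,j}$ into the Eilenberg-MacLane functor and invoking Theorem~\ref{thm:FormofZTop} to identify the leftmost term---with the representation sphere $S^{\lambda_{j}}$. Since $S^{\lambda_{j}}$ is invertible in the equivariant stable category, smashing preserves fiber sequences, so this immediately produces
\[
\Sigma^{\lambda_{k}}H\mZ \to \Sigma^{\lambda_{j}}H\mZ \to \Sigma^{\lambda_{j}}H\mB_{k,j}.
\]

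To finish, I replace the rightmost term using the equivalence $H\mB_{k,j}\simeq \Sigma^{\lambda_{j}}H\mB_{k,j}$ noted in the paragraph just above the proposition, which is induced by the Euler class $a_{\lambda_{j}}$. Precomposing the third map in the sequence with the inverse of this equivalence yields the required map $\Sigma^{\lambda_{j}}H\mZ\to H\mB_{k,j}$ and completes the desired triangle.

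There is essentially no serious obstacle once the preceding machinery is in hand; the proof is a formal manipulation. The one step deserving explicit justification is the $a_{\lambda_{j}}$-equivalence. By the cell decomposition of Proposition~\ref{prop:CellStructure}, the cofiber of $a_{\lambda_{j}}\colon S^{0}\to S^{\lambda_{j}}$ is built from cells of the form $(\Cp{n}/\Cp{j})_{+}\wedge e^{r}$. Smashing with $H\mB_{k,j}$ produces spectra whose equivariant homotopy is controlled by $\mB_{k,j}({\Cpn}/{\Cp{m}})$ for $m\leq j$, and by the explicit description of $\mB_{k,j}$ these all vanish. Hence the cofiber of $a_{\lambda_{j}}\wedge H\mB_{k,j}$ is contractible, confirming the claimed equivalence and completing the proof.
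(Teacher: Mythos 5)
Your proposal is correct and follows the same route the paper takes: smash the fiber sequence $\Sigma^{\lambda_k-\lambda_j}H\mZ\to H\mZ\to H\mB_{k,j}$ (arising from the short exact sequence of Mackey functors and Theorem~\ref{thm:FormofZTop}) with $S^{\lambda_j}$, then use the $a_{\lambda_j}$-equivalence $H\mB_{k,j}\simeq\Sigma^{\lambda_j}H\mB_{k,j}$. Your justification of that equivalence via the cell structure of $S^{\lambda_j}$ and the vanishing of $\mB_{k,j}$ on orbits $\Cp{n}/\Cp{m}$ with $m\leq j$ matches the paper's reasoning exactly.
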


Proposition~\ref{prop:FiberSequence} is the heart of our analysis of
the slice filtration for these\linebreak 
$RO(G)$-graded suspensions: we can strip away copies of
$S^{\lambda_{j}}$, replacing them with copies of $S^{\lambda_{k}}$ for
$k>j$.

Before continuing, we note two more natural fiber sequences which tie
in the dual Mackey functors.

\begin{prop}
We have fiber sequences
\[
\Sigma^{\lambda_k-\lambda_0}H\mZ\to
    \Sigma^{\lambda_j-\lambda_0}H\mZ\to
        \mB_{k,j}
\]
and
\[
\Sigma^{\lambda_n-\lambda_j}H\mZ\to
    \Sigma^{\lambda_n-\lambda_k}H\mZ\to
        \mB_{k,j}^\ast.
\]
\end{prop}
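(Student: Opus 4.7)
The plan is to mirror the proof of Proposition~\ref{prop:FiberSequence}: in each case I apply the Eilenberg-MacLane functor to a short exact sequence of $\mZ$-modules whose cokernel is the target Mackey functor, and then translate the two left-hand terms into suspensions of $H\mZ$ via Theorem~\ref{thm:FormofZTop}.

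For the first fiber sequence I would start from the alternative presentation of $\mB_{k,j}$ given in its definition, namely as the cokernel of the canonical map $\mZ(k,0)\to\mZ(j,0)$ which is the identity after restriction to $\Cp{j}$. Applying $H(-)$ to the short exact sequence
\[
0\to \mZ(k,0)\to \mZ(j,0)\to \mB_{k,j}\to 0
\]
yields a cofiber sequence of spectra. Theorem~\ref{thm:FormofZTop} identifies $H\mZ(k,0)\simeq \Sigma^{\lambda_k-\lambda_0}H\mZ$ and $H\mZ(j,0)\simeq \Sigma^{\lambda_j-\lambda_0}H\mZ$, which produces the first fiber sequence. As a sanity check, one can instead smash the fiber sequence of Proposition~\ref{prop:FiberSequence} with $S^{-\lambda_0}$; the right-hand term becomes $\Sigma^{-\lambda_0}H\mB_{k,j}$, which is equivalent to $H\mB_{k,j}$ because $\mB_{k,j}$ vanishes on $\Cpn/e$, making $a_{\lambda_0}$ act as an equivalence on $H\mB_{k,j}$.

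For the second fiber sequence I would use the defining presentation of $\mB_{k,j}^{\ast}$ as the cokernel of the unique map $\mZ(n,j)\to\mZ(n,k)$ which is the identity on $\Cp{j}$. Applying $H(-)$ to
\[
0\to \mZ(n,j)\to \mZ(n,k)\to \mB_{k,j}^{\ast}\to 0
\]
and identifying $H\mZ(n,j)\simeq \Sigma^{\lambda_n-\lambda_j}H\mZ$ together with $H\mZ(n,k)\simeq \Sigma^{\lambda_n-\lambda_k}H\mZ$ via Theorem~\ref{thm:FormofZTop} gives the second fiber sequence.

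The proofs are entirely parallel to Proposition~\ref{prop:FiberSequence}, so there is no genuine obstacle to overcome. The only mild subtlety is keeping the indexing consistent between the two equivalent presentations of $\mB_{k,j}$ (and of $\mB_{k,j}^{\ast}$), so that the $\lambda$-indices appearing after invoking Theorem~\ref{thm:FormofZTop} match those in the statement; once the correct short exact sequence is selected, the rest is automatic.
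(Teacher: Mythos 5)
Your approach is exactly the right one and is what the paper's argument would be: apply the Eilenberg--Mac Lane functor to a presenting short exact sequence of $\mZ$-modules and invoke Theorem~\ref{thm:FormofZTop}, just as in the derivation of Proposition~\ref{prop:FiberSequence} (the paper itself gives no proof for this statement). However, the short exact sequences you cite do not match the paper's definitions. The paper defines $\mB_{k,j}$ as the cokernel of the map $\mZ(k+j,0)\to\mZ(j,0)$, not of $\mZ(k,0)\to\mZ(j,0)$, and $\mB_{k,j}^{\ast}$ as the cokernel of $\mZ(n,j)\to\mZ(n,k+j)$, not of $\mZ(n,j)\to\mZ(n,k)$. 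Feeding the actual definitions through Theorem~\ref{thm:FormofZTop} produces $\Sigma^{\lambda_{k+j}-\lambda_0}H\mZ$ and $\Sigma^{\lambda_n-\lambda_{k+j}}H\mZ$ rather than the $\Sigma^{\lambda_k-\lambda_0}H\mZ$ and $\Sigma^{\lambda_n-\lambda_k}H\mZ$ appearing in the statement. This off-by-$j$ discrepancy is not one you introduced: it is already present in the paper's derivation of Proposition~\ref{prop:FiberSequence}, where $H\mZ(k+j,j)$ is silently written as $\Sigma^{\lambda_k-\lambda_j}H\mZ$ instead of $\Sigma^{\lambda_{k+j}-\lambda_j}H\mZ$. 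In effect the proposition as stated, and your proof, use the convention that $\mB_{k,j}$ is the cokernel of $\mZ(k,j)\to\mZ$ for $j<k$, which under the paper's own definition would be called $\mB_{k-j,j}$. Because the paper only uses $\mB_{k,0}$ downstream, where both conventions agree, nothing else breaks; but your claim that the sequences you write down are ``given in its definition'' is not literally correct and a careful write-up should either reindex or flag this. Your alternative sanity check, smashing the fiber sequence of Proposition~\ref{prop:FiberSequence} with $S^{-\lambda_0}$ and using that $a_{\lambda_0}$ acts invertibly on $H\mB_{k,j}$, is sound but inherits the same indexing convention.
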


\section{Bredon homology computations}

We will see in the \S\ref{sec-slices} below that each slice for
$s^{\infty \lambda }\wedge H\mZ$ has the smash product of a certain
{\rep} sphere with some $H\mB_{k}$.  The relevant homotopy groups will
be given in Theorem \ref{thm:BredonHomology}. \S\ref{subsec-4.1} gives some
preparatory calculations for this result.  \S\ref{subsec-4.2} gives
some results that will help us determine the slices in
\S\ref{sec-slices}.

\subsection{Bredon homology and \texorpdfstring{$\mB_{k,j}$}{Bkj}}
\label{subsec-4.1}

\subsubsection{Exact Sequences involving $\mB$} 

A small initial observation is needed. The Mackey functors $B_{(k,j}$
do not really depend on $\Cp{n}$ in the sense that once $n$ is at
least $(k+j)$, the value for the Mackey functors on orbits
stabilizes. Thus in the short exact sequences that follow, it is
helpful to pretend they are written as Mackey functors for the
Pr\"ufer group $\Cp{\infty}$ and then restrict the result to any
chosen $\Cp{n}$. The net result is that if any index is greater than
$n$, just set it equal to $n$ in the formulas.

To compute the $E_{2}$-page of the slice spectral sequence for $S^{m\lambda }$,
we need to determine $H_{\ast}(S^{V};\mB_{k,j})$ for various
$V$. For this, we make several additional observations. If we let
$\phi_{p^{j}}$ denote the quotient map $\Cp{n}\to\Cp{n}/\Cp{j}$, then
there is an associated inflation (aka pullback) functor
\[
\phi_{p^{j}}^{\ast}\colon \Cp{n}/\Cp{j}\mhyphen\mathcal{M}ackey
\longrightarrow\Cp{n}\mhyphen\mathcal{M}ackey.
\]

The Mackey functors $\mB_{k,j}$ are all in the
image of $\phi_{p^{j}}^{\ast}$, and this is an exact functor. For us,
it simply inserts zeros at the bottom of a Lewis diagram.

\begin{prop}
We have natural isomorphisms
\[
\phi_{p^{j}}^{\ast} \mB_{k,m}
\cong \mB_{k,m+j}\text{ and }
\phi_{p^{j}}^{\ast} \mB_{k,m}^{\ell}
\cong \mB_{k,m+j}^{\ell}.
\]
\end{prop}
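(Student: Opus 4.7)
The plan is to verify both isomorphisms by a direct comparison on the level of orbits and structure maps, since the functor $\phi_{p^j}^{\ast}$ and the Mackey functors $\mB_{k,m}$ and $\mB_{k,m}^{\ell}$ all have explicit descriptions.

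First I would recall the effect of inflation: $\phi_{p^j}^{\ast}\mM$ has value $\mM(\Cp{n-j}/\Cp{s-j})$ on $\Cpn/\Cp{s}$ when $s\geq j$, and value $0$ when $s<j$, with structure maps inherited from $\mM$ above level $j$ and identically zero below.

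For the first isomorphism, I would substitute $i=s-j$ into the formula
\[
\mB_{k,m}(\Cp{n-j}/\Cp{i})=\begin{cases} \Z/p^{k} & i\geq k+m,\\ \Z/p^{i-m} & m<i<k+m,\\ 0 & i\leq m,\end{cases}
\]
and compare with the description of $\mB_{k,m+j}$ on $\Cpn/\Cp{s}$. The values coincide (with the new vanishing threshold at $s=m+j$), and the restriction and transfer maps agree because both sides are governed by the same local rules: restrictions are canonical quotients and transfers are multiplication by $p$ within the nonzero range, while the structure maps straddling the threshold are forced to zero on one side.

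For the second isomorphism, I would split into the two cases of the definition. For $\ell\geq 0$ the Mackey functor $\mB_{k,m}^{\ell}$ is defined piecewise as $\mB_{k,m}$ on subgroups of $\Cp{\ell+k+m}$ and as $\mB_{k,m}^{\ast}$ above; applying the first isomorphism together with the parallel identity $\phi_{p^j}^{\ast}\mB_{k,m}^{\ast}\cong\mB_{k,m+j}^{\ast}$ (proved by the same method, using that $\mB_{k,m}^{\ast}$ and $\mB_{k,m}$ have the same orbit values but with restrictions and transfers interchanged in role), one sees that the transition level is shifted by $j$, i.e.\ from $\ell+k+m$ to $\ell+k+(m+j)$, which matches the description of $\mB_{k,m+j}^{\ell}$. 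For $-k\leq\ell\leq 0$ the identity $\mB_{k,m}^{\ell}=\mB_{k+\ell,m}$ reduces the claim to the first isomorphism applied with $k$ replaced by $k+\ell$.

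The only real obstacle is bookkeeping with the index shifts in each piece of the Lewis diagram, particularly the verification that structure maps straddling the transition at $\Cp{\ell+k+m+j}$ on the inflated side agree with those of $\mB_{k,m+j}^{\ell}$. Once this is done the rest of the argument is mechanical, since both sides carry exactly the same data at every orbit.
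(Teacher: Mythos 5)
Your proof is correct and is exactly the verification the paper leaves implicit — the proposition is stated without proof because it follows by direct comparison of Lewis diagrams once one knows that inflation for cyclic $p$-groups shifts orbit indices by $j$ and inserts zeros below level $j$. Your substitution $i=s-j$ in the explicit formula for $\mB_{k,m}(\Cp{n-j}/\Cp{i})$ and the observation that the structure maps (quotients as restrictions, multiplication by $p$ as transfers) are determined locally in the nonzero range is precisely the right check for the first isomorphism; the parallel verification for $\mB_{k,m}^{\ast}$, together with the observation that the transition level $\Cp{\ell+k+m}$ inside $\Cp{n-j}$ corresponds under $\phi_{p^j}$ to $\Cp{\ell+k+m+j}$ inside $\Cpn$, handles the case $\ell\geq 0$, and the reduction $\mB_{k,m}^{\ell}=\mB_{k+\ell,m}$ disposes of $-k\leq\ell\leq 0$. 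No gaps.
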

It therefore suffices to consider the case of $j=0$. We then have the
following generalization of Proposition~\ref{prop-2.3}, and the proof
is a direct computation.

\begin{prop}\label{prop:InducedBExact}
We have an exact sequence
\[
0 \to\mB_{\min(\ell,k),0} 
  \to\Ind_{\cp{\ell}}^{\cp{n}}\Res_{\cp{\ell}}^{\cp{n}} \mB_{k,0}
  \xrightarrow{\gamma-1}\Ind_{\cp{\ell}}^{\cp{n}}
                        \Res_{\cp{\ell}}^{\cp{n}} \mB_{k,0}
  \to\mB_{k,0}^{\ell-k}
   \to 0.
\]
\end{prop}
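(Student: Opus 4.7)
This is a direct orbit-by-orbit verification, parallel in spirit to the proof of Proposition~\ref{prop-2.3}. First, using Definition~\ref{def-IndRes} together with the fact that the Weyl action on each value of $\mB_{k,0}$ is trivial, I would unpack the middle term as
\[
\big(\Ind_{\cp{\ell}}^{\cp{n}}\Res_{\cp{\ell}}^{\cp{n}}\mB_{k,0}\big)\!\big(\Cp{n}/\Cp{m}\big) \cong
\begin{cases}
\mB_{k,0}(\Cp{\ell}/\Cp{m})^{\oplus p^{n-\ell}} & m\leq \ell,\\
\mB_{k,0}(\Cp{\ell}/\Cp{\ell})^{\oplus p^{n-m}} & m\geq \ell,
\end{cases}
\]
with $\gamma$ cyclically permuting summands in both cases. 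Since the kernel and cokernel of $\gamma-1$ acting on $N$ cyclically permuted copies of an abelian group $A$ are each isomorphic to $A$ — via the diagonal inclusion and the sum map respectively — the underlying abelian group at the orbit $\Cp{n}/\Cp{m}$ of both the kernel and the cokernel is $\Z/p^{\min(\ell,k,m)}$ (with the convention $\Z/p^0=0$).

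Next I would check that the Mackey functor structure inherited from $\Ind\Res\mB_{k,0}$ matches $\mB_{\min(\ell,k),0}$ on the kernel side. This is the easy half: the diagonal inclusion is compatible with the canonical-quotient restrictions and multiplication-by-$p$ transfers already present on $\mB_{k,0}$, so below $\Cp{\ell}$ the kernel inherits full $\mB_{k,0}$-structure and above $\Cp{\ell}$ it plateaus at $\Z/p^{\min(\ell,k)}$, which is exactly the Lewis diagram of $\mB_{\min(\ell,k),0}$. For the cokernel, Frobenius reciprocity turns the transfer on the induced module into the fold map, so after passing to coinvariants the transfers above $\Cp{\ell}$ become the identity and the restrictions above $\Cp{\ell}$ become multiplication by the index — exactly the hybrid pattern of $\mB_{k,0}^{\ell-k}$, whose superscript was defined to select the level $\Cp{\ell}$ at which the Mackey functor switches from $\mB_{k,0}$-behavior to $\mB_{k,0}^{\ast}$-behavior. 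The convention $\mB_{k,0}^{\ell-k}=\mB_{\ell,0}$ for $\ell\leq k$ (i.e.\ $\ell-k\leq 0$) is consistent: the cokernel then coincides with the kernel $\mB_{\ell,0}$, as both sides agree with $\mB_{k,0}$ restricted to $\Cp{\ell}$.

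The main obstacle is the cokernel bookkeeping across the boundary $m=\ell$, where the description of $\Ind\Res\mB_{k,0}$ changes character — below $\Cp{\ell}$ one has $p^{n-\ell}$ cyclically permuted copies of variable-size groups $\mB_{k,0}(\Cp{\ell}/\Cp{m})$, while above $\Cp{\ell}$ one has $p^{n-m}$ copies of the fixed group $\Z/p^{\min(\ell,k)}$ — and one must verify that the induced transfers and restrictions knit together correctly to reproduce the piecewise definition of $\mB_{k,0}^{\ell-k}$, including the case split $\min(\ell,k)=\ell$ versus $k$. Exactness in the middle of the four-term sequence is then automatic: on each orbit value we are tensoring an abelian group with the standard exact sequence $0\to\Z\to\Z[\Cp{n}/\Cp{\ell}]\xrightarrow{\gamma-1}\Z[\Cp{n}/\Cp{\ell}]\to\Z\to 0$, which is exactly the mechanism underlying Proposition~\ref{prop-2.3}.
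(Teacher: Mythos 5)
Your general strategy — unpack the induced–restricted Mackey functor orbit‐by‐orbit via Definition~\ref{def-IndRes}, observe that $\gamma-1$ acts on a cyclically permuted sum of copies, and read off the diagonal as the kernel and the fold as the cokernel — is indeed the computation the paper calls ``direct'' and leaves to the reader, and your treatment of the case $\ell\geq k$ is sound.

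The error is in the paragraph handling $\ell\leq k$. You assert that in this range ``the cokernel then coincides with the kernel $\mB_{\ell,0}$, as both sides agree with $\mB_{k,0}$ restricted to $\Cp{\ell}$.'' Two Mackey functors agreeing after restriction to $\Cp{\ell}$ need not agree on $\Cp{n}$, and here they do not. Your own preceding sentence already establishes what the cokernel looks like above $\Cp{\ell}$: transfers become the identity and restrictions become multiplication by the index. For $\ell\leq k$ this gives, on the levels $m\geq\ell$, the constant value $\Z/p^{\ell}$ with $\res=p$ and $\tr=1$ — which is the \emph{dual} pattern $\mB_{\ell,0}^{\ast}$, not $\mB_{\ell,0}$ (whose restrictions are identity and transfers are multiplication by $p$ on that range). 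A two-line check in the case $n=2$, $\ell=1$, $k=2$ confirms this: the middle term is $0\to(\Z/p)^{\oplus p}\to\Z/p$ with diagonal restriction and fold transfer, and the cokernel of $\gamma-1$ has $\res_{p}^{p^{2}}=0$ and $\tr_{p}^{p^{2}}=1$, i.e.\ $\mB_{1,0}^{\ast}$, whereas the kernel has $\res=1$, $\tr=0$, i.e.\ $\mB_{1,0}$. So kernel and cokernel are \emph{not} equal when $\ell<k$, and the identification of the cokernel must carry a dual. (You have, incidentally, reproduced a small inconsistency in the paper's own convention: the clause ``for $-k\leq\ell\leq 0$, $\mB_{k,j}^{\ell}=\mB_{k+\ell,j}$'' contradicts the stated equality $\mB_{k,j}^{0}=\mB_{k,j}^{\ast}$ at $\ell=0$; for the proposition to hold as used, that clause should read $\mB_{k+\ell,j}^{\ast}$. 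But the burden of the proof is precisely to notice and resolve such discrepancies, and the justification you give — agreement after restriction — cannot do so.)

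A secondary, lesser issue: you flag ``the cokernel bookkeeping across the boundary $m=\ell$'' as the main obstacle but do not actually discharge it; since that boundary step is exactly where the restriction flips from canonical quotient to multiplication by the index and the transfer from multiplication by $p$ to the identity, it needs to be written out rather than only acknowledged. Doing so carefully would also have exposed the sign error above.
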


Proposition~\ref{prop:InducedBExact} is the initial input for
computing the Mackey functor cellular homology. The map $\gamma-1$ is
the attaching map of the even cells in $S^{V}$ to the odd cells when
we use our minimal cell model. Thus the cellular homology is
completely determined by looking at the maps from the cokernels in
Proposition~\ref{prop:InducedBExact} to the kernels as $\ell$
varies. We have two intermediate lemmata which serve as the work-horse
for any computations. These are immediate computations, and in both
cases, the middle map is an isomorphism when we restrict to the
subgroup for which both middle Mackey functors coincide.
 
\begin{lemma}\label{lem:ShortExactSequence}
For $\ell\geq -k$, we have an exact sequence
\[
0  \to \mB_{\min(k,k+\ell),\max(k,k+\ell)}^{\ast}
   \to \mB_{k,0}^{\ell}
   \to \mB_{k,0}
   \to \mB_{k,k+\ell}
   \to 0
\]
in which the middle map is the identity when restricted to $C_{p^{k+\ell }}$.
\end{lemma}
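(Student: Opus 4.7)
The plan is to verify the sequence orbit-by-orbit by reading off and comparing the Lewis diagrams of the four Mackey functors involved, invoking the ``Pr\"ufer'' convention at the start of this section to assume $n$ is large enough that no indices saturate. The argument splits according to the sign of $\ell$: when $\ell \geq 0$, $\mB_{k,0}^{\ell}$ is a genuine hybrid and the kernel is $\mB_{k,k+\ell}^{\ast}$; when $-k \leq \ell < 0$, the definition gives $\mB_{k,0}^{\ell} = \mB_{k+\ell,0}$ and the middle map is a map between two ``pure'' $\mB$'s, with kernel $\mB_{k+\ell,k}^{\ast}$.

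First I would pin down the middle map. Below $\Cp{k+\ell}$ both Mackey functors restrict to $\mB_{k,0}$, so the specified identity there forces the map on every orbit $G/\Cp{m}$ with $m \leq k + \ell$. For $m > k+\ell$, commutativity with the restriction down to $G/\Cp{k+\ell}$ determines the map uniquely: in $\mB_{k,0}^{\ell}$ that restriction carries an explicit power-of-$p$ factor, visible in the partial Lewis diagram displayed for $\mB_{k,j}^{\ell}$, while in $\mB_{k,0}$ it is the identity on $\Z/p^{k}$, so the map at $G/\Cp{m}$ must be the matching power of $p$. A short check verifies compatibility with transfers as well.

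Second, I would compute kernel and cokernel one orbit at a time. On orbits below $\Cp{k+\ell}$ the map is the identity, so both are zero. Above $\Cp{k+\ell}$ the map is multiplication by a specific power of $p$ between cyclic groups $\Z/p^k$, whose kernel and cokernel are read off immediately as cyclic $p$-groups of the expected orders. I then compare the resulting Lewis diagrams with those of $\mB_{\min(k,k+\ell),\max(k,k+\ell)}^{\ast}$ (for the kernel) and $\mB_{k,k+\ell}$ (for the cokernel), checking in each case that the restrictions and transfers induced via the snake lemma agree with the intrinsic structure maps of these Mackey functors.

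The main obstacle is purely bookkeeping: making sure that the induced maps in the kernel reproduce the ``starred'' (dual-flavored) restriction/transfer pattern of $\mB_{?,?}^{\ast}$, while those in the cokernel reproduce the non-dual pattern of $\mB_{k,k+\ell}$; this is where the transition at level $k+\ell$ has to be tracked carefully. The degenerate range $-k \leq \ell < 0$ needs separate treatment because the source is no longer a true hybrid, but there the exactness follows directly from writing out the definitions of $\mB_{k+\ell,0}$, $\mB_{k,0}$, $\mB_{k+\ell,k}^{\ast}$, and $\mB_{k,k+\ell}$ side-by-side.
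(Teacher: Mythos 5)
Your treatment of the $\ell \geq 0$ case is correct and matches the paper's (implicit) approach: the middle map on $G/C_{p^m}$ with $m>k+\ell$ is forced by the restriction square to be multiplication by $p^{m-(k+\ell)}$ on $\Z/p^{\min(m,k)}$, and reading off kernels and cokernels gives $\mB_{k,k+\ell}^{\ast}$ and $\mB_{k,k+\ell}$ as claimed.

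However, your treatment of the range $-k < \ell < 0$ has a genuine gap. You assert, following the paper's literal definition, that $\mB_{k,0}^{\ell}=\mB_{k+\ell,0}$ and that ``the exactness follows directly from writing out the definitions,'' but with that identification there is \emph{no} Mackey functor map $\mB_{k+\ell,0}\to\mB_{k,0}$ that is the identity on $C_{p^{k+\ell}}$. Look at the transfer square from $G/C_{p^{k+\ell}}$ up to $G/C_{p^{k+\ell+1}}$: in $\mB_{k+\ell,0}$ the transfer is $\times p$ on $\Z/p^{k+\ell}$, which has nontrivial kernel (since $k+\ell\geq 1$), while in $\mB_{k,0}$ it is the injection $\Z/p^{k+\ell}\hookrightarrow\Z/p^{k+\ell+1}$, $1\mapsto p$. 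Commutativity would force a non-injective map to equal an injective one on a nonzero group, which is impossible. In fact the paper's two definitions of $\mB_{k,j}^{\ell}$ conflict at $\ell=0$ (the ``$\ell\geq 0$'' clause gives $\mB_{k,j}^{\ast}$, the ``$\ell\leq 0$'' clause gives $\mB_{k,j}$), and the reading that makes the lemma---and Proposition \ref{prop:InducedBExact}---come out right for negative $\ell$ is $\mB_{k,j}^{\ell}=\mB_{k+\ell,j}^{\ast}$. With that correction, the middle map on $G/C_{p^m}$ is again $\times p^{m-(k+\ell)}$, now from $\Z/p^{k+\ell}$ to $\Z/p^{\min(m,k)}$, and the kernel and cokernel work out to $\mB_{k+\ell,k}^{\ast}$ and $\mB_{k,k+\ell}$ exactly as the lemma asserts. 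Your proposal would fail as written because it does not confront the nonexistence of the map, nor does it supply the reinterpretation of $\mB_{k,0}^{\ell}$ needed to repair it.
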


Pulling this back gives what happens for a general pair $(k,j)$:
\begin{cor}
For any $j$ and for $\ell\geq -k$, we have an exact sequence
\[
0  \to \mB_{\min(k,k+\ell),\max(k,k+\ell)+j}^{\ast}
   \to \mB_{k,j}^{\ell}
   \to \mB_{k,j}
   \to \mB_{k,k+\ell+j}
   \to 0
\]
in which the middle map is the identity when restricted to  $C_{p^{k+\ell+j }}$.
\end{cor}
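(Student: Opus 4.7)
The plan is to deduce this Corollary directly from Lemma~\ref{lem:ShortExactSequence} by applying the inflation functor $\phi_{p^j}^{\ast}$ associated to the quotient map $\Cp{n}\to\Cp{n}/\Cp{j}$. Since the source Mackey functors for $\Cp{n}/\Cp{j}$ satisfy the hypotheses of Lemma~\ref{lem:ShortExactSequence} verbatim (with the group in question having ``$n$'' replaced by ``$n-j$''), the exact sequence there reads
\[
0 \to \mB_{\min(k,k+\ell),\max(k,k+\ell)}^{\ast} \to \mB_{k,0}^{\ell} \to \mB_{k,0} \to \mB_{k,k+\ell} \to 0,
\]
with its middle map being the identity upon restriction to $C_{p^{k+\ell}}$.

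Next, I would invoke the fact (recorded just before Proposition~\ref{prop:InducedBExact}) that $\phi_{p^j}^{\ast}$ is exact --- it merely inserts zeros at the bottom of the Lewis diagram. Applying this functor preserves the exactness of the four-term sequence. To identify the terms after inflation, I would use the natural isomorphisms $\phi_{p^j}^{\ast}\mB_{k,m}\cong \mB_{k,m+j}$ and $\phi_{p^j}^{\ast}\mB_{k,m}^{\ell}\cong \mB_{k,m+j}^{\ell}$ from the proposition preceding Lemma~\ref{lem:ShortExactSequence}. The dual $\mB_{k,m}^{\ast}=\mB_{k,m}^{0}$ is a special case of the second isomorphism, so the first term transforms correctly as well.

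Term by term, the sequence then reads as claimed, with $0$ replaced by $j$ in each lower index. For the assertion about the middle map, the identification of ``the identity when restricted to $C_{p^{k+\ell}}$'' pulls back under $\phi_{p^j}^{\ast}$ to ``the identity when restricted to $C_{p^{k+\ell+j}}$,'' since the inflation functor shifts subgroups up by $p^j$ in the Lewis diagram and evaluation on the corresponding orbit is preserved.

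The only step requiring any care is ensuring the isomorphism $\phi_{p^j}^{\ast}\mB_{k,m}^{\ast}\cong \mB_{k,m+j}^{\ast}$, but this is subsumed by the $\ell=0$ case of the $\mB_{k,m}^{\ell}$ statement; no genuinely new calculation arises. In short, this Corollary is purely formal, obtained by naturality of the inflation functor on the short exact sequence of Lemma~\ref{lem:ShortExactSequence}.
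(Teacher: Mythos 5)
Your proof is correct and is exactly the approach the paper takes; the paper's entire ``proof'' is the introductory phrase ``Pulling this back gives what happens for a general pair $(k,j)$.'' You have simply spelled out the details: apply the exact inflation functor $\phi_{p^j}^{\ast}$ to the four-term exact sequence of Lemma~\ref{lem:ShortExactSequence} and identify the terms via $\phi_{p^j}^{\ast}\mB_{k,m}\cong\mB_{k,m+j}$ and $\phi_{p^j}^{\ast}\mB_{k,m}^{\ell}\cong\mB_{k,m+j}^{\ell}$.
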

 
\subsubsection{The Bredon homology of $S^{V}$ with coefficients in $\mB_{k,j}$}

We can splice all of the exact sequences from the previous subsection
to determine the Bredon homology of any representation sphere with
coefficients in $\mB_{k,j}$. This section is only relevant for
readers who wish to run the slice spectral sequence, as it
explains how to completely determine the slice $E_{2}$-term for
$S^{\infty\lambda}\wedge H\mZ$ and for $S^{m\lambda}\wedge H\mZ$.

First, we reduce to the cases most of interest.
\begin{prop} \label{Prop:BredonDoesntSee}
Let $W$ be a representation of $\Cp{n}$, let $W_{j}=W^{\Cp{j+1}}$, and let $U_{j}$ be the orthogonal complement of $W_{j}$ in $W$. Then the natural inclusion
\[
a_{U_{j}}\colon S^{W_{j}}\to S^{W}
\]
induces an equivalence upon smashing with $H\mB_{k,j}$.
\end{prop}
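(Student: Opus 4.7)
The plan is to show that the cofiber $C$ of the Euler class $a_{U_j}\colon S^0\to S^{U_j}$ is annihilated by $H\mB_{k,j}$; smashing the cofiber sequence $S^0\to S^{U_j}\to C$ further with the invertible spectrum $S^{W_j}$ then yields the asserted equivalence $S^{W_j}\wedge H\mB_{k,j}\xrightarrow{\sim} S^W\wedge H\mB_{k,j}$.

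First I would pin down the structure of $U_j$. At the odd prime $p$ the irreducible real $\Cp n$-representations are the trivial character and the two-dimensional $\lambda(r)$ for $r\in \Z/p^n$, and the kernel of $\lambda(r)$ is exactly $\Cp{v_p(r)}$. Hence $\lambda(r)^{\Cp{j+1}}\neq 0$ iff $v_p(r)\geq j+1$, so decomposing $W$ into irreducibles shows that $U_j$ is a direct sum of $\lambda(r)$-summands each with $v_p(r)\leq j$, equivalently each with stabilizer contained in $\Cp j$. Applying Proposition~\ref{prop:CellStructure}, the sphere $S^{U_j}$ then admits a cell structure whose only cell above the base $S^0$ is of the form $(\Cp n/\Cp s)_+\wedge e^?$ with $s\leq j$, and so the cofiber $C$ is built entirely from cells of this form.

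To finish I would observe that for any $s\leq j$ we have $\mB_{k,j}(\Cp n/\Cp s)=0$, so the restriction of $\mB_{k,j}$ to $\Cp s$ is the zero Mackey functor and hence $(\Cp n/\Cp s)_+\wedge H\mB_{k,j}\simeq *$. A routine induction up the cell filtration of $C$ then gives $C\wedge H\mB_{k,j}\simeq *$, which is exactly what was needed. No step here presents a serious obstacle: the argument is essentially the earlier observation that $a_{\lambda_i}$ induces an equivalence $H\mB_{k,j}\simeq \Sigma^{\lambda_i}H\mB_{k,j}$ for $i\leq j$, applied summand-by-summand to $U_j$; the only point requiring any care is the isotropy computation identifying the irreducible summands of $U_j$, which is immediate from the definition of $\lambda(r)$.
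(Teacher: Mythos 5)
Your proof is correct and follows essentially the same route as the paper: both identify that $U_j$ has all nontrivial isotropy properly contained in $\Cp{j+1}$, deduce a cell structure for $S^{U_j}$ with zero skeleton $S^0$ and all higher cells induced from subgroups of $\Cp{j}$, and conclude from the vanishing of $\mB_{k,j}$ on such orbits that $a_{U_j}$ becomes an equivalence after smashing with $H\mB_{k,j}$. The only cosmetic difference is that you phrase the last step via the cofiber, whereas the paper argues directly that the Euler class becomes an equivalence.
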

 
\begin{proof}
By assumption, the stabilizer of every point of $U_{j}-\{0\}$ is
properly contained in $\Cp{j+1}$. We therefore conclude that we can
choose a cellular model for $S^{U_{j}}$ such that the zero skeleton is
$S^{0}$ and every cell of dimension greater than zero is induced from
a proper subgroup of $\Cp{j+1}$. Since the restriction of
$\mB_{k,j}$ to any proper subgroup of $\Cp{j+1}$ is zero, we
conclude that the Euler class $a_{U_{j}}$ is an equivalence upon
smashing with $H\mB_{k,j}$.
\end{proof} 

Thus the Bredon homology computation we care about cannot distinguish
between $S^{W}$ and its $\Cp{j+1}$-fixed points. It is also immediate
that the chain complex computing the Bredon homology is in the image
of the pullback functor, so without loss of generality, we may assume
that $j=0$.

It is also clear that we may, without loss of generality, restrict attention to fixed point free representations of $\Cp{n}$, since the inclusion of a trivial summand serves only to shift the homology groups around.

\begin{notation}\label{nota:RepandKs}
Let $V$ be a fixed-point free representation of $\Cp{n}$ that
restricts trivially to $C_{p}$. Then find a $JO$-equivalent
representation $W$, and write
\[
W=\sum_{0<r\leq n}k_{r}\lambda_{n-r},
\]

Define natural numbers $K_{i}$ for $i>0$ by
\[
K_{i}=2\sum_{r=1}^{i}k_{r},
\]
and let $K_{0}=0$.  

Finally, let $0<i_{0}<\dots<i_{m}$ denote those integers $i$ such that $k_{i}\neq 0$, and let $h_{r}=n-i_{r}$.
\end{notation}

Since the homology groups of any space are bound between its
connectivity and its dimension, we need only determine the homology
groups between dimensions $0$ and $K_{m}$.

\begin{theorem}\label{thm:BredonHomology}

The Mackey functor valued Bredon homology groups of $S^{V}$ with coefficients in $\mB_{k}$ are given by
\[
\m{H}_{s}(S^{V};\mB_{k})=
\begin{cases}
0    & s<0,\\
\mB_{k,h_{0}} 
     & s=0\\
\mB_{\min(h_{0},k),\max(h_{0},k)}^{\ast} 
     & s=1 \\ 
\mB_{\min(k,n-i),n-i} 
     & K_{i-1}+2\leq s\leq K_{i}-2, i\text{ even} \\
\mB_{\min{(k,n-i)},n-i}^{\ast} 
     & K_{i-1}+3\leq s\leq K_{i}-1, i\text{ odd}\\
\mB_{\min{(k,h_{r})}, h_{r+1}} 
     & s=K_{i_{r}} \\
\mB_{\min{(k,h_{r+1})},\max (\min(k,h_{r}),h_{r+1})} 
     & s=K_{i_{r}}+1.
\end{cases}
\]
\end{theorem}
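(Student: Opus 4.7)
My plan is to compute $\m{H}_*(S^V;\mB_k)$ directly from the minimal cellular chain complex of Proposition~\ref{prop:CellStructure}, using Proposition~\ref{prop:InducedBExact} and Lemma~\ref{lem:ShortExactSequence} as the main computational engine.

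First I would reduce. Proposition~\ref{Prop:BredonDoesntSee} shows that trivial summands and $\lambda_{0}$-summands of $V$ are invisible to $\mB_k=\mB_{k,0}$ after applying the corresponding Euler class, so I may replace $V$ by a $JO$-equivalent $W=\sum_{r=1}^{n-1}k_r\lambda_{n-r}$ as in Notation~\ref{nota:RepandKs}. With the minimal CW-structure, the reduced cellular chain complex of Mackey functors has $C_0=\mB_k$ and, for $K_{r-1}<d\leq K_r$,
\[
C_d \;=\; L_r \;:=\; \Ind_{\Cp{n-r}}^{\Cp{n}}\Res_{\Cp{n-r}}^{\Cp{n}}\mB_k.
\]
Within each chunk the differentials alternate between $1-\gamma$ and the transfer-inclusion map between successive copies of $\lambda_{n-r}$; the transition differential $L_{i_{r+1}}\to L_{i_r}$ between consecutive non-empty chunks is induced by the canonical inclusion $\Cp{n-i_{r+1}}\subset\Cp{n-i_r}$, and the bottom transition $L_{i_0}\to\mB_k$ is the transfer/fold map.

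For a dimension $s$ strictly interior to a chunk $i$ with $k_i\neq 0$ (i.e.\ away from the chunk endpoints and from a chunk transition), both neighboring differentials on $L_i$ are built from $1-\gamma$. Proposition~\ref{prop:InducedBExact} applied with $\ell=n-i$ identifies
\[
\ker(1-\gamma)=\mB_{\min(n-i,k)},\qquad \mathrm{coker}(1-\gamma)=\mB_k^{(n-i)-k},
\]
and Lemma~\ref{lem:ShortExactSequence} then pins the resulting homology down as $\mB_{\min(k,n-i),n-i}$ in the ``covariant'' interior positions and its dual $\mB^{*}_{\min(k,n-i),n-i}$ in the ``contravariant'' ones, matching the two interior cases of the theorem; the parity condition in the statement records which of the two roles ($1-\gamma$ on the source vs.\ on the target) is played by the first interior differential of the chunk.

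The remaining cases $s\in\{0,1\}$ and the transitions $s\in\{K_{i_r},K_{i_r}+1\}$ are the chunk-boundary cases. I would compute these by splicing two copies of the exact sequence of Proposition~\ref{prop:InducedBExact} (one for $\ell=h_{r+1}$ and one for $\ell=h_r$) along the transition map between $L_{i_{r+1}}$ and $L_{i_r}$, and then reading off the kernel and cokernel of the resulting composite as a hybrid $\mB_{k,j}^{\ell}$ Mackey functor from Section~\ref{sec-drama}. This produces $\mB_{\min(k,h_r),h_{r+1}}$ at $s=K_{i_r}$ and $\mB_{\min(k,h_{r+1}),\max(\min(k,h_r),h_{r+1})}$ at $s=K_{i_r}+1$. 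The special case $s\in\{0,1\}$ is the same construction applied at the bottom of the complex, where the ``previous chunk'' is replaced by $C_0=\mB_k$ and the transition differential is the transfer $L_{i_0}\to\mB_k$; the splice yields $\mB_{k,h_0}$ in degree $0$ and $\mB^{*}_{\min(h_0,k),\max(h_0,k)}$ in degree $1$.

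The main obstacle will be the splicing step. Verifying that two overlapping exact sequences from Proposition~\ref{prop:InducedBExact} combine into a single hybrid Mackey functor with the specific $\max(\min(k,h_r),h_{r+1})$ pattern in the second subscript requires a short case analysis on the three possible orderings of $k$, $h_r$, and $h_{r+1}$, since the Lewis-diagram shape changes depending on where $k$ sits in the chain $h_{r+1}<h_r$. Once the three cases are checked to collapse to the same clean formula, the rest is bookkeeping that can be carried out chunk by chunk.
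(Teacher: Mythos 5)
Your proposal follows the paper's proof closely: both apply $\mB_k$ to the cellular chain complex of Proposition~\ref{prop:CellStructure}, use Proposition~\ref{prop:InducedBExact} to control the $1-\gamma$ differentials, and invoke Lemma~\ref{lem:ShortExactSequence} to identify the maps between the resulting kernels and cokernels. Two minor slips worth flagging, neither of which derails the argument: Proposition~\ref{Prop:BredonDoesntSee} with $j=0$ removes only $\lambda_0$-summands, while trivial summands are discarded by the separate degree-shift observation the paper makes explicitly; and within a chunk the differentials \emph{alternate} between $1-\gamma$ and the fold/transfer map rather than both neighbors of an interior cell being $1-\gamma$, though your subsequent use of the proposition and lemma to compute the interior homology as $\ker$ and $\operatorname{coker}$ of the induced map is exactly right.
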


\begin{proof}
This follows immediately from our cell structure of
Proposition~\ref{prop:CellStructure} and the short exact sequences
above.

Applying the Mackey functor $\mB_{k}$ to the cellular complex results
in a chain complex of Mackey functors:
\begin{displaymath}
\xymatrix
@R=5mm
@C=4mm
{
%
{\mB_{k}}
    &{\Ind_{{h}_{0}}\Res_{{h}_{0}}\mB_{k}}\ar[l]
        &{\Ind_{{h}_{0}}\Res_{{h}_{0}}\mB_{k}}\ar[l]_(.5){1-\gamma }
            &{\Ind_{{h}_{0}}\Res_{{h}_{0}}\mB_{k}}\ar[l]
                &{\Ind_{{h}_{0}}\Res_{{h}_{0}}\mB_{k}}\ar[l]_(.5){1-\gamma }
                    &   \dotsb \ar[l]
                        &{\Ind_{{h}_{m}}\Res_{{h}_{m}}\mB_{k}}\ar[l]
}
\end{displaymath}

(for ease of exposition, we have assumed that $k_{i_{0}}\geq 2$). By
our assumptions on $V$, we know that the sole instance of $\mB_{k}$
lies in dimension $0$, and the complex is homologically graded. In
particular, there is the map $1-\gamma$ from every positive even
degree to the adjacent odd degree, up through $\dim
V$. Proposition~\ref{prop:InducedBExact} allows us to identify the
kernels and cokernels of the maps labeled $1-\gamma$, letting us
replace the cellular chains with a simpler complex:
\[
\xymatrix
@R=10mm
@C=7mm
{
{\mB_{k}} 
    & {\Ind_{{h}_{0}}\Res_{{h}_{0}}\mB_{k}}\ar[l] \ar@{->>}[d] 
        & {\Ind_{{h}_{0}}\Res_{{h}_{0}}\mB_{k}}\ar[l]_{1-\gamma} 
            & {\Ind_{{h}_{0}}\Res_{{h}_{0}}\mB_{k}}\ar[l] \ar@{->>}[d] 
                & {\Ind_{{h}_{0}}\Res_{{h}_{0}}\mB_{k}}\ar[l]_{1-\gamma} 
                    & {\dots}\ar[l]  \\
{}  & {\mB_{k,0}^{{h}_{0}-k}}\ar@{.>}[ul] 
        & {\mB_{\min({h}_{0},k),0}}\ar@{^{(}->}[u] 
            & {\mB_{k,0}^{{h}_{0}-k}}\ar@{.>}[l] 
                & {\mB_{\min({h}_{0},k),0}}\ar@{^{(}->}[u] 
                    & {\dots}\ar@{.>}[l]}
\]
Having controlled the differential from even degrees to odd degrees,
it remains only to understand the maps from the cokernels of the
various $1-\gamma$s to the kernels. This is exactly what
Lemma~\ref{lem:ShortExactSequence} records. The result is then a
simple application of this lemma and counting.
\end{proof}

\subsection{A criterion for slice codimension} \label{subsec-4.2}
In determining slice
dimensions, we will need a fact about the Bredon cohomology for a few
representation spheres.

In what follows, let $\m{H}^{k}(X;\mM)$ denote the Mackey functor
valued Bredon cohomology of $X$ with coefficients in $\mM$. This can
be computed by choosing an equivariant cellular model for $X$ and
building the cellular co-chain complex which assigns to an $n$-cell
with stabilizer $H$ the Mackey functor
$\Ind_{H}^{G}\Res_{H}^{G}\mM$. The maps are induced by the cellular
attaching maps in the standard way. Equivalently, we can simply build
a cellular model for $DX$, the Spanier-Whitehead dual of $X$, and
compute the negative Bredon homology of $DX$. 

We need this only for $X$ a representation sphere and $\mM$ a
$\mZ$-module, and even there, we need very few specific computations.

\begin{lemma}\label{lem:UpperBound}
Let $V$ be a representation of ${\Cpn} $. If $\lambda_{m}$ is a
summand of $V$, then for every $\mM$ for which
$\res_{\cp{m}}^{\cp{\ell}}$ is an isomorphism for $\ell\geq m$,
\[
\m{H}^{0}(S^{V};\mM)=\m{H}^{1}(S^{V};\mM)=0.
\]
\end{lemma}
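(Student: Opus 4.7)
The plan is to compute $\m{H}^0(S^V;\mM)$ and $\m{H}^1(S^V;\mM)$ from the reduced cellular cochain complex of $S^V$ given by Proposition~\ref{prop:CellStructure}. Writing $V=k_n+\sum_{i<n}k_i\lambda_i$ with $k_m\geq 1$, set $j_1=\max\{i:k_i>0\}$; because $\lambda_m$ is a summand, $j_1\geq m$. The cochain complex vanishes below dimension $k_n$, has $\tilde{C}^{k_n}=\mM$ coming from the top cell of the trivial $k_n$-sphere, and has $\tilde{C}^{k_n+1}$ and $\tilde{C}^{k_n+2}$ both equal to $\m{Hom}(\Ind_{\Cp{j_1}}^{\Cpn}\mZ,\mM)$, coming from the first pair of $\lambda_{j_1}$-cells attached by $1-\gamma$. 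For $k_n\geq 2$ both $\m{H}^0$ and $\m{H}^1$ vanish trivially by support, so only the cases $k_n\in\{0,1\}$ require work.

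In those cases the vanishing reduces to two orbit-wise assertions: first, that the differential $d^{k_n}\colon\mM\to\m{Hom}(\Ind_{\Cp{j_1}}^{\Cpn}\mZ,\mM)$ dual to the cellular counit is injective (handling $\m{H}^{k_n}$); and second---needed only when $k_n=0$---that $\mathrm{im}(d^0)=\ker(d^1)$, where $d^1$ is dual to $1-\gamma$. At an orbit $\Cpn/\Cp{\ell}$, Mackey decomposition presents $\tilde{C}^{k_n+1}(\Cpn/\Cp{\ell})$ as a direct sum of copies of $\mM(\Cpn/\Cp{\min(\ell,j_1)})$ indexed by $\Cpn/\Cp{\max(\ell,j_1)}$. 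The map $d^{k_n}$ appears as the diagonal into these copies, precomposed with $\res_{\Cp{j_1}}^{\Cp{\ell}}$ when $\ell\geq j_1$, and $d^{k_n+1}$ appears as the coordinate cyclic shift induced by the $\gamma$-action on the indexing set.

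The hypothesis on $\mM$ upgrades (by composition and cancellation) to the statement that every restriction among subgroups containing $\Cp{m}$ is an isomorphism, so the restriction in $d^{k_n}$ is an isomorphism whenever it appears. Hence $d^{k_n}$ is a composite of an isomorphism and a diagonal, which is injective, giving $\m{H}^{k_n}=0$. For the image/kernel comparison, $\gamma$ acts transitively on $\Cpn/\Cp{\max(\ell,j_1)}$, so $\ker(d^{k_n+1})$ consists of the constant tuples, and these coincide with $\mathrm{im}(d^{k_n})$ because the restriction in $d^{k_n}$ is surjective onto $\mM(\Cpn/\Cp{\min(\ell,j_1)})$. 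The main obstacle is the Mackey-decomposition bookkeeping: correctly identifying $\tilde{C}^{k_n+1}$ at every orbit and recognizing the differentials as diagonal-restriction and cyclic-shift maps. Once those identifications are pinned down, the hypothesis on $\mM$ and the transitivity of $\gamma$ combine to finish the argument.
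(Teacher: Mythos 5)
Your proof is correct and follows essentially the same route as the paper: reduce to the bottom of the cellular cochain complex of $S^V$, identify those cells with those of $S^{\lambda_{j_1}}$ for $j_1$ the largest index of a nontrivial irreducible summand, and then argue orbit-by-orbit that $\m{H}^0$ and $\m{H}^1$ vanish. The only cosmetic difference is in the treatment of orbits $\Cpn/\Cp{\ell}$ with $\ell < j_1$: the paper disposes of these by noting $\lambda_{j_1}$ restricts to a trivial representation there (so the complex is the suspended cochain complex of $S^2$ and exactness is automatic), whereas you verify the diagonal/cyclic-shift kernel–image matching directly; your "composition and cancellation" step also makes explicit the reduction the paper abbreviates as "without loss of generality, we may assume $j=m$."
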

\begin{proof}
If $\lambda_{n}$ is a summand, then connectivity implies the
result. The stabilizers of the complement of the origin in
$\lambda_{j}$ grows as $j$ does, and our cell models show that the
bottom skeleton of $S^{V}$ is that of $S^{\lambda_{j}}$ for $j$
maximal amongst those summands that occur in $V$. Without loss of
generality, we may assume $j=m$, and we need only compute
$\m{H}^{\epsilon}(S^{-\lambda_{m}};\mM)$.

Our standard model for $D(S^{\lambda_{m}})=S^{-\lambda_{m}}$ is 
\[
({\Cpn} /{\Cp{m}} )_{+}\wedge S^{-2}\cup ({\Cpn} /{\Cp{m}} )_{+}\wedge e^{-1}
\cup e^{0}.
\]
This gives us the following cochain complex for the cohomology of
$S^{\lambda_{m}}$ with coefficients in any Mackey functor $\mM$:
\[
\xymatrix@R=.5em{
{\m{C}^{\ast}(S^{\lambda_{m}};\mM):} 
    &{\mM}\ar[rr]^-{\underline{\res}_{p^{m}}^{p^{n}}}  
        &   & {\Ind_{{\cp{m}}}^{{\cp{n}}}\Res_{\cp{m}}^{\cp{n}}(\mM)} 
                          \ar[rr]^{1-\gamma} 
                &   &{\Ind_{{\cp{m}}}^{{\cp{n}}}\Res_{\cp{m}}^{\cp{n}}(\mM),}\\
{\text{dim}} 
    & {0} 
        &   &-1 &   & {-2}
}
\]
and the first map is
\begin{enumerate}
\item the composite of the restriction map to ${\Cp{m}} $ and the
diagonal for subgroups of ${\Cpn} $ which contain ${\Cp{m}} $, and
\item the diagonal for subgroups of ${\Cp{m}} $.
\end{enumerate}

For subgroups of ${\Cp{m}} $ and for any $\mM$, the sequence is
obviously exact.  This is really a restatement of the fact that
$\lambda_{m}$ restricts to the trivial representation for ${\Cp{m}}$.

For the remaining subgroups, we observe that the kernel of the map
denoted $(1-\gamma)$ looks like the constant Mackey functor
$\mM({\Cpn} /{\Cp{m}} )$ (the value for subgroups of ${\Cp{m}} $ is of
course {\em{a priori}} different, but those are already
understood). Thus we have an exact sequence for the cohomology of
$S^{\lambda_{m}}$ with coefficients in $\mM$ evaluated on ${\Cpn}
/{\Cp{\ell}} $ for $\ell\geq m$:
\begin{displaymath}
\xymatrix
@R=5mm
@C=10mm
{
{\underline{H}^{0}(S^{\lambda_{m}};\mM)(\Cpn/\Cp{\ell})}
              \ar@{^{(}->}[r]^(.5){}
    &\mM({\Cpn} /{\Cp{\ell}} )\ar[d]^(.5){\res_{p^{m}}^{p^{\ell}}}\\
    &\mM({\Cpn} /{\Cp{m}} )\ar@{->>}[r]^(.5){}
            &{\underline{H}^{1}(S^{\lambda_{m}};\mM)(\Cpn/\Cp{\ell}). }
}
\end{displaymath}
\end{proof}

\begin{cor}\label{cor:Bkcohomology}
If $\lambda_{m}$ is a summand of $V$ for $m\geq k$, then
\[
H^{0}(S^{V};\mB_{k})=H^{1}(S^{V};\mB_{k})=0.
\]
\end{cor}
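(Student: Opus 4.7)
The corollary is a direct specialization of Lemma~\ref{lem:UpperBound}, so the plan is simply to verify that $\mM = \mB_k$ satisfies the hypothesis of that lemma at level $m$, and then invoke the lemma.

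The key observation is to read off the restriction behavior of $\mB_{k} = \mB_{k,0}$ from its definition: we have $\mB_k(\Cpn/\Cp{\ell}) = \Z/p^k$ for all $\ell \geq k$, and every restriction map between groups in this range is the canonical quotient, which is just the identity map $\Z/p^k \to \Z/p^k$. In particular, given $m \geq k$, the restriction $\res_{p^m}^{p^\ell}\colon \mB_k(\Cpn/\Cp{\ell}) \to \mB_k(\Cpn/\Cp{m})$ is an isomorphism for every $\ell \geq m$. So the hypothesis ``$\res_{p^m}^{p^\ell}$ is an isomorphism for $\ell \geq m$'' of Lemma~\ref{lem:UpperBound} is satisfied by $\mM = \mB_k$ at this value of $m$.

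Applying Lemma~\ref{lem:UpperBound} to $V$ with $\lambda_m$ a summand and $\mM = \mB_k$ then immediately yields $\m{H}^0(S^V;\mB_k) = \m{H}^1(S^V;\mB_k) = 0$, and evaluating on $\Cpn/\Cpn$ gives the stated vanishing for the underlying abelian-group-valued cohomology $H^0$ and $H^1$. There is no real obstacle here; the only thing to be careful about is confirming that the hypothesis ``$m \geq k$'' in the corollary is exactly what is needed to make the restriction maps of $\mB_k$ between subgroups above $\Cp{m}$ become identities, which is precisely what the defining formula for $\mB_{k,0}$ provides.
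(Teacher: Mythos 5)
Your proposal is correct and is exactly the intended argument: the paper states the corollary without proof as an immediate specialization of Lemma~\ref{lem:UpperBound}, and your verification that the restriction maps of $\mB_k = \mB_{k,0}$ are identities on the constant $\Z/p^k$ values above level $\Cp{m}$ when $m \geq k$ is precisely the omitted check.
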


\section{The slices of \texorpdfstring{$S^{\infty\lambda}\wedge
H\mZ$}{Sinfinitylambda}} \label{sec-slices}

Let $L=S^{\infty\lambda}\wedge H\mZ$.  We will see that its slice
tower is determined by a sequence of representations. The cofiber
sequences in the previous section, together with a surprisingly simple
induction argument, show that the naturally occurring tower is the
slice tower.

From this section on, let $p>2$.

\subsection{A sequence of representations} The basic argument is that
we will strip away copies of $\lambda$ from $L$, replacing them with
other irreducible representations until we produce a copy of
$2\rho$. Since the slice tower commutes with $\rho$-suspensions, this
will give us an iterative, periodic approach.

The sequence of representations is curiously simple. 

\begin{defn}\label{def-Vm}
For all $j\geq 1$, let 
\[
V_{j}=\bigoplus_{m=1}^{j}\lambda(2m-1),
\]
and let $V_{0}=0$.
\end{defn}

The first thing to observe is that $V_{p^{n}}=2\rho$
(we run through a complete set of coset representatives).  Since
$\lambda(k+p^{n})=\lambda(k)$ as representations of ${\Cpn} $, we
therefore conclude that
\[
V_{j+p^{n}}=2\rho+V_{j}
\]
for all $j\geq p^{n}$.

We also have $V_{(p^{n}\pm 1)/2}=\rho\pm 1 $ and $V_{p^{n}-j}=2\rho
-V_{j}$ for $0<j<p^{n}$.

The following formula for $V_{j}$ may be useful.
\begin{thm}\label{thm-explicit}
{\bf Floor function formula for $V_{j}$.} Let our group be
$C_{p^{n}}$ for $p$ an odd prime.  For $\ell\geq 0$, let ${c_{\ell }=
(p^{\ell }-1)/2}$.  Then
\begin{align*}
V_{j}
 & = \sum_{0\leq \ell <n}\lambda_{\ell }
      \sum_{0\leq t\leq p-1\atop t\neq c_{1}}
      \floor*{\frac{j+p^{\ell +1}-1-c_{\ell }-p^{\ell }t}{p^{\ell +1}}}
+2\floor*{\frac{j+c_{n}}{p^{n}}}.
\end{align*}
\end{thm}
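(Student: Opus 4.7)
The plan is to directly enumerate, for each irreducible representation of $\Cpn$, the number of times it appears as a summand of $V_{j}=\bigoplus_{m=1}^{j}\lambda(2m-1)$. Since $2m-1$ is odd and the subgroups of $\Cpn$ are linearly ordered by powers of $p$, the isomorphism class of $\lambda(2m-1)$ is determined by $v_{p}(2m-1)$: if $v_{p}(2m-1)=\ell<n$ then $\lambda(2m-1)\simeq\lambda_{\ell}$, while if $v_{p}(2m-1)\geq n$ then $\lambda(2m-1)$ is a trivial real $2$-dimensional representation (producing the final term $2\floor*{(j+c_{n})/p^{n}}$).

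The key arithmetic input will be that $p^{\ell}\mid 2m-1$ iff $m\equiv c_{\ell}+1\pmod{p^{\ell}}$, since $2(c_{\ell}+1)-1=2c_{\ell}+1=p^{\ell}$. Writing $m=c_{\ell}+1+kp^{\ell}$ yields $2m-1=p^{\ell}(2k+1)$, so $v_{p}(2m-1)=\ell$ exactly when $k\not\equiv c_{1}\pmod{p}$. Substituting $k=qp+t$ for $0\leq t\leq p-1$, $t\neq c_{1}$, and $q\geq 0$ expresses $m$ modulo $p^{\ell+1}$ as
\[
m = c_{\ell}+1 + tp^{\ell} + qp^{\ell+1}.
\]
The multiplicity of $\lambda_{\ell}$ in $V_{j}$ is then the sum over admissible $t$ of the number of non-negative $q$ with $m\leq j$, which is $\floor*{(j-c_{\ell}-1-tp^{\ell})/p^{\ell+1}}+1$. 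Using $\floor*{x}+1=\floor*{x+1}$ this rewrites as $\floor*{(j+p^{\ell+1}-1-c_{\ell}-p^{\ell}t)/p^{\ell+1}}$, which is precisely the inner summand in the theorem. An entirely analogous count for $v_{p}(2m-1)\geq n$, based on $m\equiv c_{n}+1\pmod{p^{n}}$, produces the trailing term $2\floor*{(j+c_{n})/p^{n}}$ after the same floor-shift identity is applied.

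The main obstacle will be the boundary bookkeeping: the naive count $\floor*{\cdot}+1$ must correctly collapse to $0$ when no valid $m\leq j$ exists for a given pair $(\ell,t)$. This will be handled by the observation that $0\leq c_{\ell}+tp^{\ell}<p^{\ell+1}$ for every admissible $t$, which forces the shifted numerator $j+p^{\ell+1}-1-c_{\ell}-p^{\ell}t$ to lie in $[0,p^{\ell+1})$ precisely in the vacuous case, so the floor evaluates to $0$ automatically. Once this corner is verified, the rest of the argument is a routine summation that packages the $\ell$-by-$\ell$ count into the displayed formula.
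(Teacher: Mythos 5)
Your proof is correct, and it takes a genuinely different route from the paper's. The paper argues by induction on $j$: it uses the recursion $V_{j}=V_{j-1}+\lambda_{v_{p}(2j-1)}$ and verifies (illustrated for $p=3$) that each floor in the displayed formula jumps by $1$ exactly when $v_{p}(2j-1)$ equals the corresponding $\ell$. You instead compute the multiplicity of each $\lambda_{\ell}$ in $V_{j}$ directly by solving the congruence $v_{p}(2m-1)=\ell$ for $m\leq j$: the observation $2(c_{\ell}+1)-1=p^{\ell}$ pins down the residue of $m$ modulo $p^{\ell}$, the substitution $m=c_{\ell}+1+tp^{\ell}+qp^{\ell+1}$ with $t\neq c_{1}$ enforces $v_{p}(2m-1)=\ell$ exactly, and the count of admissible $q$ is then a single floor per $t$. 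Your handling of the vacuous case is also sound: since $0\leq c_{\ell}+tp^{\ell}\leq p^{\ell+1}-1$ for admissible $t$, the shifted numerator $j+p^{\ell+1}-1-c_{\ell}-p^{\ell}t$ lies in $[j,\, j+p^{\ell+1}-1]$ and hence in $[0,p^{\ell+1})$ exactly when $j\leq c_{\ell}+tp^{\ell}$, making the floor automatically $0$ there (and similarly $p^{n}-c_{n}-1=c_{n}$ gives the trailing term). The direct enumeration has the advantage of being self-contained and not requiring one to verify an increment property term by term; the paper's inductive check is shorter on the page but leans on the reader to confirm the residue analysis that you make explicit.
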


\proof By definition, $V_{0}=0$ and 
\begin{displaymath}
V_{j}=V_{j-1}+\lambda (2j-1)=V_{j-1}+\lambda_{v_{p} (2j-1)}
\end{displaymath}

We will illustrate with the case $p=3$, for which $c_{1}=1$.  The
$\ell =0$ term in our sum is
\begin{align*}
\lambda_{0}\sum_{0\leq t\leq 2\atop t\neq 1}
      \floor*{\frac{j+3-1-c_{0}-t}{3}}
 & = \lambda_{0}\left(\floor*{\frac{j+2}{3}}+ \floor*{\frac{j}{3}} \right)
\end{align*}

\noindent Increasing $j$ by 1 increases the coefficient of
$\lambda_{0}$ when $j$ is congruent to 1 or 3 mod 3, namely the times
when $v_{3} (2j-1)=0$ and we are adding a copy of $\lambda_{0}$ to
$V_{j-1}$.

Similarly the  $\ell =1$ term 
\begin{align*}
\lambda_{1}\sum_{0\leq t\leq 2\atop t\neq 1}
      \floor*{\frac{j+9-1-c_{1}-3t}{9}}
 & = \lambda_{1}\left(\floor*{\frac{j+7}{9}}+ \floor*{\frac{j+1}{9}} \right)
\end{align*}

\noindent Increasing $j$ by 1 increases the coefficient of
$\lambda_{1}$ when $j$ is congruent to 2 or 8 mod 9, namely the times
when $v_{3} (2j-1)=1$ and we are adding a copy of $\lambda_{1}$ to
$V_{j-1}$.

The same thing happens for larger $\ell $ and larger primes.  

When we get to $\ell =n$, we have $\lambda_{n}=2$ by definition.  It
will be added to $V_{j-1}$ when ever $2j-1$ is divisible by $p^{n}$,
which is equivalent to $j$ being congruent to $-c_{n}$ modulo $p^{n}$.
The final coefficient, $\floor*{(j+c_{n})/p^{n}}$ increases for
precisely such $j$.  \qed\bigskip

\begin{cor}\label{cor-coeffs}
{\bf Coefficients in cases of interest.}

Let $j= (ap-1)/2$ for odd $a>0$, and let $a=2b+1$.  Then the coeffcient
$k_{n-\ell }$ of $\lambda_{\ell }$ in the formula of Theorem
\ref{thm-explicit} is
\begin{displaymath}
k_{n-\ell }=\mycases{
(p-1)b+c_{1}
       &\mbox{for }\ell =0\\
\displaystyle{\sum _{0\leq t\leq p-1\atop t\neq c_{1}}}
         \floor*{\dfrac{b +p^{\ell}-1-c_{\ell-1 }-p^{\ell -1} t}{p^{\ell}}}
       &\mbox{for }0<\ell <n\\
\floor*{\dfrac{bp+c_{1}+c_{n}}{p^{n}}}
       &\mbox{for }\ell =n.
}
\end{displaymath}

\noindent In the case $0<\ell <n$ we have  
\begin{displaymath}
(p-1)\floor*{\frac{b}{p^{\ell}}}\leq k_{n-\ell }
           \leq (p-1)\floor*{\frac{b+p^{\ell }}{p^{\ell}}}.
\end{displaymath}
\end{cor}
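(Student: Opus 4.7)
The plan is to substitute $j=(ap-1)/2 = bp + c_1$ (where $a = 2b+1$) directly into the floor function formula of Theorem \ref{thm-explicit} and simplify each summand. The corollary is essentially an unpacking of that theorem for this particular value of $j$.

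For $\ell = 0$, since $c_0 = 0$, the summand becomes $\lfloor (bp + c_1 + p - 1 - t)/p\rfloor = b + \lfloor (c_1 + p - 1 - t)/p\rfloor$. A direct check shows the inner floor equals $1$ for $0 \leq t \leq c_1 - 1$ and $0$ for $c_1 + 1 \leq t \leq p-1$. Summing over these two ranges, each containing $c_1$ values, gives $c_1(b+1) + c_1 b = (p-1)b + c_1$, as claimed.

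For $0 < \ell < n$ the key step is the arithmetic identity
\[
bp + c_1 + p^{\ell+1} - 1 - c_\ell - p^\ell t = p\bigl(b + p^\ell - 1 - c_{\ell-1} - p^{\ell-1} t\bigr) + (p-1),
\]
which follows from $c_\ell = (p^\ell-1)/2$, $c_{\ell-1} = (p^{\ell-1}-1)/2$, and $c_1 = (p-1)/2$ by a one-line calculation. Writing $X = b + p^\ell - 1 - c_{\ell-1} - p^{\ell-1} t$ in the form $X = qp^\ell + r$ with $0 \leq r < p^\ell$, we get $pX + (p-1) = qp^{\ell+1} + (rp + p - 1)$ with remainder satisfying $0 \leq rp + p - 1 \leq p^{\ell+1} - 1$. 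Hence $\lfloor (pX + p - 1)/p^{\ell+1}\rfloor = q = \lfloor X/p^\ell\rfloor$, which converts the summand of Theorem \ref{thm-explicit} into the form stated in the corollary. The case $\ell = n$ is immediate from $j + c_n = bp + c_1 + c_n$.

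For the inequalities in the middle case, observe that as $t$ varies over $\{0,\ldots,p-1\}$ the offset $p^\ell - 1 - c_{\ell-1} - p^{\ell-1} t$ stays in $[c_{\ell-1},\, p^\ell - 1 - c_{\ell-1}] \subseteq [0, p^\ell)$. Therefore each of the $p-1$ summands lies in $\{\lfloor b/p^\ell\rfloor,\, \lfloor b/p^\ell\rfloor + 1\}$, and summing yields $(p-1)\lfloor b/p^\ell\rfloor \leq k_{n-\ell} \leq (p-1)(\lfloor b/p^\ell\rfloor + 1) = (p-1)\lfloor (b+p^\ell)/p^\ell\rfloor$. There is no conceptual obstacle; the whole argument is bookkeeping, and the only mildly delicate step is verifying the displayed identity above, which hinges on the simple relations among $c_1$, $c_{\ell-1}$, and $c_\ell$.
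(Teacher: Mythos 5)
Your proof is correct and follows essentially the same route as the paper: substitute $j=bp+c_1$ into the formula of Theorem \ref{thm-explicit}, simplify case by case, and bound the offsets to get the inequalities. The only notable difference is presentational — for $\ell=0$ you work with $j=bp+c_1$ directly and count which summands contribute a $1$, whereas the paper clears to a common denominator $2p$ and rewrites the summand as $b+1+\lfloor(p-3-2t)/(2p)\rfloor$; and for $0<\ell<n$ you explicitly verify the floor identity $\lfloor (pX+p-1)/p^{\ell+1}\rfloor=\lfloor X/p^\ell\rfloor$, which the paper leaves implicit after citing $c_1-c_\ell=-pc_{\ell-1}$. Your tighter offset bound $[c_{\ell-1},\,p^\ell-1-c_{\ell-1}]$ in the last step is a modest sharpening of the paper's $0<1+c_{\ell-1}+p^{\ell-1}t<p^\ell$, but both suffice.
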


Note that $k_{0}$ is the coefficient of $\lambda_{n}$, which by
definition is the trivial representation of degree two,

\proof
The coefficient $k_{n}$ of $\lambda_{0}$ for $j= (ap-1)/2$ with $a$ odd is 
\begin{align*}
k_{n}
 & = \sum _{0\leq t\leq p-1\atop t\neq c_{1}}
         \floor*{\frac{j+p-1-t}{p}}   \\
 & = \sum _{0\leq t\leq p-1\atop t\neq c_{1}}
         \floor*{\frac{ap-1+2p-2-2t}{2p}}   \\
 & = \sum _{0\leq t\leq p-1\atop t\neq c_{1}}\left(\floor*{\frac{a-1}{2}}
         +\floor*{\frac{3p-3-2t}{2p}}  \right)  \\
 & = \sum _{0\leq t\leq p-1\atop t\neq c_{1}}\left(b+1
         +\floor*{\frac{p-3-2t}{2p}}  \right)  \\
 & = (p-1) (b+1)-c_{1} = (p-1)b+c_{1}= c_{1}a.
\end{align*}

\noindent That of $\lambda_{\ell}$ for $0<\ell <n$ is 
\begin{align*}
k_{n-\ell }
 & = \sum _{0\leq t\leq p-1\atop t\neq c_{1}}
         \floor*{\frac{(ap-1)/2 +p^{\ell +1}-1-c_{\ell }-p^{\ell }t}
                      {p^{\ell +1}}}   \\
 & = \sum _{0\leq t\leq p-1\atop t\neq c_{1}}
         \floor*{\frac{b p+c_{1}+p^{\ell+1}-1-c_{\ell }-p^{\ell }t}
                       {p^{\ell +1}}}   \\
 & = \sum _{0\leq t\leq p-1\atop t\neq c_{1}}
         \floor*{\frac{b +p^{\ell}-1-c_{\ell-1 }-p^{\ell -1} t}{p^{\ell}}}
\qquad \mbox{since $c_{1}-c_{\ell }=-pc_{\ell -1}$.} 
\end{align*}

\noindent Since 
\begin{displaymath}
0<1+c_{\ell-1 }+p^{\ell -1} t<p^{\ell }\qquad \mbox{for each $t$, } 
\end{displaymath}

\noindent we have 
\begin{displaymath}
(p-1)\floor*{\frac{b}{p^{\ell}}}\leq k_{n-\ell }
           \leq (p-1)\floor*{\frac{b+p^{\ell }}{p^{\ell}}}.
\end{displaymath}

We leave the case of $k_{0}$ to the reader. 
\qed\bigskip 

This will let us simplify several arguments. 

\subsection{Special slices} We are now in a position where we can show
that all of the $RO(G)$-graded suspensions of $H\mB_j$ which arise are
in fact slices. We begin with several simple theorems about
$V_{j}$-fold suspensions.

\begin{thm}\label{thm:Vjgeq2j}
For any $Y$ which is $(-1)$-connected, $S^{V_{j}}\wedge Y$ is slice
greater than or equal to $(2j)$.
\end{thm}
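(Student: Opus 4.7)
The plan is to prove the stronger statement that $S^{V_j} \in \tau_{\geq 2j}$ as a genuine $\Cpn$-spectrum, from which the theorem follows by applying the earlier proposition that smashing a $(-1)$-connected spectrum with an element of $\tau_{\geq 2j}$ stays in $\tau_{\geq 2j}$.

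I would argue by induction on $n$. The base case $n = 0$ is immediate, since $V_j$ becomes a trivial representation of real dimension $2j$, and $S^{2j}$ is $(2j-1)$-connected. For the inductive step, I would first reduce to the range $0 \le j \le p^n$ using the periodicity $V_{j+p^n} = V_j + 2\rho$ together with the fact that $\Sigma^{\rho}$ shifts the slice filtration by $|G| = p^n$. The periodicity itself comes from the identity $\sum_{a \in \Z/p^n}\lambda(a) = 2\rho$, which uses that the odd integers form a complete residue system modulo $p^n$ for $p$ odd.

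The core step is to invoke Proposition~\ref{prop:SliceConnectivity} with the auxiliary representation $W := V_j^G \oplus 2\bar\rho$. The containment $V_j \subseteq W$ and the equality $V_j^G = W^G$ follow because $V_j \subseteq V_{p^n} = 2\rho = 2 \oplus 2\bar\rho$, so the non-trivial isotypic part of $V_j$ already lies in $2\bar\rho$; this choice works uniformly whether $V_j^G = 0$ (when $j \le (p^n-1)/2$, since no $\lambda(2k-1)$ is trivial in that range) or $V_j^G = 2$ (when $(p^n+1)/2 \le j \le p^n$, where the single trivial summand $\lambda(p^n)$ appears). The slice bound $S^W \in \tau_{\geq \dim W}$ follows from writing $S^W = \Sigma^{\dim V_j^G} S^{2\bar\rho}$, and deducing $S^{2\bar\rho} \in \tau_{\geq 2p^n - 2}$ from the $\Sigma^{\rho}$ shift applied to $2\bar\rho = 2\rho - 2$. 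The restriction condition $i_H^* S^{V_j} \in \tau_{\geq 2j}$ for every proper $H = C_{p^m}$ is where the induction closes: since $i_H^*\lambda(2k-1)$ is the representation $\lambda(2k-1)$ viewed as an $H$-representation (the composite of $H\subset\Cpn$ with the $(2k-1)$-power map), $i_H^* V_j$ is exactly the representation $V_j$ constructed by the same formula for $H$, so the inductive hypothesis applies.

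The main obstacle I anticipate is bookkeeping rather than conceptual: carefully tracking the $\Sigma^{\rho}$ shifts and decomposing $V_j$ into its trivial and non-trivial isotypic parts inside $2\rho$. The conceptual crux is the uniform choice of $W = V_j^G \oplus 2\bar\rho$, which sidesteps having to split the argument into cases based on whether or not $V_j$ contains a trivial summand.
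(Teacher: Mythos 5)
Your skeleton matches the paper's: reduce to showing $S^{V_j}\in\tau_{\geq 2j}$, use $2\rho$-periodicity to reduce to $j<p^n$, apply Proposition~\ref{prop:SliceConnectivity} with a suitable auxiliary $W$, and close the induction on $|G|$ using the observation that $i_H^*V_j$ is exactly the $V_j$ constructed for $H$. The gap is in your choice of $W$ when $V_j^G=0$, i.e.\ for $j\leq(p^n-1)/2$.

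There you take $W=2\bar\rho$ and need $S^{2\bar\rho}\in\tau_{\geq 2p^n-2}$, which you claim follows from ``the $\Sigma^\rho$ shift applied to $2\bar\rho=2\rho-2$.'' Unwinding, that is the assertion $S^{-2}\in\tau_{\geq -2}$, which is not among the stated facts and is not true for this filtration: the generators of $\tau_{\geq n}$ in Definition~\ref{def-filt} only allow subtracting one trivial dimension ($\epsilon\in\{0,1\}$), and desuspending by two trivial dimensions is exactly what the slice filtration does not tolerate. The paper's remark that ``smashing reduced regular representation spheres for $G$ shows that results of this form are the best possible'' is an explicit warning that $S^{2\bar\rho}=S^{\bar\rho}\wedge S^{\bar\rho}$ fails to land in $\tau_{\geq 2(p^n-1)}$ --- precisely what your argument assumes. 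Your other case, $V_j^G=2$, is fine: there $W=2\oplus 2\bar\rho=2\rho$, and $S^{2\rho}$ is a slice cell.

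The fix, which is the paper's actual choice, is to take a \emph{smaller} $W$: Proposition~\ref{prop:SliceConnectivity} does not require $\dim W=\dim V$, only $V\subset W$, $V^G=W^G$, and $S^W\in\tau_{\geq\dim W}$. For $j\leq(p^n-1)/2$ take $W_j=\rho-1=\bar\rho$, so $V_j\subseteq\bar\rho$, $V_j^G=W_j^G=0$, and $S^{\bar\rho}$ is itself a slice cell of dimension $p^n-1$; for $j\geq(p^n+1)/2$ take $W_j=3\rho-1=\Sigma^{2\rho}\bar\rho$ (or simply $2\rho$). With that substitution the rest of your argument goes through.
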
 

\begin{proof}
Since $(-1)$-connected spectra are slice non-negative and since
smashing with a slice non-negative spectrum at worst preserves slice
connectivity, it suffices to show that $S^{V_{j}}$ is slice greater
than or equal to $(2j)$. Here we can invoke
Proposition~\ref{prop:SliceConnectivity}. We first observe that the
restriction of $V_{j}$ to any subgroup of $\Cpn$ is the corresponding
$V_{j}$ for that subgroup. Thus we may use induction on the order of
the group in a very simple away. For $0\leq j<p^{n}$ let
\[
W_{j}=\begin{cases} 
\rho-1  & j\leq \dfrac{p^{n}-1}{2} \\
3\rho-1 & j\geq \dfrac{p^{n}+1}{2},
\end{cases}
\]
then $V_{j}\subset W_{j}$ and $V_{j}^{G}=W_{j}^{G}$. By induction, for
all proper subgroups $H$, $i_{H}^{\ast}S^{V_{j}}$ is slice greater
than or equal to $(2j)$, and therefore Proposition~\ref{prop:SliceConnectivity}
implies that desired result.
\end{proof}

\begin{cor}
For any Mackey functor $\mM$, $S^{V_{j}}\wedge H\mM$ is slice greater
than or equal to $(2j)$.
\end{cor}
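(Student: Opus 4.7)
The plan is simply to apply Theorem~\ref{thm:Vjgeq2j} with $Y = H\mM$. All that needs verification is that for any Mackey functor $\mM$, the Eilenberg--MacLane spectrum $H\mM$ is $(-1)$-connected in the equivariant sense, meaning $\underline{\pi}_k H\mM = 0$ for all $k < 0$. This is immediate from the defining property of $H\mM$: its only nonvanishing equivariant homotopy Mackey functor is $\underline{\pi}_0 H\mM = \mM$. In particular, $H\mM$ lies in the localizing subcategory of $(-1)$-connected spectra, which is contained in $\tau_{\geq 0}$ by the first proposition of \S1.3.

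Once this is in place, the conclusion is automatic: Theorem~\ref{thm:Vjgeq2j} yields that $S^{V_j} \wedge H\mM$ is slice greater than or equal to $2j$, with no further argument needed. There is no obstacle here; the substance of the result lies entirely in the preceding theorem and, upstream of that, in the slice-connectivity criterion of Proposition~\ref{prop:SliceConnectivity} together with the inductive comparison $V_j \subset W_j$ used in its proof.
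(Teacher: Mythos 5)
Your proof is correct and is exactly the intended argument: the paper states this as an unproved corollary of Theorem~\ref{thm:Vjgeq2j}, and the only thing to observe is that $H\mM$ is $(-1)$-connected, which holds for any Mackey functor $\mM$ by the definition of an equivariant Eilenberg--Mac~Lane spectrum. No differences from the paper's approach.
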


As is often the case, showing that the desired spectra are slice less
than or equal to $(2j)$ is by direct computation. We will reduce the
computation to Corollary~\ref{cor:Bkcohomology}.

\begin{thm}\label{thm-5.4}
For every $\mM$ in which $\res_{\cp{m}}^{\cp{\ell}}$ is an isomorphism
for\linebreak $m\geq k:=v_{p}(2j+1)$, $S^{V_{j}}\wedge H\mM$ is slice
less than or equal to $(2j)$.  Here $v_{p}(2j+1)$ denotes the number
of powers of $p$ dividing $2j+1$.
\end{thm}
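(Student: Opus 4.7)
The plan is to verify the defining slice criterion directly for $X := S^{V_j} \wedge H\mM$: I need to show that for every subgroup $H = C_{p^\ell} \le G$ and every pair $(r, \epsilon)$ with $\epsilon \in \{0, 1\}$ and $rp^\ell - \epsilon \ge 2j+1$, the group $[G_+ \wedge_H S^{r\rho_H - \epsilon}, X]^G$ vanishes. By Wirthm\"uller adjunction, this identifies with the Bredon cohomology group $\tilde{H}^\epsilon_H(S^U; \mM)$, where $U = r\rho_H - V_j|_H$ is a (possibly virtual) representation of $H$. The main tool will be Lemma~\ref{lem:UpperBound}: whenever $U$ contains $\lambda_m$ as a summand with $m \ge k := v_p(2j+1)$, both $\tilde{H}^0$ and $\tilde{H}^1$ of $S^U$ vanish, the hypothesis on $\mM$ supplying precisely the restriction-isomorphism condition required.

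The key combinatorial input will be a periodic decomposition of $V_j|_H$: since $p$ is odd, the map $s \mapsto (2s-1) \bmod p^\ell$ is a bijection of $\{1, \ldots, p^\ell\}$ onto $\mathbb{Z}/p^\ell$, which yields $V_{p^\ell}|_H = 2\rho_H$. Writing $j = qp^\ell + r''$ with $0 \le r'' < p^\ell$ therefore gives $V_j|_H = 2q\rho_H + V^H_{r''}$, where $V^H_{r''}$ denotes the $H$-representation $\sum_{s=1}^{r''} \lambda(2s-1)$ of real dimension $2r''$. In the boundary range $\ell \le k$, the condition $p^\ell \mid (2j+1)$ forces $r'' = (p^\ell - 1)/2$, so $V^H_{r''} = \rho_H - 1$ by the identity $V_{(p^\ell - 1)/2} = \rho - 1$ applied within $C_{p^\ell}$, and consequently $U = (r - 2q - 1)\rho_H + 1$. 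The slice inequality $rp^\ell - \epsilon \ge (2q+1)p^\ell$ forces $r \ge 2q + 1 + \epsilon$, so $U$ always contains a trivial summand; the desired cohomological vanishings then follow by suspending off that trivial summand into negative cohomological degree, where equivariant cohomology of a space is automatically zero.

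In the generic range $\ell > k$, I would apply the multiplicity bounds of Corollary~\ref{cor-coeffs}, interpreted for $V_{r''}$ inside the group $H$, together with the slice inequality, to confirm that $U = (r - 2q)\rho_H - V^H_{r''}$ has strictly positive $\lambda_m$-coefficient for some $m \in [k, \ell - 1]$; Lemma~\ref{lem:UpperBound} then concludes. The main obstacle will be this generic-range bookkeeping: one must compare the $\lambda_m$-multiplicity in $(r - 2q)\rho_H$, which equals $(r - 2q) \cdot \#\{t : v_p(t) = m, \, 1 \le t \le (p^\ell - 1)/2\}$, with the corresponding multiplicity in $V^H_{r''}$ controlled by Corollary~\ref{cor-coeffs}, and verify that the inequality $rp^\ell - \epsilon \ge 2j+1$ leaves a positive excess for some $m \ge k$.
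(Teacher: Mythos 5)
Your overall framework is sound and does take a genuinely different route from the paper: rather than reducing to the full group via the observation that $i^{\ast}_{C_{p^{k}}}\Sigma^{V_{j}}H\mM\cong\Sigma^{b\rho+\bar\rho}H\mM|_{C_{p^{k}}}$ is already a $2j$-slice (so that by induction on $|G|$ one only checks slice cells over $G$ itself), you verify the slice criterion subgroup-by-subgroup. The decomposition $V_{j}|_{H}=2q\rho_{H}+V^{H}_{r''}$ for $j=qp^{\ell}+r''$ is correct, and your boundary case $\ell\le k$ works, though the claim ``$U$ contains a trivial summand'' is a hair too weak: what you actually need and actually have is $\dim U^{H}=r-2q\ge 1+\epsilon$, so that $S^{U}$ is $\epsilon$-connected and both $\tilde H^{0}$ and $\tilde H^{1}$ vanish for dimension reasons (for $\epsilon=1$, suspending off a single trivial summand still leaves $\tilde H^{0}(S^{U-1})$, which is not automatically zero).

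The genuine gap is the generic range $\ell>k$, which is the crux of the theorem, and there the appeal to Corollary~\ref{cor-coeffs} will not carry the argument. That corollary only gives multiplicity bounds when $r''$ has the special form $(ap-1)/2$ with $a$ odd; this fails entirely when $k=0$ (where $2r''+1$ is prime to $p$). Moreover, even when the bounds apply, you need two things that they do not straightforwardly deliver: (i) that $U=(r-2q)\rho_{H}-V^{H}_{r''}$ is an \emph{actual} (not virtual) representation of $H$, since Lemma~\ref{lem:UpperBound} is only stated for genuine representation spheres; and (ii) that some $\lambda_{m}$ with $m\ge k$ survives with positive multiplicity. The efficient way to close this gap --- which is exactly the observation the paper makes over the full group --- is to note that $\lambda(2j+1)|_{H}=\lambda_{H}(2r''+1\bmod p^{\ell})$ is $JO$-equivalent to $\lambda_{H,k}$ precisely because $\ell>k=v_{p}(2j+1)$, and that this irreducible is a summand of $\rho_{H}-V^{H}_{r''}$ when $r''<(p^{\ell}-1)/2$ (hence of $U$ once $r-2q\ge 1$, forced by the slice inequality) and of $2\rho_{H}-V^{H}_{r''}$ when $r''>(p^{\ell}-1)/2$ (hence of $U$ once $r-2q\ge 2$, again forced). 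This simultaneously shows $U$ is genuine and exhibits the needed $\lambda_{k}$-summand, after which Lemma~\ref{lem:UpperBound} applies with the hypothesis on $\mM$ exactly as you intended. With that replacement your subgroup-by-subgroup proof goes through; without it, the key case is not actually established.
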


\begin{proof}
We need to show that for all triples $(r,H,\epsilon)$ such that
$r|H|-\epsilon > 2j$, we have
\[
[G_{+}\wedge_{H}S^{r\rho_{H}-\epsilon},\Sigma^{V_{j}}H\mM]^{G}
    =[S^{r\rho_{H}-\epsilon},\Sigma^{V_{j}}H\mM]^{H}=0.
\]

We can again appeal to the linear ordering of the subgroups, breaking
them into two cases. First, observe that if $2j+1\equiv 0\mod p^{k}$,
then $j\equiv \tfrac{p^{k}-1}{2} \mod p^{k}.$ This means that
\[
i_{C_{p^{k}}}^{\ast} V_{j}=b\rho+\bar{\rho},
\]
for some $b$. Hence for any Mackey functor $\mM$, 
\[
i_{C_{p^{k}}}^{\ast}\Sigma^{V_{j}}H\mM
      \cong\Sigma^{b\rho+\bar{\rho}} Hi_{C_{p^{k}}}^{\ast}\mM
\]
is a $2j$-slice for $C_{p^{k}}$. This is the essential feature of the
argument, as it shows that the slice upper bound on this spectrum is
determined by the slice upper bound for those subgroups of $G$ which
properly contain $C_{p^{k}}$. The condition that all restriction maps are
injections is the same for all subgroups in this range, so without
loss of generality, we need only show
\[
[S^{r\rho_{G}-\epsilon},\Sigma^{V_{j}}H\mM]^{G}=0.
\]

{\em For the rest of this proof we will denote $\rho_{G}$ by simply
$\rho $.}  It suffices to show this for $j\leq p^{n}-1$, as larger
values of $j$ result in $\rho$-fold suspensions and these commute with
slice dimension. We therefore only have to consider slice cells
$S^{r\rho_{G}-\epsilon}$, where $rp^{n}-\epsilon > 2j$.

There are two cases, depending on whether or not we have passed the
special value of $j$: $\tfrac{p^{n}-1}{2}$.

For $1\leq j\leq \tfrac{p^{n}-1}{2}$, we know that $V_{j}\subset
S^{\bar{\rho}}$, with equality iff $j=\tfrac{p^{n}-1}{2}$. We
therefore must compute
\[
[S^{r\rho-\epsilon},\Sigma^{V_{j}}H\mM]
    =[S^{(r-1)\bar{\rho}+V_{j}^{\perp}+(r-\epsilon)},H\mM],
\]
where $V_{j}^{\perp}$ is the orthogonal complement of $V_{j}$ in
$\bar{\rho}$. If $r-\epsilon>0$, then the connectivity of the domain
exceeds the coconnectivity of the range, and therefore all homotopy
classes are zero. We pause here to note that this includes the
exceptional value of $j$, as here $r=\epsilon=1$ results in
$S^{r\rho-\epsilon}$ being a $2j$-slice cell.

For the remaining cases, we assume that $r=\epsilon=1$ and
$j<\tfrac{p^{n}-1}{2}$. By assumption on $j$, now, the representation
$\lambda(2j+1)\subset V_{j}^{\perp}$. The corresponding sphere is
$JO$-equivalent to the sphere of $\lambda_{v_{p}(2j+1)}$. We are
therefore computing
\[
H^{0}\big(S^{V_{j}^{\perp}};\mM\big),
\] 
and by Lemma~\ref{lem:UpperBound}, this group is zero.

For $\tfrac{p^{n}-1}{2}<j<p^{n}$, we have a similar analysis. Here
$\rho\subset V_{j}\subset 2\rho$. Assume that $rp^{n}-\epsilon >2j$,
as before. In particular, we see that $r\geq 2$. We again consider
\[
[S^{r\rho-\epsilon},\Sigma^{V_{j}}H\mM]
  =[S^{(r-2)\bar{\rho}+V_{j}^{\perp}+(r-2)-\epsilon},H\mM],
\]
where $V_{j}^{\perp}$ is the orthogonal complement of $V_{j}$ in
$2\rho$. If $(r-2)-\epsilon$ is positive, then connectivity finishes
the proof. We need to consider $(r-2)-\epsilon$ being $0$ or $-1$ (the
latter only occurring for $j<p^{n}$), and this reduces the computation
to that of
\[
H^{\epsilon}(S^{V_{j}^{\perp}+(r-2)\bar{\rho}};\mM).
\]
If $r\geq 3$, then all $\lambda_{i}$ are summands of $\bar{\rho}$, and
Lemma~\ref{lem:UpperBound} immediately implies these groups are
zero. If $r=2$, then just as before, we observe that $V_{j}^{\perp}$
contains the representation $\lambda(2j+1)$, and so
Lemma~\ref{lem:UpperBound} again implies that these groups are zero.
 
Finally we observe that since $v_{p}(2p^{n}+1)=0$, the
conditions of the theorem require that all restrictions maps be
injections. This means that $H\mM$ is a zero slice, and hence
\[
\Sigma^{V_{j}}H\mM=\Sigma^{2\rho}H\mM
\]
is a $2p^{n}$-slice.
\end{proof}

Combining these theorems yields a number of slices.

\begin{corollary}\label{cor:BjSlices}
For all $j$, the spectrum $\Sigma^{V_{j}}H\mB_{v_{p}(2j+1)}$ is a
$(2j)$-slice.
\end{corollary}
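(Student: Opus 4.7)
The plan is to combine the two theorems just proved, Theorem~\ref{thm:Vjgeq2j} and Theorem~\ref{thm-5.4}, the first of which pins down the slice connectivity from below and the second from above. Setting $k = v_{p}(2j+1)$, I need to verify that $\mM = \mB_{k}$ satisfies the hypotheses of both results, and then conclude that $\Sigma^{V_{j}}H\mB_{k}$ is simultaneously slice $\geq 2j$ and slice $\leq 2j$, hence a $2j$-slice.

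For the lower bound, the corollary to Theorem~\ref{thm:Vjgeq2j} already states that $S^{V_{j}} \wedge H\mM$ is slice $\geq 2j$ for every Mackey functor $\mM$, with no hypothesis needed; this is because $H\mM$ is $(-1)$-connected. So the lower bound is immediate when $\mM = \mB_{k}$.

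For the upper bound, the only thing to check is the condition that the restriction $\res_{p^{m}}^{p^{\ell}}$ is an isomorphism whenever $m \geq k = v_{p}(2j+1)$. From the explicit description of $\mB_{k} = \mB_{k,0}$ given in Section~\ref{sec-drama}, the value on $C_{p^{n}}/C_{p^{m}}$ is $\Z/p^{k}$ for all $m \geq k$, and the restriction maps between these values are the canonical quotients, which in this range are identity maps on $\Z/p^{k}$. Thus $\mB_{k}$ satisfies the hypothesis of Theorem~\ref{thm-5.4}, and we conclude $\Sigma^{V_{j}}H\mB_{k}$ is slice $\leq 2j$.

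Since both bounds hold, $\Sigma^{V_{j}}H\mB_{v_{p}(2j+1)}$ is a $(2j)$-slice. I do not expect any real obstacle here, since the corollary is essentially a bookkeeping combination of the two theorems; the only potential pitfall is making sure the choice $k = v_{p}(2j+1)$ is precisely the one that makes $\mB_{k}$ land in the class of Mackey functors covered by Theorem~\ref{thm-5.4}, and this is exactly how that theorem was stated.
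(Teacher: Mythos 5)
Your proof is correct and matches the paper's intended argument: the paper states the corollary immediately after the two theorems with the remark ``Combining these theorems yields a number of slices,'' which is exactly the bookkeeping you carried out, including the verification that $\mB_{k}$ with $k=v_{p}(2j+1)$ has identity restrictions at and above level $k$ so that Theorem~\ref{thm-5.4} applies.
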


\begin{corollary}\label{cor:ZSlices}
For all $j$, the spectra $\Sigma^{V_{j}} H\mZ$ and
$\Sigma^{V_{j}}H\mZ(k,\ell)$ for $k\leq v_{p} (2j+1)$ or $\ell \geq
v_{p} (2j+1)$ are $(2j)$-slices.
\end{corollary}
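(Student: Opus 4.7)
The plan is a case analysis verifying both slice bounds separately. The lower bound $\geq 2j$ is uniform: both $H\mZ$ and $H\mZ(k,\ell)$ are $(-1)$-connected Eilenberg--Mac Lane spectra, so Theorem~\ref{thm:Vjgeq2j} applied to $Y=H\mZ$ or $Y=H\mZ(k,\ell)$ gives the result. For the upper bound $\leq 2j$ on $\Sigma^{V_j}H\mZ$, every restriction map of $\mZ$ is the identity, so the hypothesis of Theorem~\ref{thm-5.4} is vacuously satisfied. For $\Sigma^{V_j}H\mZ(k,\ell)$ with $k\leq v_p(2j+1)$, the non-identity restrictions of $\mZ(k,\ell)$ are confined to the interval $s\in[\ell,k)\subseteq[0,v_p(2j+1))$, so $\res_{\cp{m}}^{\cp{m'}}$ is an isomorphism whenever $m\geq v_p(2j+1)$, and Theorem~\ref{thm-5.4} again applies.

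The remaining case, $\mZ(k,\ell)$ with $\ell\geq v_p(2j+1)$, is more delicate because restrictions at levels $s\in[\ell,k)\subseteq[v_p(2j+1),n)$ are not isomorphisms, and Theorem~\ref{thm-5.4} does not apply directly. The plan is to exploit the defining short exact sequence of $\mB_{k-\ell,\ell}$,
\begin{displaymath}
0\to \mZ(k,\ell)\to \mZ\to \mB_{k-\ell,\ell}\to 0,
\end{displaymath}
which, after applying the Eilenberg--Mac Lane functor and smashing with $S^{V_j}$, becomes a fiber sequence
\begin{displaymath}
\Sigma^{V_j}H\mZ(k,\ell)\to \Sigma^{V_j}H\mZ\to \Sigma^{V_j}H\mB_{k-\ell,\ell}.
\end{displaymath}
The middle term is a $(2j)$-slice by the preceding case. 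Since both $\tau_{\geq 2j}$ and $\tau_{\leq 2j}$ are triangulated subcategories, it suffices to show that the right-hand term is also a $(2j)$-slice.

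For $\Sigma^{V_j}H\mB_{k-\ell,\ell}$, the lower bound again follows from Theorem~\ref{thm:Vjgeq2j}. For the upper bound, I adapt the argument of Theorem~\ref{thm-5.4}. The key simplification is that $\mB_{k-\ell,\ell}$ vanishes on every subgroup contained in $\Cp{\ell}$; hence for a slice cell $\Cp{n}_{+}\wedge_{H}S^{r\rho_{H}-\epsilon}$ with $H\subseteq\Cp{\ell}$, the mapping space $[S^{r\rho_{H}-\epsilon},\Sigma^{V_j}H\mB_{k-\ell,\ell}]^H$ is automatically zero. For $H=\Cp{m}$ with $m>\ell\geq v_{p}(2j+1)$, Proposition~\ref{Prop:BredonDoesntSee} reduces the computation to one involving $V_{j}^{\Cp{\ell+1}}$, and the cellular cochain complex computing the relevant Bredon cohomology groups is truncated because $\mB_{k-\ell,\ell}$ vanishes below $\Cp{\ell+1}$. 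A direct inspection of this truncated complex, modelled on the proof of Lemma~\ref{lem:UpperBound}, yields the required vanishing.

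The main obstacle is this last step: producing the analogue of Lemma~\ref{lem:UpperBound} for $\mB_{k-\ell,\ell}$. The literal hypothesis of that lemma fails at the level $s=\ell$, where the restriction $\mB_{k-\ell,\ell}(\Cp{n}/\Cp{\ell+1})\to\mB_{k-\ell,\ell}(\Cp{n}/\Cp{\ell})$ is the surjection $\mathbb{Z}/p^{k-\ell}\to 0$ and so is certainly not an isomorphism. The fix is to use the vanishing $\mB_{k-\ell,\ell}(\Cp{n}/\Cp{s})=0$ for $s\leq\ell$ to discard the bottom of the cochain complex, after which the analysis of Lemma~\ref{lem:UpperBound} applies verbatim to the remaining (strictly positive level) piece.
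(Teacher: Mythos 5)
Your steps for the lower bound and for the upper bound in the regime $k\leq v_p(2j+1)$ correctly mirror the intended combination of Theorem~\ref{thm:Vjgeq2j} and Theorem~\ref{thm-5.4}: for $\mZ$ all restrictions are isomorphisms, and for $\mZ(k,\ell)$ with $k\leq v_p(2j+1)$ the non-identity restrictions sit strictly below level $v_p(2j+1)$, so the hypothesis of Theorem~\ref{thm-5.4} is met. The reduction of the remaining regime to showing $\Sigma^{V_j}H\mB_{k-\ell,\ell}\in\tau_{\leq 2j}$ via the fiber sequence is also a legitimate move, since $\tau_{\leq 2j}$ is closed under fibers.

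The gap is precisely in the claim that $\Sigma^{V_j}H\mB_{k-\ell,\ell}$ is slice $\leq 2j$, and your proposed ``truncate and apply Lemma~\ref{lem:UpperBound} verbatim'' repair points in the wrong direction. In the proof of Lemma~\ref{lem:UpperBound}, $\m{H}^0$ and $\m{H}^1$ are identified with the kernel and cokernel of $\res_{p^m}^{p^n}\colon \mM(G/G)\to\mM(G/\Cp{m})$, where $\lambda_m$ is the \emph{largest} summand of the relevant sphere. When the coefficient is $\mB_{k-\ell,\ell}$, the vanishing on subgroups of $\Cp{\ell}$ does not discard anything from the relevant low-degree part of the cochain complex (those cells have stabilizers $\Cp{m}$ with $m$ \emph{large}), and in fact when $m\leq\ell$ it makes the target of $\res_{p^m}^{p^n}$ zero, so the kernel becomes \emph{all} of $\mM(G/G)$ rather than zero. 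Concretely, take $G=C_9$, $j=5$ (so $v_3(2j+1)=0$, $V_5\cong\rho+1$, $V_5^\perp=\bar\rho$), and $\ell=1$, $k=2$. Then $\mB_{k-\ell,\ell}=\mB_{1,1}$ has $\mB_{1,1}(G/C_3)=0$, so $\Ind_{C_3}^{C_9}\Res_{C_3}^{C_9}\mB_{1,1}=0$, and
\begin{displaymath}
[S^{2\rho},\Sigma^{V_5}H\mB_{1,1}]^{C_9}\cong\m{H}^0(S^{\bar\rho};\mB_{1,1})(G/G)=\ker\bigl(\Z/3\to 0\bigr)=\Z/3\neq 0,
\end{displaymath}
with $S^{2\rho}$ a slice cell of dimension $18>10$. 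So $\Sigma^{V_5}H\mB_{1,1}$ is not slice $\leq 10$, and your step (4) cannot go through.

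Moreover, the difficulty here is not merely a defect of the chosen route: the same sphere already detects a problem with the target statement. With the same data, $V_5^\perp=\bar\rho$ has $\lambda_1$ as its largest summand, and $\res_{C_3}^{C_9}=\times p$ for $\mZ(2,1)$, so Lemma~\ref{lem:UpperBound} gives
\begin{displaymath}
[S^{2\rho-1},\Sigma^{V_5}H\mZ(2,1)]^{C_9}\cong\m{H}^1(S^{\bar\rho};\mZ(2,1))(G/G)=\mathrm{coker}\bigl(\Z\xrightarrow{\times 3}\Z\bigr)=\Z/3\neq 0,
\end{displaymath}
with $S^{2\rho-1}$ a slice cell of dimension $17>10$. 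Theorem~\ref{thm-5.4}'s hypothesis genuinely fails for $\mZ(k,\ell)$ with $\ell\geq v_p(2j+1)<k$ (the restriction $\res_{p^\ell}^{p^{\ell+1}}$ is multiplication by $p$ at a level $\geq v_p(2j+1)$), and the branch of the corollary it was supposed to establish does not follow from the tools in the paper; you should not expect a blind fill-in to succeed there. A safe version of your proof covers $\Sigma^{V_j}H\mZ$ and $\Sigma^{V_j}H\mZ(k,\ell)$ with $k\leq v_p(2j+1)$, which is exactly what Theorem~\ref{thm-5.4} delivers; the disjunct $\ell\geq v_p(2j+1)$ needs a separate justification that neither you nor the paper supplies.
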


The condition on $k$ and $\ell $ above is that $v_{p} (2j+1)\notin
(\ell ,k)$, the open interval from $\ell $ to $k$.

\subsection{The \texorpdfstring{$2\rho$}{2rho}-periodic slice tower}

The results of the previous section actually determine for us the
slice tower. It is easiest to observe this via the $2\rho$-suspensions
that showed up. As a corollary to the previous section, we have a
tower
\[
\xymatrix
@R=5mm
@C=10mm 
{
{\Sigma^{2\rho}L}\ar[r] 
    & {\Sigma^{2\rho-\lambda}L }\ar[r]\ar[d] 
        & {\Sigma^{2\rho-\lambda}H\mB_{0}}\\
{}  & {\vdots} \ar[d] 
        & {} \\
{}  & {\Sigma^{V_{j}}L} \ar[d]\ar[r] 
        & {\Sigma^{V_{j}}H\mB_{v_{p}(2j+1)}} \\
{}  & {\vdots}\ar[d] 
        & {}\\
{}  & {L}\ar[r] 
        & {\Sigma^{0}H\mB_{0}}}.
\]
Recall that $H\mB_{0}$ is contractible, $V_{p^{n}}=2\rho $ and
$V_{p^{n}-1}=2\rho -\lambda $.  All of the layers on the right-hand
side of the tower are slices by Corollary~\ref{cor:BjSlices}. In
particular, they are all simultaneously less than or equal to and
greater than or equal to the dimension of the associated
representation sphere. The final one (the one in the upper right
corner) is then less than or equal to $(2p^{n}-2)$. Now the $2\rho$th
suspension of $L$ is greater than or equal
to $2p^{n}$, and we therefore conclude that all of the cofiber
sequences are exactly the cofiber sequences
\[
P_{n}(X)\to X\to P^{n-1}(X)
\]
for various spectra $X$ of the form $S^{V}\wedge
L$. Splicing them all together, using the
obvious $2\rho$-periodicity of the tower, we see that we have
determined the slice co-tower of $L$. We
group this together in the following theorem.

\begin{thm}\label{thm-slices}
All odd slices and all slices in dimensions not congruent to $-1$
modulo $p$ of $L$ are contractible. The
$(ap^{k}-1)$-slice, where $a$ is odd and prime to $p$, is given by
\[
\Sigma^{V_{j}}H\mB_{k}\qquad \mbox{where }j=(ap^{k}-1)/2 , 
\]
and the maps in the tower are all determined by the cofiber sequences
\[
\Sigma^{V_{j+1}}H\mZ
    \xrightarrow{u_{\lambda}/u_{\lambda(2j+1)}}\Sigma^{V_{j}+\lambda}H\mZ
    \to\Sigma^{V_{j}}H\mB_{v_{p}(2j+1)}.
\]
\end{thm}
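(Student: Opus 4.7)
The plan is to construct the claimed tower by iterating Proposition~\ref{prop:FiberSequence} and then identify it with the slice tower using the slice-dimension bounds already in hand.

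Fix $j \geq 0$ and set $k = v_p(2j+1)$. The relevant instance of Proposition~\ref{prop:FiberSequence} reads
\[
\Sigma^{\lambda_k} H\mZ \xrightarrow{u_\lambda/u_{\lambda_k}} \Sigma^\lambda H\mZ \to H\mB_k.
\]
Smashing with $S^{V_j}$ and invoking the $JO$-equivalence $S^{\lambda(2j+1)} \simeq S^{\lambda_k}$ (so $V_j + \lambda_k$ and $V_{j+1} = V_j + \lambda(2j+1)$ yield equivalent representation spheres) produces the cofiber sequence stated in the theorem,
\[
\Sigma^{V_{j+1}} H\mZ \xrightarrow{u_\lambda/u_{\lambda(2j+1)}} \Sigma^{V_j + \lambda} H\mZ \to \Sigma^{V_j} H\mB_k.
\]
Smashing further with $S^{\infty\lambda}$ gives $\Sigma^{V_{j+1}} L \to \Sigma^{V_j} L \to \Sigma^{V_j} H\mB_k$: the middle term collapses because $L$ is $\lambda$-periodic by construction, and the right-hand term is unchanged because $H\mB_k$ is $\lambda$-periodic (as in the remark preceding Proposition~\ref{prop:FiberSequence}, the vanishing of $\mB_{k,0}$ on the trivial subgroup forces $a_\lambda$ to act as an equivalence on $H\mB_{k,0}$). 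Splicing these cofiber sequences for $j = 0, 1, 2, \ldots$ and using $V_{j+p^n} = V_j + 2\rho$ yields the $2\rho$-periodic tower displayed in the theorem, with the stated connecting maps.

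By Corollary~\ref{cor:BjSlices}, each $\Sigma^{V_j} H\mB_{v_p(2j+1)}$ is a $(2j)$-slice; since $\mB_0 = 0$, this layer is contractible whenever $p \nmid 2j+1$, i.e., when $2j \not\equiv -1 \pmod p$. Thus the potentially nontrivial slices appear only in even dimensions $2j$ with $p \mid 2j+1$, and writing $2j+1 = ap^k$ with $a$ odd and coprime to $p$ produces the $(ap^k-1)$-slice as $\Sigma^{V_j} H\mB_k$. To identify the whole tower with the slice tower, Theorem~\ref{thm:Vjgeq2j} gives $\Sigma^{2\rho} H\mZ \in \tau_{\geq 2p^n}$; writing $S^{\infty\lambda} = \operatorname{colim}_n S^{n\lambda}$ and using that each $S^{n\lambda}$ is $(-1)$-connected together with closure of $\tau_{\geq 2p^n}$ under smash products with $(-1)$-connected spectra and under filtered colimits gives $\Sigma^{2\rho} L \in \tau_{\geq 2p^n}$. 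On the other hand, within one $2\rho$-period the cofibers are slices in dimensions $\leq 2p^n - 2$, so $L/\Sigma^{2\rho} L$ is an iterated extension of such slices and hence slice $\leq 2p^n - 2$. These two bounds force $\Sigma^{2\rho} L \to L \to L/\Sigma^{2\rho} L$ to coincide with the slice truncation $P_{\geq 2p^n} L \to L \to P^{2p^n-1} L$. The same argument applied to each sub-tower within one period, together with the $\rho$-periodicity of the slice tower recalled in Section 1, identifies every cofiber sequence in the tower with a slice truncation.

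The main obstacle is this final identification: one must carry both bounds ``$\Sigma^{V_j} L \in \tau_{\geq 2j}$'' and ``$L/\Sigma^{V_j} L$ is slice $\leq 2j-2$'' simultaneously through every sub-tower. The former is promoted from the $H\mZ$-level bound of Theorem~\ref{thm:Vjgeq2j} via the filtered-colimit argument above, while the latter follows from the standard closure of ``slice $\leq n$'' under extensions; once both are in place, uniqueness of slice truncations completes the proof.
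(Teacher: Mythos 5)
Your proposal is correct and takes essentially the same route as the paper: build the tower by smashing Proposition~\ref{prop:FiberSequence} with $S^{V_j}$ and then with $S^{\infty\lambda}$, use Corollary~\ref{cor:BjSlices} to see each layer is a $(2j)$-slice, and then pin the tower down as the slice tower by combining the lower bound from Theorem~\ref{thm:Vjgeq2j} (with $Y=L$) against the upper bound coming from the fact that ``slice $\leq n$'' is closed under extensions. You spell out two steps that the paper leaves implicit — namely that $\Sigma^{2\rho}L\in\tau_{\geq 2p^n}$ (via $(-1)$-connectivity of $S^{\infty\lambda}\wedge H\mZ$ and closure of $\tau_{\geq n}$ under filtered colimits and smashing with connective spectra) and that $L/\Sigma^{V_j}L$ is slice $\leq 2j-2$ because it is a finite iterated extension of slices in those dimensions — but these elaborations fill in exactly what the paper's phrase ``we therefore conclude'' is compressing, rather than constituting a different argument.
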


We leave it to the interested reader to state a corollary to Theorem
\ref{thm:BredonHomology} as it applies to the $\Sigma^{V_{j}}H\mB_{k}$
here.  In particular it would say that the top dimension for the
homology of the $ap^{k}-1$-slice is $ap^{k-1}-1$.  This means that in
the usual slice spectral sequence chart all nontrivial elements occur
between lines of slopes $p-1$ and $p^{n}-1$ meeting at $(s,t)=
(-1,0)$.

\begin{remark}
Yarnall's thesis shows that the slice sections of $S^{n}\wedge H\mZ$
are all of the form $S^{V}\wedge H\mZ$. This result is of a 
different flavor. Our result here is that the slice
{\emph{connective covers}} $P_{n}L$ are all
representations spheres smashed with $H\mZ$. This is a curious and
confusing fact.
\end{remark}

We will now apply Theorem \ref{thm:BredonHomology} to determine the homotopy
groups of the slices described by Theorem \ref{thm-slices}.

Figure~\ref{fig:SSSC27} shows the spectral sequence for $C_{27}$,
subect to the following regrading convention.  Under the usual
convention, meaning the point $(s,t)$ shows the Mackey functor
$\underline{E}_{2}^{s,t+s}$, all nontrivial elements would lie between
lines of slopes 2 and 26 intersecting at $(s,t)= (0,-1)$, meaning
$(x,y)= (-1,0)$. In order to save space we rescale in such a way that
the vanishing lines have slope 0 and 8 and the horizontal coordinate
is unchanged.  In the general case this means
\[
\begin{bmatrix}
x \\ y
\end{bmatrix} =
\begin{bmatrix}
t-s \\
s-\frac{p-1}{p}(t+1)
\end{bmatrix}.
\]
This change rescales differentials as well, converting $d_{1+2pr}$ for
$r>0$ (the only ones that can occur dues to sparseness) to $d_{1+2r}$.
The figure makes use of Mackey functor symbols indicated in Table
\ref{tab-Mackey-functors}.  The dashed lines are non-trivial extensions
determined by hidden transfers.

{\em The reader may construct similar charts for other cyclic groups
using the information in Corollary \ref{cor-coeffs}.}

\begin{table}[h] \caption{Mackey functors appearing in Figures
\ref{fig:SSSC27} and \ref{fig:S8lambda}.}  \label{tab-Mackey-functors}

\begin{tabular}[]{|c|c|c|c|c|c|c|}
\hline 
$\mB_{1}=\mB_{1,0} $
    &$\mB_{1,1}$
        &$\mB_{1,1}^{*}$
            &$\mB_{1,2}=\mB_{1,2}^{*}$
\\
\hline 
$\bullet$
    &$\underline{\bullet}$
        &$\underline{\bullet}^{*}$
            &$\underline{\underline{\bullet}}
                  =\underline{\underline{\bullet}}^{*}$
\\
\hline 
$
\xymatrix
@R=4mm
@C=5mm
{
\Z/3 \ar@/_1.5pc/[d]_(.5){1}\\
\Z/3 \ar@/_1.5pc/[d]_(.5){1}\ar@/_1.5pc/[u]_(.5){0}\\
\Z/3 \ar@/_1.5pc/[d]_(.5){}\ar@/_1.5pc/[u]_(.5){0}\\
0\ar@/_1.5pc/[u]_(.5){}
} 
$
    &
$
\xymatrix
@R=4mm
@C=5mm
{
\Z/3 \ar@/_1.5pc/[d]_(.5){1}\\
\Z/3 \ar@/_1.5pc/[d]_(.5){}\ar@/_1.5pc/[u]_(.5){0}\\
0    \ar@/_1.5pc/[d]_(.5){}\ar@/_1.5pc/[u]_(.5){}\\
0\ar@/_1.5pc/[u]_(.5){}
} 
$
        &
$
\xymatrix
@R=4mm
@C=5mm
{
\Z/3 \ar@/_1.5pc/[d]_(.5){0}\\
\Z/3 \ar@/_1.5pc/[d]_(.5){}\ar@/_1.5pc/[u]_(.5){1}\\
0    \ar@/_1.5pc/[d]_(.5){}\ar@/_1.5pc/[u]_(.5){}\\
0\ar@/_1.5pc/[u]_(.5){}
} 
$
            &
$
\xymatrix
@R=4mm
@C=5mm
{
\Z/3 \ar@/_1.5pc/[d]_(.5){}\\
0 \ar@/_1.5pc/[d]_(.5){}\ar@/_1.5pc/[u]_(.5){}\\
0    \ar@/_1.5pc/[d]_(.5){}\ar@/_1.5pc/[u]_(.5){}\\
0\ar@/_1.5pc/[u]_(.5){}
} 
$
\\
\hline \hline
$\mB_{2}= \mB_{2,0}$
    &$\mB_{2,1}$
        &$\mB_{3}= \mB_{3,0}$
            &$\mZ $\\
\hline 
$\circ $
    &$\underline{\circ }$
        &$\circledcirc $
            &$\Box$\\
\hline 

$
\xymatrix
@R=4mm
@C=5mm
{
\Z/9 \ar@/_1.5pc/[d]_(.5){1}\\
\Z/9 \ar@/_1.5pc/[d]_(.5){1}\ar@/_1.5pc/[u]_(.5){3}\\
\Z/3 \ar@/_1.5pc/[d]_(.5){}\ar@/_1.5pc/[u]_(.5){3}\\
0\ar@/_1.5pc/[u]_(.5){}
} 
$
    & 
$
\xymatrix
@R=4mm
@C=5mm
{
\Z/9 \ar@/_1.5pc/[d]_(.5){1}\\
\Z/3 \ar@/_1.5pc/[d]_(.5){}\ar@/_1.5pc/[u]_(.5){3}\\
0 \ar@/_1.5pc/[d]_(.5){}\ar@/_1.5pc/[u]_(.5){}\\
0\ar@/_1.5pc/[u]_(.5){}
} 
$
        &
$
\xymatrix
@R=4mm
@C=5mm
{
\Z/27 \ar@/_1.5pc/[d]_(.5){1}\\
\Z/9 \ar@/_1.5pc/[d]_(.5){1}\ar@/_1.5pc/[u]_(.5){3}\\
\Z/3 \ar@/_1.5pc/[d]_(.5){}\ar@/_1.5pc/[u]_(.5){3}\\
0\ar@/_1.5pc/[u]_(.5){}
} 
$
            &
$
\xymatrix
@R=5mm
@C=5mm
{
\Z \ar@/_1.5pc/[d]_(.5){1}\\
\Z \ar@/_1.5pc/[d]_(.5){1}\ar@/_1.5pc/[u]_(.5){3}\\
\Z \ar@/_1.5pc/[d]_(.5){1}\ar@/_1.5pc/[u]_(.5){3}\\
\Z \ar@/_1.5pc/[u]_(.5){3}
} 
$\\
\hline 
\end{tabular}
\end{table}

\bigskip

\begin{figure}[ht] 
\centering
\includegraphics[width=\textwidth]{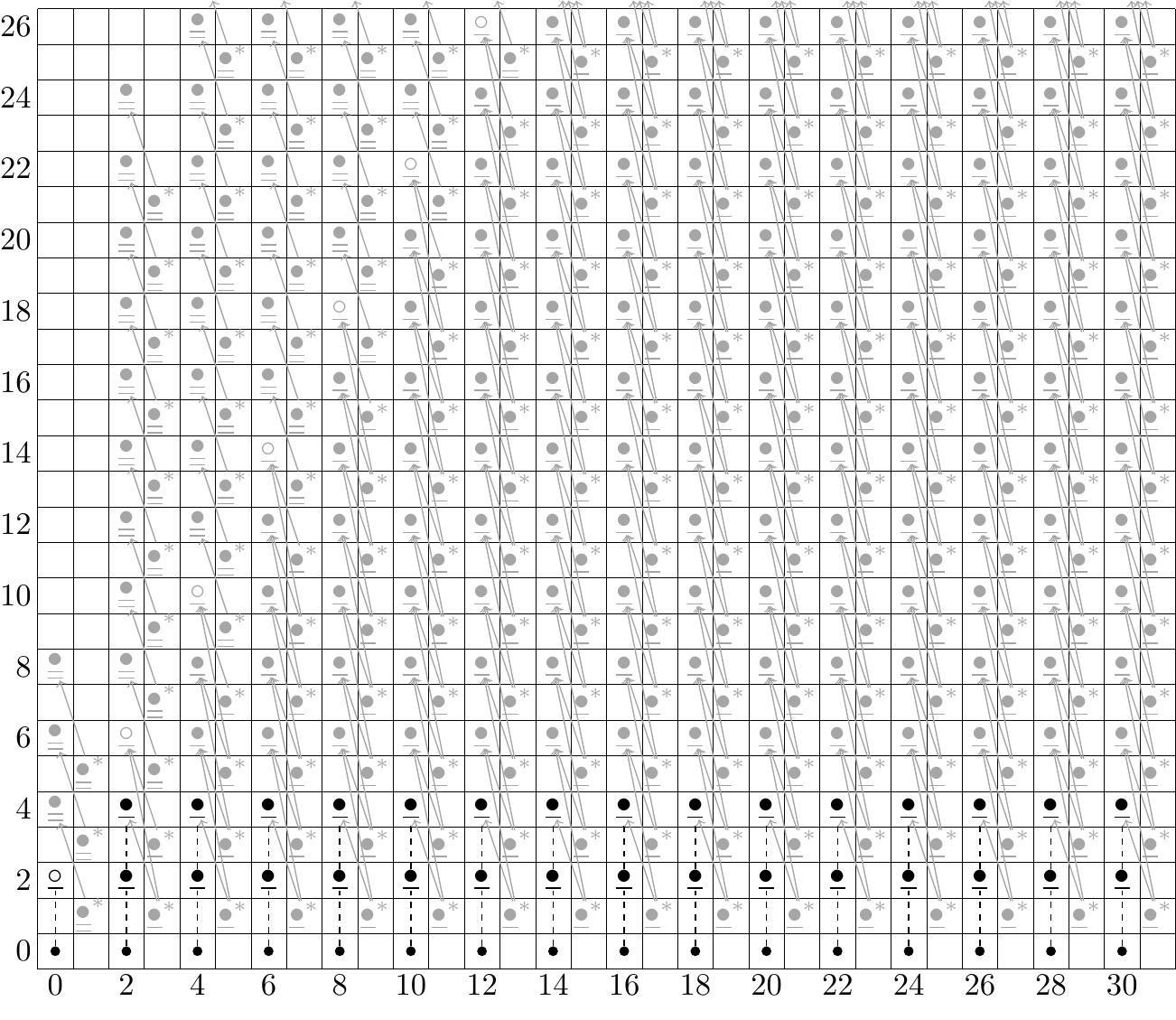} 

\caption{The slice spectral sequence for $S^{\infty\lambda}\wedge
H\mZ$ for $G=C_{27}$.} 
\label{fig:SSSC27}
\end{figure}

\section{The slices of \texorpdfstring{$S^{m\lambda}\wedge
H\mZ$}{finite lambda}}

The slice tower for $L= S^{\infty \lambda}\wedge H\mZ$ almost
completely determines the one for $S^{m\lambda}\wedge
H\mZ$. We first bound the slice tower of $S^{m\lambda}\wedge H\mZ$
from above.

\begin{thm}
For all $m\geq 0$, $S^{m\lambda}\wedge H\mZ$ is less than or equal to $2m$.
\end{thm}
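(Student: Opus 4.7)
The plan is to induct on $m$. For the base case $m=0$, the spectrum $H\mZ$ is the zero slice of the sphere, hence slice $\leq 0$.

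For the inductive step with $m\geq 1$, I apply Proposition~\ref{prop:FiberSequence} with $j=0$ and $k=n$; since $\lambda_{n}=\lambda(p^{n})$ is the trivial two-dimensional representation, this yields the cofiber sequence
\[
\Sigma^{2}H\mZ \longrightarrow \Sigma^{\lambda}H\mZ \longrightarrow H\mB_{n,0}.
\]
Smashing with $S^{(m-1)\lambda}$ produces
\[
\Sigma^{(m-1)\lambda+2}H\mZ \longrightarrow S^{m\lambda}\wedge H\mZ \longrightarrow S^{(m-1)\lambda}\wedge H\mB_{n,0},
\]
which reduces the problem to showing that both outer terms are slice $\leq 2m$.

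For the right-hand term, I apply Proposition~\ref{Prop:BredonDoesntSee} with $W=(m-1)\lambda$ and $j=0$. Since $p>2$ and $\lambda$ restricts faithfully to $C_{p}$, the fixed points $((m-1)\lambda)^{C_{p}}$ vanish, so the Euler class $a_{(m-1)\lambda}\colon S^{0}\to S^{(m-1)\lambda}$ induces an equivalence $S^{(m-1)\lambda}\wedge H\mB_{n,0}\simeq H\mB_{n,0}$. Corollary~\ref{cor:BjSlices} applied at $j=(p^{n}-1)/2$ — where $V_{j}=\rho-1$ and $v_{p}(2j+1)=n$ — exhibits $\Sigma^{\rho-1}H\mB_{n}$ as a $(p^{n}-1)$-slice; combined with the fact that suspension by $\rho$ shifts slice dimension by $|G|=p^{n}$ and ordinary suspension shifts slice dimension by $1$, this implies $H\mB_{n}=\Sigma^{1-\rho}\bigl(\Sigma^{\rho-1}H\mB_{n}\bigr)$ is itself a $0$-slice, hence slice $\leq 0\leq 2m$.

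For the left-hand term, $\Sigma^{(m-1)\lambda+2}H\mZ=\Sigma^{2}\bigl(S^{(m-1)\lambda}\wedge H\mZ\bigr)$, which by the inductive hypothesis is a double suspension of a spectrum that is slice $\leq 2m-2$, hence slice $\leq 2m$. The middle term of a cofiber sequence between slice $\leq 2m$ spectra is itself slice $\leq 2m$, completing the induction.

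The main obstacle is verifying that ordinary suspension shifts the slice filtration cleanly: while $\Sigma\tau_{\geq k}\subseteq \tau_{\geq k+1}$ is immediate from the connectivity criterion, the reverse inclusion — needed so that $\Sigma$ sends slice $\leq k$ to slice $\leq k+1$ — requires showing $G_{+}\wedge_{H}S^{r\rho_{H}-2}\in \tau_{\geq r|H|-2}$ for $|H|\geq 3$. This follows from Proposition~\ref{prop:SliceConnectivity} by a secondary induction on $|H|$, using that the fixed points of $(r-1)\rho_{H}+(|H|-2)$ agree with those of a larger regular representation summand and that the relevant condition for proper subgroups is inductively available.
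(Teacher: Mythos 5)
The proposal has a genuine gap, and in fact the key intermediate claim is false.

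The heart of your argument is that $H\mB_{n}$ is a $0$-slice, deduced from the facts that $\Sigma^{\rho-1}H\mB_{n}$ is a $(p^{n}-1)$-slice and that ``ordinary suspension shifts slice dimension by $1$.'' That latter principle fails, and with it the claim about $H\mB_{n}$. Concretely: $S^{\rho-1}=S^{\bar\rho}$ is a slice cell of dimension $p^{n}-1$, so it lies in $\tau_{\geq p^{n}-1}$, and one computes directly that
\[
[S^{\rho-1},H\mB_{n}]^{G}\cong \underline{H}^{0}(S^{\bar\rho};\mB_{n})(G/G)
\cong\ker\bigl(\mB_{n}(G/G)\to\mB_{n}(G/C_{p^{n-1}})\bigr)\cong\Z/p\neq 0,
\]
since the bottom attaching cell of $S^{\bar\rho}$ above $S^{0}$ is induced from $C_{p^{n-1}}$ and $\res^{p^{n}}_{p^{n-1}}$ in $\mB_{n}$ is the quotient $\Z/p^{n}\to\Z/p^{n-1}$. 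Thus $H\mB_{n}$ is not slice $\leq p^{n}-2$; for $n\geq 2$ it is not even slice $\leq 2$, which is what you would need at the $m=1$ step for the right-hand term $S^{(m-1)\lambda}\wedge H\mB_{n}\simeq H\mB_{n}$ of your cofiber sequence. So the bound fails outright for $G=C_{p^{n}}$ with $n\geq 2$, and the induction does not close.

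The same computation shows why the ``main obstacle'' cannot be patched as you suggest. You need $G_{+}\wedge_{H}S^{r\rho_{H}-2}\in\tau_{\geq r|H|-2}$, but $r\rho_{H}-2$ is a virtual representation (there is only one trivial summand in $\rho_{H}$ to cancel), so Proposition~\ref{prop:SliceConnectivity} does not apply; and for $r=1$, $H=G$, the class $a_{\bar\rho}$ gives $\pi_{-1}^{G}S^{\rho-2}\neq 0$, while the nonvanishing of $[S^{\rho-1},H\mB_{n}]$ above is equivalent to $[S^{\rho-2},\Sigma^{-1}H\mB_{n}]\neq 0$ with $\Sigma^{-1}H\mB_{n}$ a $(-1)$-slice — exhibiting directly that $S^{\rho-2}\notin\tau_{\geq 0}$. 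In short, $\Sigma^{-1}\tau_{\geq k+1}\not\subseteq\tau_{\geq k}$, which is exactly the phenomenon Yarnall's thesis (and this paper) is about. The paper's proof of the theorem sidesteps this entirely: it expands $S^{m\lambda}\wedge S^{-k\rho+\epsilon}$ cell by cell and shows that every cell is in dimension $<0$ (except the bottom one, which contributes no $\underline{\pi}_{0}$), rather than trying to control the slice level of the cofiber $H\mB_{n}$.
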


\begin{proof}
Since $S^{m\lambda}\wedge H\mZ$ restricts to the same spectrum for any
subgroup, by induction on the order of $G$, we know that the
restriction is less than or equal to $2m$. This means that we need
only check that for all $k$ and $\epsilon$ such that $k\cdot
p^{n}-\epsilon>2m$,
\[
[S^{k\rho-\epsilon},S^{m\lambda}\wedge H\mZ]
    =[S^0,S^{m\lambda-k\rho+\epsilon}\wedge H\mZ]=0.
\]

We again can show this using a cell decomposition of $S^{m\lambda}$
and then smashing it with $S^{-k\rho+\epsilon}$. This gives
\begin{multline*}
\big(S^0\cup ( C_{p^n+}\wedge e^1)\cup\dots 
                       \cup ( C_{p^n+}\wedge e^{2m})\big)
          \wedge S^{-k\rho+\epsilon}\wedge H\mZ\\
= 
\big(S^{-k\rho+\epsilon}\cup ( C_{p^n+}\wedge e^{1-kp^n+\epsilon})\cup\dots
           \cup ( C_{p^n+}\wedge e^{2m-kp^n+\epsilon})\big)\wedge H\mZ.
\end{multline*}
The only ways this could have homotopy in dimension $0$ are possibly
from the first term (when $k=\epsilon=1$) or from the final term in the
decomposition. Since $2m-kp^n+\epsilon <0$ by assumption, all cells
except the first are $0$-coconnected. The standard computations show
that the first term also contributes no $\m{\pi}_0$.
\end{proof}

We therefore know we need only determine the slices up to the
$2m$\textsuperscript{th} slice. Our computation for
$L$ actually does most of this. Let $F_{m}$
be the fiber of the natural inclusion $S^{m\lambda}\wedge
H\mZ\hookrightarrow L$.

\begin{thm}
The spectrum $F_{m}$ is greater than or equal to $2m$.
\end{thm}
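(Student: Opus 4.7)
The plan is to exhibit $F_m$ as a sequential colimit of spectra manifestly built from slice cells of dimension at least $2m$. Writing $L$ as the sequential colimit
\[
L \simeq \operatorname*{colim}_k\bigl(S^{m\lambda}\wedge H\mZ \xrightarrow{a_\lambda} S^{(m+1)\lambda}\wedge H\mZ \xrightarrow{a_\lambda}\cdots\bigr),
\]
and using that fibers commute with filtered colimits in the stable setting, I first identify
\[
F_m \simeq \operatorname*{colim}_k \Sigma^{-1}\bigl((S^{(m+k)\lambda}/S^{m\lambda})\wedge H\mZ\bigr).
\]

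Next I apply Proposition~\ref{prop:CellStructure} with $V=(m+k)\lambda$; since $k_0=m+k$ and all other coefficients vanish, $S^{(m+k)\lambda}$ admits an equivariant cell structure consisting of $S^0$ together with free cells $(\Cpn)_+\wedge e^d$ for $d=1,2,\ldots,2(m+k)$, attached in pairs by $1-\gamma$. The subsphere $S^{m\lambda}$ is precisely the $2m$-skeleton, so the cofiber $S^{(m+k)\lambda}/S^{m\lambda}$ is built from the cells $(\Cpn)_+\wedge e^d$ with $d\geq 2m+1$ alone. After the single desuspension, each such cell becomes $(\Cpn)_+\smashove{\{e\}}S^{(d-1)\rho_{\{e\}}}$, which is exactly a slice cell of dimension $d-1\geq 2m$.

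To conclude, I would invoke two permanence properties recorded in Section~1: the subcategory $\tau_{\geq 2m}$ is localizing, hence closed under cofibers and filtered colimits; and smashing with the $(-1)$-connected spectrum $H\mZ$ preserves membership in $\tau_{\geq 2m}$. These together imply that each spectrum $\Sigma^{-1}\bigl((S^{(m+k)\lambda}/S^{m\lambda})\wedge H\mZ\bigr)$ lies in $\tau_{\geq 2m}$, and hence so does the colimit over $k$, giving $F_m\in\tau_{\geq 2m}$.

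The only delicate point I anticipate is the dimension bookkeeping: the single suspension shift from $\Sigma^{-1}$, together with the offset $d\geq 2m+1$ on the cells of $S^{(m+k)\lambda}/S^{m\lambda}$, combine to give exactly the bound slice $\geq 2m$ rather than $\geq 2m\pm 1$. Beyond this, the argument is purely structural, relying only on the cell decomposition of representation spheres from Proposition~\ref{prop:CellStructure} and the permanence properties of the slice filtration already established in the paper.
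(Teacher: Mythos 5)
Your proof is correct, and it takes a genuinely different route from the paper's. The paper's argument is purely computational: it feeds the fiber sequence $F_{m}\to S^{m\lambda}\wedge H\mZ\to L$ into the long exact sequence of homotopy Mackey functors, reads off that $\m{\pi}_{s}(F_{m})=0$ for $s<2m$, and then cites the elementary fact that a $(2m-1)$-connected spectrum lies in $\tau_{\geq 2m}$. Your argument is structural: you never touch the homotopy groups of $F_m$ or $L$, instead exhibiting $F_m$ as a filtered colimit of spectra built from slice cells $(\Cpn)_{+}\wedge S^{d-1}$ with $d-1\geq 2m$, and then invoking closure of the localizing subcategory $\tau_{\geq 2m}$ under cofibers and colimits together with preservation of slice positivity under smashing with the $(-1)$-connected $H\mZ$. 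The one point you lean on without spelling out is that the minimal cell structure of Proposition~\ref{prop:CellStructure} for $S^{(m+k)\lambda}$ has $S^{m\lambda}$ as its $2m$-skeleton via the map $a_{\lambda}^{k}$; this is implicit in how that cell structure is built (adding one $\lambda$ at a time on top of $S^{m\lambda}$), but it is worth flagging as the place where the cellular bookkeeping is doing real work. Your approach has the advantage of generalizing to any suspension $S^{V}\wedge H\mZ\to S^{W}\wedge H\mZ$ along an Euler class whenever the added cells lie in sufficiently high dimension, whereas the paper's argument is tailored to the specific connectivity computation; on the other hand, the paper's proof is shorter once the homotopy groups of $L$ are already known, which they are at that point in the exposition.
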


\begin{proof}
The long exact sequence in homotopy for the fiber sequence 
\[
F_{m}\to S^{m\lambda}\wedge H\mZ\to L
\]
shows that
\[
\m{\pi}_{s}(F_{m})=\begin{cases}
0 & s<2m,\\
\mZ & s=2m,\\
0 & s=2k>2m,\\
\mB(n,0) & s=2k+1>2m.
\end{cases}
\]

Since the spectrum $F_{m}$ is $(2m-1)$-connected, we know that it is
greater than or equal to $2m$.
\end{proof}

\begin{cor}
The natural map $S^{m\lambda}\wedge H\mZ\to S^{\infty\lambda}\wedge
H\mZ$ induces an equivalence
\[
P^{2m-1}\big(S^{m\lambda}\wedge H\mZ\big)
    \to P^{2m-1}\big(L\big).
\]
\end{cor}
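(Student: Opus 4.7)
The plan is to apply the slice truncation functor $P^{2m-1}$ to the defining fiber sequence
\[
F_{m}\to S^{m\lambda}\wedge H\mZ\to L
\]
and show that the first term becomes contractible, so that the second map becomes an equivalence after truncation.

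First, I would recall that the slice tower is a Bousfield localization/nullification: the functor $P^{2m-1}$ is precisely the nullification with respect to the localizing subcategory $\tau_{\geq 2m}$. In particular, $P^{2m-1}$ sends fiber sequences to fiber sequences, and $P^{2m-1}X\simeq\ast$ whenever $X$ is in $\tau_{\geq 2m}$, i.e.\ whenever $X$ is slice $\geq 2m$.

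The previous theorem provides exactly what is needed: $F_{m}$ is slice greater than or equal to $2m$, so $F_{m}\in\tau_{\geq 2m}$ and therefore $P^{2m-1}F_{m}\simeq\ast$. Applying $P^{2m-1}$ to the fiber sequence above then yields the fiber sequence
\[
\ast \to P^{2m-1}\bigl(S^{m\lambda}\wedge H\mZ\bigr)\to P^{2m-1}(L),
\]
and the right-hand map is the one induced by the natural inclusion $S^{m\lambda}\wedge H\mZ\hookrightarrow L$. Since the fiber is contractible, this map is an equivalence, which is exactly the claim.

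There is essentially no obstacle here: the only thing to check is that $P^{2m-1}$ kills $\tau_{\geq 2m}$-objects and preserves cofiber/fiber sequences, both of which are formal consequences of $P^{2m-1}$ being the universal nullification for the slice filtration (recalled at the beginning of the paper and proved in \cite{Hill:Primer,HHR}). The real content of the corollary is the preceding theorem that $F_{m}$ is slice $\geq 2m$; given that, the present corollary is a one-line consequence.
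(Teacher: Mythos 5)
Your conclusion and overall strategy are exactly right, and indeed the paper offers no proof of this corollary because it is an immediate consequence of the preceding theorem that $F_{m}$ is slice $\geq 2m$. However, one step in your justification is not quite correct as stated: the slice truncation $P^{2m-1}$ is a Bousfield localization (nullification of the localizing subcategory $\tau_{\geq 2m}$), and such localizations do \emph{not} in general send fiber sequences to fiber sequences. (Already nonequivariantly, Postnikov truncation $\tau_{\leq 1}$ applied to $S^{0}\xrightarrow{2}S^{0}\to S^{0}/2$ fails to give a cofiber sequence.) What is true, and what you need, is the following: if $f\colon X\to Y$ has fiber $F\in\tau_{\geq n}$, then $P^{n-1}f$ is an equivalence. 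To see this, consider the composite $X\to Y\to P^{n-1}Y$. Its target is slice $\leq n-1$, and its fiber sits in a fiber sequence $F\to\mathrm{fib}(X\to P^{n-1}Y)\to P_{n}Y$; since both $F$ and $P_{n}Y$ lie in $\tau_{\geq n}$ and $\tau_{\geq n}$ is closed under extensions, so does $\mathrm{fib}(X\to P^{n-1}Y)$. Hence $X\to P^{n-1}Y$ satisfies the universal property characterizing $X\to P^{n-1}X$, so $P^{n-1}X\simeq P^{n-1}Y$. With this replacement, your argument is complete and is the intended one.
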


We therefore know all of the slices of $S^{m\lambda}\wedge H\mZ$ below
the $2m$\textsuperscript{th} slice, and we also know that we have a
single remaining slice. Moreover, we know the maps down to the slice
sections, since they are the composites
\[
S^{m\lambda}\wedge H\mZ
      \to L
            \to P^{j}\big(L\big).
\]

We therefore have the slice tower. The fiber of the map
\[
S^{m\lambda}\wedge H\mZ\to P^{2m-1}\big(S^{m\lambda}\wedge H\mZ\big)
\]
is the $(2m)$-slice, since it is simultaneously greater than or equal
to $2m$ and less than or equal to $2m$. The analysis of the slice
tower for $L$ also identifies this with a representation sphere for
us, as determined by the analysis in the previous section.

Considering the actual fiber sequences for the slice cotower for
$L$ shows us what the $(2m)$-slice is for
$S^{m\lambda}\wedge H\mZ$. The determination of the map is immediate
from the consideration of the layers of the tower (and the description
of stripping off copies of $\lambda$ and replacing them with
$\lambda(2j-1)$ for $j$ at most $m$). We simplify the presentation by
taking advantage of the fact that $H\mZ$ is a commutative ring
spectrum.

\begin{thm}
For all $m\geq 0$, the $(2m)$-slice of $S^{m\lambda}\wedge H\mZ$ is
$S^{V_{m}}\wedge H\mZ$.

The map from $S^{V_{m}}\wedge H\mZ$ to $S^{m\lambda}\wedge H\mZ$ is simply
\[
\prod_{j=1}^{m}\frac{u_{\lambda(2j-1)}}{u_{\lambda}}
      =\frac{u_{V_{m}}}{u_{m\lambda}}
\]
\end{thm}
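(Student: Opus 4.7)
The plan is to construct a tower of $H\mZ$-modules interpolating between $\Sigma^{V_m}H\mZ$ and $\Sigma^{m\lambda}H\mZ$ whose associated graded spectra realize the slice filtration below dimension $2m$.

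Set $X_j := \Sigma^{V_j + (m-j)\lambda}H\mZ$ for $j = 0, 1, \ldots, m$, so that $X_0 = \Sigma^{m\lambda}H\mZ$ and $X_m = \Sigma^{V_m}H\mZ$. For each $j$, smashing the fiber sequence of Proposition~\ref{prop:FiberSequence},
\[
\Sigma^{\lambda(2j+1)}H\mZ \to \Sigma^{\lambda}H\mZ \to H\mB_{v_p(2j+1)},
\]
over $H\mZ$ with $\Sigma^{V_j + (m-j-1)\lambda}H\mZ$ produces the cofiber sequence
\[
X_{j+1} \to X_j \to \Sigma^{V_j + (m-j-1)\lambda}H\mB_{v_p(2j+1)},
\]
using that $V_j + \lambda(2j+1) = V_{j+1}$. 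By the multiplicativity $u_{V\oplus W}=u_V\cdot u_W$, the iterated composite $X_m \to X_0$ is precisely the asserted map $\prod_{j=1}^{m}u_{\lambda(2j-1)}/u_{\lambda} = u_{V_m}/u_{m\lambda}$.

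The key computation is to identify each cofiber in this tower with a standard slice. Since $p>2$, the representation $(m-j-1)\lambda$ has trivial $C_p$-fixed points, so $V_j + (m-j-1)\lambda$ and $V_j$ share the same $C_p$-fixed subrepresentation $(V_j)^{C_p}$. Applying Proposition~\ref{Prop:BredonDoesntSee} twice (with $j=0$ in the notation of that proposition, i.e., to $\mB_{v_p(2j+1),0}$) yields
\[
\Sigma^{V_j + (m-j-1)\lambda}H\mB_{v_p(2j+1)}\simeq\Sigma^{(V_j)^{C_p}}H\mB_{v_p(2j+1)}\simeq\Sigma^{V_j}H\mB_{v_p(2j+1)},
\]
which by Corollary~\ref{cor:BjSlices} is the $(2j)$-slice, lying in $\tau_{\leq 2j}\subset \tau_{\leq 2m-2}$.

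To conclude, Corollary~\ref{cor:ZSlices} places $\Sigma^{V_m}H\mZ$ in $\tau_{\geq 2m}$. The cofiber of the composite $X_m\to X_0$ is an iterated extension of the $(2j)$-slices for $j=0,\ldots,m-1$; since $\tau_{\leq 2m-1}$ is closed under extensions, this total cofiber lies in $\tau_{\leq 2m-1}$. The cofiber sequence $\Sigma^{V_m}H\mZ\to\Sigma^{m\lambda}H\mZ\to C$, with source in $\tau_{\geq 2m}$ and cofiber $C$ in $\tau_{\leq 2m-1}$, then exhibits $\Sigma^{V_m}H\mZ$ as $P_{2m}(\Sigma^{m\lambda}H\mZ)$, i.e., as the $(2m)$-slice. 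The main obstacle is the twofold application of Proposition~\ref{Prop:BredonDoesntSee}: without the simplification it provides, the intermediate cofibers $\Sigma^{V_j+(m-j-1)\lambda}H\mB_{v_p(2j+1)}$ do not visibly lie in $\tau_{\leq 2m-2}$, and the slice-dimension estimates of the preceding section cannot be invoked directly.
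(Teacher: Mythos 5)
Your proof is correct, and it takes a genuinely different route from the paper's. The paper compares $S^{m\lambda}\wedge H\mZ$ to $L=S^{\infty\lambda}\wedge H\mZ$: it shows directly that $S^{m\lambda}\wedge H\mZ$ is slice $\leq 2m$, shows the fiber $F_m$ of $S^{m\lambda}\wedge H\mZ\to L$ is $(2m-1)$-connected and hence slice $\geq 2m$, deduces that $P^{2m-1}(S^{m\lambda}\wedge H\mZ)\simeq P^{2m-1}(L)$, and then reads off the $(2m)$-slice from the already-determined tower of $L$. You instead build the relevant truncated tower $X_j=\Sigma^{V_j+(m-j)\lambda}H\mZ$ directly, identify each layer, and invoke the (co)localizing-subcategory characterization of $P_{2m}$; this is more self-contained and has the bonus of exhibiting $P^{2m-1}(S^{m\lambda}\wedge H\mZ)$ explicitly as the total cofiber $C$. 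You correctly flag the one ingredient your route needs that the paper's does not: a twofold application of Proposition~\ref{Prop:BredonDoesntSee} to strip the ambient $(m-j-1)\lambda$ off each layer and match it to a standard slice $\Sigma^{V_j}H\mB_{v_p(2j+1)}$. The paper never confronts this because $L$ is $\lambda$-periodic, so the extra suspensions disappear automatically when working with the infinite suspension. One small thing to make explicit: Corollary~\ref{cor:ZSlices} actually gives that $\Sigma^{V_m}H\mZ$ is a $(2m)$-slice (both $\geq 2m$ and $\leq 2m$), and you need the $\leq 2m$ half as well to promote your identification of $P_{2m}(S^{m\lambda}\wedge H\mZ)$ to the $(2m)$-slice $P^{2m}_{2m}$; you cite only the lower bound. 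Also note that the composite $X_m\to X_0$ is the product of the maps $u_\lambda/u_{\lambda(2j-1)}$ under the paper's convention $v_{j/k}=u_{\lambda_j}/u_{\lambda_k}$, i.e.\ it is $u_{m\lambda}/u_{V_m}$ rather than its reciprocal as written in the theorem statement — but that is a notational slip already present in the statement you were given, not an error introduced by your argument.
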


As a closing remark, we note a fact we found initially quite
curious. If $p$ is large relative to $m$, then $S^{m\lambda}\wedge
H\mZ$ is in fact a $(2m)$-slice. It takes a while before the slice
tower of $S^{m\lambda}\wedge H\mZ$ becomes more complicated.

To illustrate how this plays out, we show in Table~\ref{tab:mversusVm}
the $J$-equivalence classes of $V_{m}$ for $p=3$, $n=2$ and $1\leq m\leq 10$.

\begin{table}[ht]
 
\caption{$V_{j}$ for $C_{27}$ and $1\leq j\leq 27$. Note that
$V_{27-j}=2\rho -V_{j}$.} 
\label{tab:mversusVm}

\begin{tabular}[]{||c|c||c|c||c|c||}
\hline 
$j$ &$V_{j}$
        &$j$&$V_{j}$
                &$j$&$V_{j}$            \\
\hline 
1   &$\lambda $
        &2  &$\lambda +\lambda_{1}$
                &3  &$2\lambda +\lambda_{1}$\\
4   &$3\lambda+\lambda_{1} $
        &5  &$3\lambda +\lambda_{1}+\lambda_{2}$
                &6  &$4\lambda +\lambda_{1}+\lambda_{2}$\\
7   &$5\lambda+\lambda_{1}+\lambda_{2} $
        &8  &$5\lambda +2\lambda_{1}+\lambda_{2}$
                &9  &$6\lambda +2\lambda_{1}+\lambda_{2}$\\
10  &$7\lambda+2\lambda_{1}+\lambda_{2} $
        &11 &$7\lambda +3\lambda_{1}+\lambda_{2}$
                &12 &$8\lambda +3\lambda_{1}+\lambda_{2}$\\
13  &$9\lambda+3\lambda_{1}+\lambda_{2} $
        &14 &$9\lambda +3\lambda_{1}+\lambda_{2}+2$
                &15 &$10\lambda +3\lambda_{1}+\lambda_{2}+2$\\
    &$=\rho -1$   
        &   &$=\rho +1$   
                &   &$=\rho +\lambda +1$\\
16  &$11\lambda+3\lambda_{1}+\lambda_{2}+2 $
        &17 &$11\lambda +4\lambda_{1}+\lambda_{2}+2$
                &18 &$12\lambda +4\lambda_{1}+\lambda_{2}+2$\\
19  &$13\lambda+4\lambda_{1}+\lambda_{2}+2 $
        &20 &$13\lambda +5\lambda_{1}+\lambda_{2}+2$
                &21 &$14\lambda +5\lambda_{1}+\lambda_{2}+2$\\
22  &$15\lambda+5\lambda_{1}+\lambda_{2}+2 $
        &23 &$15\lambda +5\lambda_{1}+2\lambda_{2}+2$
                &24 &$16\lambda +5\lambda_{1}+2\lambda_{2}+2$\\
25  &$17\lambda+5\lambda_{1}+2\lambda_{2}+2 $
        &26 &$17\lambda +6\lambda_{1}+2\lambda_{2}+2$
                &27 &$18\lambda +6\lambda_{1}+2\lambda_{2}+2$\\
    &$=2\rho-\lambda-\lambda_{1}  $
        &   &$=2\rho-\lambda  $
                &   &$=2\rho $\\
\hline 
\end{tabular} 
\bigskip

\end{table}

As an example, we present the slice spectral sequence for
$S^{8\lambda}\wedge H\mZ$ in Figure~\ref{fig:S8lambda}. In this, as
before, a dot represents a copy of $\mB_{1,0}$, a circle indicates
$\mB_{2,0}$, and underlining increases the second subscript. Here we
also have circled circles, denoting $\mB_{3,0}$, and the box
indicates a copy of $\mZ$. Asterisks represent the dual to the named
Mackey functor.

\begin{figure}[t!h]
\includegraphics[width=.5\textwidth]{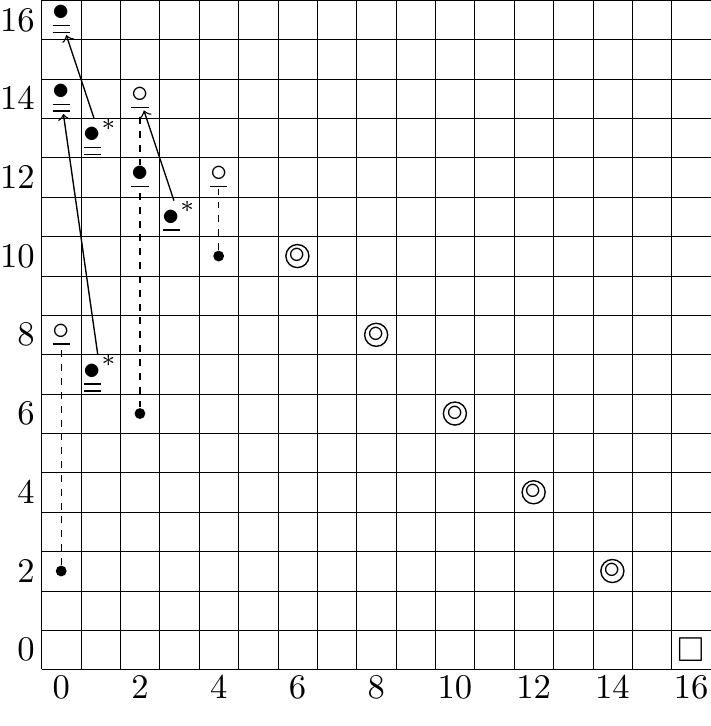}
\caption{The slice spectral sequence for $S^{8\lambda}\wedge H\mZ$ 
and $C_{27}$.}
\label{fig:S8lambda}
\end{figure}

\bibliographystyle{plain}

\bibliography{lambda}

\begin{thebibliography}{1}

\bibitem{Dugger}
Daniel Dugger.
\newblock An {A}tiyah-{H}irzebruch spectral sequence for {$KR$}-theory.
\newblock {\em $K$-Theory}, 35(3-4):213--256 (2006), 2005.

\bibitem{Hill:Primer}
Michael~A. Hill.
\newblock The equivariant slice filtration: a primer.
\newblock {\em Homology Homotopy Appl.}, 14(2):143--166, 2012.

\bibitem{HHR}
Michael~A. Hill, Michael~J. Hopkins, and Douglas~C. Ravenel.
\newblock On the non-existence of elements of {K}ervaire invariant one, 2009.

\bibitem{CDMProof}
Michael~A. Hill, Michael~J. Hopkins, and Douglas~C. Ravenel.
\newblock The {A}rf-{K}ervaire problem in algebraic topology: sketch of the
  proof.
\newblock In {\em Current developments in mathematics, 2010}, pages 1--43. Int.
  Press, Somerville, MA, 2011.

\bibitem{Voe:Open}
Vladimir Voevodsky.
\newblock Open problems in the motivic stable homotopy theory. {I}.
\newblock In {\em Motives, polylogarithms and {H}odge theory, {P}art {I}
  ({I}rvine, {CA}, 1998)}, volume~3 of {\em Int. Press Lect. Ser.}, pages
  3--34. Int. Press, Somerville, MA, 2002.

\bibitem{Yarnall}
Carolyn Yarnall.
\newblock The slices of ${S}^{n}\wedge {H}{\underline{{Z}}}$ for cyclic $p$-groups,
  2015.

\end{thebibliography}

\end{document}